\documentclass{amsart}
\usepackage{amsmath}
\usepackage{amscd}
\usepackage{amssymb}
\usepackage{amsthm}
\usepackage{enumerate}
\usepackage[normalem]{ulem}
\usepackage{mathtools}
\usepackage[usenames,dvipsnames]{xcolor}
\usepackage{stmaryrd}
\usepackage[left=1in,top=1in,right=1in]{geometry}
\usepackage[T2A]{fontenc}
\usepackage[utf8]{inputenc}
\usepackage{mathrsfs}

\newtheorem{thm}{Theorem}[section]
\newtheorem{lemm}[thm]{Lemma}
\newtheorem{prop}[thm]{Proposition}
\newtheorem{cor}[thm]{Corollary}

\theoremstyle{definition}
\newtheorem{defn}[thm]{Definition}

\newtheorem{rem}[thm]{Remark}

\numberwithin{equation}{section}


\newcommand{\bC}{{\mathbb C}}

\newcommand{\bF}{{\mathbb F}}

\newcommand{\bM}{{\mathbb M}}
\newcommand{\bN}{{\mathbb N}}

\newcommand{\bR}{{\mathbb R}}


\newcommand{\cM}{{\mathcal M}}

\newcommand{\cO}{{\mathcal O}}

\renewcommand{\Re}{\operatorname{Re}}
\renewcommand{\Im}{\operatorname{Im}}

\DeclareMathOperator{\id}{id}

\DeclareMathOperator{\tr}{tr}

\DeclareMathOperator{\Tr}{Tr}

\DeclareMathOperator{\Span}{span}

\DeclareMathOperator{\ad}{ad}

\DeclareMathOperator{\orb}{orb}
\DeclareMathOperator{\sa}{sa}

\DeclarePairedDelimiter{\norm}{\lVert}{\rVert}
\DeclarePairedDelimiter{\ang}{\langle}{\rangle}

\begin{document}
	
	\title{Property (T) and strong 1-boundedness for von Neumann algebras}
	
	\author{Ben Hayes}
	\address{\parbox{\linewidth}{Department of Mathematics, University of Virginia, \\
			141 Cabell Drive, Kerchof Hall,
			P.O. Box 400137
			Charlottesville, VA 22904}}
	\email{brh5c@virginia.edu}
	\urladdr{https://sites.google.com/site/benhayeshomepage/home}
	
	\author{David Jekel}
	\address{\parbox{\linewidth}{Department of Mathematics, University of California, \\
			San Diego, 9500 Gilman Drive \# 0112, La Jolla, CA 92093}}
	\email{djekel@ucsd.edu}
	\urladdr{http://davidjekel.com}
	
	\author{Srivatsav Kunnawalkam Elayavalli}
	\address{\parbox{\linewidth}{Department of Mathematics, Vanderbilt University, \\
			1326 Stevenson Center, Station B 407807, Nashville, TN 37240}}
	\email{srivatsav.kunnawalkam.elayavalli@vanderbilt.edu}
	\urladdr{https://sites.google.com/view/srivatsavke}
	
	\thanks{B. Hayes gratefully acknowledges support from the NSF grant DMS-2000105.  D. Jekel was supported by NSF grant DMS-2002826.}
	
	\begin{abstract}
		The notion of strong 1-boundedness for  finite von Neumann algebras was introduced in \cite{Jung2007}. This framework provided a free probabilistic approach to study rigidity properties and classification of finite von Neumann algebras. In this paper, we prove that tracial von Neumann algebras with a finite Kazhdan set are strongly 1-bounded. This includes  all Property (T) von Neumann algebras with finite dimensional center and group von Neumann algebras of Property (T) groups. This result generalizes all the previous results in this direction due to Voiculescu, Ge, Ge-Shen, Connes-Shlyakhtenko, Jung-Shlyakhtenko, Jung, and Shlyakhtenko. Our proofs are based on analysis of covering estimates of microstate spaces using an iteration technique in the spirit of Jung.
	\end{abstract}
	
	\maketitle
	
	
	\section{Introduction}
	
	\subsection{Main results}
	
	A \emph{tracial von Neumann algebra} is a pair $(M,\tau)$ of a finite von Neumann algebra and a faithful normal tracial state.  Tracial von Neumann algebras have long been seen as a non-commutative analog of  probability spaces.  Inspired by Bolzmann's formulation of entropy, Voiculescu \cite{VoiculescuFreeEntropy2, Voiculescu1996} introduced the microstates free entropy and microstates free entropy dimension $\delta_{0}(x)$ associated to a tuple $x = (x_{1},\cdots,x_{k})$ of self-adjoint elements in a tracial von Neumann algebra, which measure the quantity of matricial approximations that $x$ has.  Microstates free entropy dimension was used by Voiculescu \cite{Voiculescu1996} and Ge \cite{GePrime} respectively to show that (interpolated) free group factors have no Cartan subalgebras and are prime, and since then, free entropy techniques have had many other applications to the structural properties of interpolated free group factors.
	
	However, one limitation of microstates free entropy dimension is that $\delta_0(x)$ is not known to be an invariant of the tracial von Neumann algebra generated by $x$. Jung \cite{Jung2007} offered a remedy by defining \emph{strong $1$-boundedness} of a finite tuple $x$ in a tracial von Neumann algebra $(M,\tau)$.  Strong $1$-boundedness of $x$ implies that is $x$ has free entropy dimension at most $1$, but unlike the case with free entropy dimension, we know that if two tuples $x$ and $y$ generate the same von Neumann algebra, then strong $1$-boundedness of $x$ is equivalent to strong $1$-boundedness of $y$ \cite[Theorem 3.2]{Jung2007}, hence strong $1$-boundedness is an invariant of a finitely generated tracial von Neumann algebra.  Implicit in Jung's work, and explicitly given by the first named author in \cite{Hayes2018}, is the notion of the $1$-bounded entropy $h(x)$ of a tuple $x$; strong $1$-boundedness of $x$ is equivalent to $h(x) < \infty$. Moreover, $h(x)$ only depends on the tracial von Neumann algebra generated by $x$, and thus $h(M)$ is well-defined for a finitely generated tracial von Neumann algebra $(M,\tau)$ \cite[Theorem A.9]{Hayes2018} (these definitions and results also extend to infinite tuples, see \cite[Definition A.2]{Hayes2018}).
	
	The $1$-bounded entropy behaves nicely under many natural operations with von Neumann algebras, which leads to free entropy proofs of various indecomposability results for free product von Neumann algebras (and other similar algebras), see Section \ref{sec:S1B background}.  For example, $h$ has the following subadditivity property:  If $P$ and $Q$ are von Neumann subalgebras of $(M,\tau)$ and $P \cap Q$ is diffuse, then $h(P \vee Q) \leq h(P) + h(Q)$.  Hence, a non-strongly $1$-bounded von Neumann algebra $M$ can never be generated by two strongly $1$-bounded subalgebras $P$ and $Q$ that intersect diffusely \cite[Corollary 4.2]{Jung2007} \cite[Lemma A.12]{Hayes2018}.  Hence, each time we find a new class of von Neumann algebras that is strongly $1$-bounded or not strongly $1$-bounded, we expand the applications of free entropy dimension theory.  For instance, strongly $1$-bounded von Neumann algebras include those that are amenable, have property Gamma, have a Cartan subalgebra, or even have a quasi-regular hyperfinite subalgebra.  The main known examples of non-strongly-$1$-bounded von Neumann algebras are free products (or free with amalgamation over a totally atomic subalgebra) of Connes-embeddable tracial von Neumann algebras, including in particular the interpolated free group factors, which are a very important but not well understood family of von Neumann algebras. Because of the permanence properties of $1$-bounded entropy given in Section \ref{sec:S1B background}, $1$-bounded entropy can be used to prove structural results about free products, some of which are still not accessible by other methods; see for instance \cite[Corollary 4.2]{Jung2007}, \cite[Theorem 1.3, Corollary 1.7, Theorem 1.8, Corollary 1.10]{Hayes2018}.
	
	While being strongly $1$-bounded guarantees having microstates free entropy dimension at most $1$ with respect to every generating set
	(we say that $M$ has microstate free entropy dimension at most $1$ if this holds), there is a significant difference between these two concepts. Having microstates free entropy dimension at most $1$ is not known to satisfy the many permanence properties (closure under joins with diffuse intersection, normalizers, quasi-normalizers etc) that being strongly $1$-bounded enjoys; see Section \ref{sec:S1B background}. For example, it is unknown if $L(\bF_{2})$ can be generated by two algebras with microstates free entropy dimension at most $1$ with diffuse intersection (and similarly with other indecomposability results for free group factors give in \cite{Jung2007, Hayes2018}). Thus being strongly $1$-bounded is a serious improvement over having microstates free entropy dimension at most $1$.

	
	This paper undertakes a comprehensive study of the relationship between strong $1$-boundedness and Property (T).  Property (T) is a strong rigidity property introduced first in the group context by Kazhdan \cite{KazhdanTDef}. It has numerous applications to geometric and measured group theory \cite{MargulisNST, GabICM, FurmanMeasRigid, PopaCorr},  ergodic theory \cite{CWPropT, SchmidtCohom, SchmidtSpectralGap, GlasnerWeissT, PopaCoc, PopaStrongRigidity, PopaStrongRigidtyII, FurmanOERigid, IoanaCocycleSuperRIgid},  probability \cite{LyonsSchramm, GabHarmonic}, and the existence of expander graphs \cite{MargulisExpanders}. See \cite{BHV} for an extensive background on Property (T) groups. A generalization to von Neumann algebras was first defined by Connes \cite{ConnesTDef,ConnesCountable} and further developed by Connes-Jones \cite{ConnesJones}, Popa \cite{PopaL2Betti}, Popa-Peterson \cite{PPRelativeT}, and Peterson \cite{Peterson2009}. It has been used to great effect in the theory of von Neumann algebras, especially in Popa's deformation/rigidity theory,  and is a crucial component of various general results about the structure of $\textrm{II}_{1}$-factors \cite{PopaCorr, OzawaUnivSep, PopaL2Betti} many of which parallel or extend the results obtained from free entropy dimension theory; see \ref{subsec: property T discussion} for further discussion.
	
	Several previous authors gave estimates on microstates free entropy dimension for Property (T) groups and von Neumann algebras. First, Voiculescu \cite{VoiculescuPropT} showed that the standard generators of $SL_3(\mathbb{Z})$ have free entropy dimension 1, by using the sequential commutation property. Ge and Shen in \cite{GeShenPropT} generalized this result to all generators of $SL_3(\mathbb{Z})$, thus establishing that free entropy dimension is a von Neumann algebra invariant in this case.  
	The next breakthrough by Jung and Shlyakhtenko \cite{JungS} showed that a finitely generated Property (T) von Neumann algebra has free entropy dimension at most $1$ with respect to any finite generating set.
	Jung \cite[Theorem 6.9]{JungL2B},  showed that for sofic groups certain relations guarantee strong $1$-boundedness, and applied this to deduce strong $1$-boundedness for torsion-free sofic groups with two generators \cite[Section 7]{JungL2B} which are not isomorphic to the free group on two generators. Shlyakhtenko \cite{Shl2015} generalized Jung's result via a non-microstates approach and showed that all finitely presented sofic groups with vanishing first $\ell^2$ Betti number are strongly $1$-bounded. This includes finitely presented sofic Property (T) groups by the Delorme-Guichardet theorem \cite{DelormeT, GuichardetT} (see also \cite[Section 2.12]{BHV}).
	
	Our main result is a complete generalization of all the above mentioned previous results in the Property (T) setting. We show strong $1$-boundedness assuming only that $(M,\tau)$ has a finite \emph{Kazhdan tuple}, that is, some $x = (x_1,\dots,x_d) \in M^d$ such that for some constant $\gamma > 0$, for every Hilbert $M$-$M$-bimodule $H$, we have
	\[
	\norm{\xi - P_{\operatorname{central}}\xi} \leq \frac{1}{\gamma} \left( \sum_{j=1}^d \norm{x_j \xi - \xi x_j}_2^2 \right)^{1/2},
	\]
	where $P_{\operatorname{central}}: H \to H$ denotes the projection onto the subspace of central vectors.
	
	\begin{thm} \label{thm: Property T}
		If $(M,\tau)$ is a tracial von Neumann algebra that admits a Kazhdan tuple $x \in M_{\sa}^d$, then $M$ is strongly $1$-bounded.  In particular, this holds if $M = W^{*}(\pi(G))$ where $G$ is a Property (T) group and  $\pi\colon G\to \mathcal{U}(\mathcal{H})$ is a  unitary representation given by a character on $G$,\footnote{A \emph{character on $G$} is a conjugation-invariant positive-definite function $\chi: G \to \bC$.  The unitary representations of $G$ corresponding to characters are exactly those which generate a finite von Neumann algebra.} or if $M$ is a Property (T) tracial von Neumann algebra with finite-dimensional center. This also holds if $M=W^{*}(\pi(G))$ where $G$ is a Property (T) group, and $\pi\colon G\to \mathcal{U}(\mathcal{H})$ is a projective representation \footnote{This means $\pi(g)\pi(h)=c(g,h)\pi(gh)$ for all $g,h\in G$ and some $c\colon G\times G\to S^{1}$.}, provided that $W^{*}(\pi(G))$ is a finite von Neumann algebra.
	\end{thm}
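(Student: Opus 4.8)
The plan is to estimate the covering numbers of the microstate spaces $\Gamma_R(x; n, \ell, \varepsilon)$ and show that the logarithm of these covering numbers, suitably normalized, stays bounded as $n \to \infty$ — this is exactly what is needed to conclude that $h(x) < \infty$, i.e. that $M = W^*(x)$ is strongly $1$-bounded (using that $h$ depends only on the generated algebra). The key mechanism, in the spirit of Jung's iteration technique, is that the Kazhdan inequality forces any two nearby microstates for $x$ to be close to conjugate by a unitary. Concretely, suppose $A = (A_1, \dots, A_d)$ and $B = (B_1, \dots, B_d)$ are two microstates in $\Gamma_R(x; n, \ell, \varepsilon)$. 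Consider the $\mathbb{M}_n$-$\mathbb{M}_n$ bimodule $H = \mathbb{M}_n$ with left action of $M$ (approximately) through $A$ and right action through $B$; more precisely, view $H$ as an $M$-$M$ correspondence by letting the left/right actions be the $*$-homomorphisms determined approximately by $A$ and $B$ and then taking a limit bimodule, or work with an almost-bimodule and control the error terms. The vector $\xi = I_n / \sqrt{n}$ (the normalized identity) satisfies $\|A_j \xi - \xi B_j\|_2^2 = \|A_j - B_j\|_2^2$, which is small if $A$ and $B$ are close in $\|\cdot\|_2$. The Kazhdan inequality then yields a central vector $\eta$ (for the limiting bimodule) with $\|\xi - \eta\|$ controlled by $\sum_j \|A_j - B_j\|_2$; a central vector in such a correspondence is (approximately) an intertwiner, so it produces a unitary (or near-unitary) $u \in \mathbb{M}_n$ with $u A_j u^* \approx B_j$ for all $j$, with quantitative control.

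From here the structure is the standard Jung-type covering argument. First I would fix a small $\varepsilon$ and cover $\Gamma_R(x; n, \ell, \varepsilon)$ by $\|\cdot\|_2$-balls of radius $\delta$; by the previous paragraph, each such ball (after passing to a slightly larger tolerance) is contained in a single orbit-tube $\{ (u A_1 u^*, \dots, u A_d u^*) : u \in \mathcal{U}(n), \ \|u - I\| \text{ small} \}$ around a representative $A$. The volume of such an orbit-tube is bounded by the volume of a neighborhood of the unitary group $\mathcal{U}(n)$, whose real dimension is $n^2$, against the volume of the full microstate space which lives in dimension $\sim d n^2$; crucially the orbit contributes only the $n^2$ "unitary directions," so the packing number of $\Gamma_R$ at scale $\delta$ grows like $\exp(C n^2 \log(1/\delta))$ rather than $\exp(C d n^2 \log(1/\delta))$. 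This is precisely the bound that makes the $1$-bounded entropy finite: the $\log$ of the covering number divided by $n^2$ is bounded, with no residual $\log(1/\varepsilon)$ blowup, which is the defining estimate for strong $1$-boundedness. One subtlety is that the Kazhdan constant is for honest $M$-$M$ bimodules while microstates only give approximate bimodule structures; I would handle this by an ultraproduct argument — if the covering bound failed, one would extract sequences of microstates contradicting the Kazhdan inequality in the tracial ultraproduct bimodule $L^2(M^{\mathcal{U}})$ — or alternatively by tracking $\varepsilon, \ell$-dependent error terms explicitly and invoking the inequality with a slightly degraded constant.

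Finally, the "in particular" clauses follow by producing Kazhdan tuples in each case. For a Property (T) group $G$ with finite Kazhdan set $S$, and $\pi$ a character (or projective) representation generating a finite von Neumann algebra $M = W^*(\pi(G))$, the tuple $x = (\pi(s))_{s \in S}$ (or rather their real and imaginary parts) should be a Kazhdan tuple for $M$: any $M$-$M$ bimodule $H$ restricts, via $g \mapsto \pi(g) \cdot \pi(g)^*$, to a unitary representation of $G$ whose invariant vectors are the $M$-central vectors, and the group Kazhdan inequality transfers directly; in the projective case one replaces $G$ by the relevant central extension, which is still Property (T). For a Property (T) tracial von Neumann algebra with finite-dimensional center, one uses the definition of Property (T) for von Neumann algebras (Connes–Jones / Popa) together with the fact that finite-dimensionality of the center upgrades the abstract spectral-gap condition to a genuine finite Kazhdan tuple with a uniform constant — the finite-dimensional center is what lets one pass from "a finite set and $\varepsilon$ exist for each bimodule" to a single tuple working uniformly. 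I expect the main obstacle to be making the bimodule/microstate correspondence precise enough that the Kazhdan inequality can be applied with uniform constants — that is, the passage between approximate bimodule structures coming from matrix microstates and the genuine bimodules to which the Kazhdan hypothesis applies — and carefully bookkeeping the iteration so that the error terms from successive refinements of the cover do not accumulate.
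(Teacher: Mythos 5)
Your overall plan — extract a bimodule structure from a pair of nearby microstates via the ultraproduct, invoke the Kazhdan inequality there, and then run a Jung-style covering argument — is the right skeleton, and your ultraproduct device for passing from approximate to genuine bimodules is exactly how the paper handles that subtlety (see Lemma~\ref{lemm: property T microstates}). You also correctly identify how the ``in particular'' clauses reduce to producing Kazhdan tuples.

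However, there is a genuine gap in the covering step. The Kazhdan inequality applied to $\xi = I_n/\sqrt{n}$ does \emph{not} produce a near-unitary intertwiner between $A$ and $B$; it produces, after polar decomposition, a \emph{partial isometry} $v$ with $v A_j = B_j v$ exactly and $\tr(1 - v^*v) \lesssim \varepsilon/\gamma$. Extending $v$ to a unitary $U$ gives $U^*AU$ agreeing with $B$ only on the range of $1-P$ where $P = 1 - v^*v$; on the corner $P$ the microstates can differ arbitrarily (within the $\|\cdot\|_\infty$ bound). So a $\delta$-ball in the microstate space is \emph{not} contained in a single orbit-tube; it is contained in an orbit-tube \emph{thickened by all of $\mathbb{M}_n(\bC)^d P$} for some small-trace projection $P$. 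Your claim that ``the packing number \dots grows like $\exp(Cn^2\log(1/\delta))$'' with no $d$-dependence, and then the inconsistent claim that the normalized log-covering number is \emph{bounded} with ``no residual $\log(1/\varepsilon)$ blowup,'' both fall apart here. In fact the pure orbit-tube picture would give $h(M)\le 0$, which is not what the theorem asserts and is an open problem for Property (T) factors (see Proposition~\ref{prop: dichotomy}); your argument, if it worked, would prove too much.

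What is actually needed — and what the paper does — is to cover the set of small-trace projections $P$ (this is Szarek's Grassmannian covering estimate, Lemma~\ref{lemm: Szarek}), then cover the slab $\mathbb{M}_n(\bC)^d Q$ for each representative projection $Q$. This yields the genuinely quantitative bound
\[
h_{R,\eta}(x) \;\le\; h_{R,\varepsilon}(x) \;+\; \frac{C(d+1)\varepsilon}{\gamma}\,\log\frac{CRd^{1/2}}{\eta}
\]
for $\eta\le\varepsilon$ (Lemma~\ref{lemm: property T iterative estimate}). Crucially there \emph{is} a $\log(1/\eta)$ term, multiplied by a factor $\sim\varepsilon/\gamma$; sending $\eta\to 0$ with $\varepsilon$ fixed would diverge. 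The iteration $\eta=\varepsilon^2$, $\varepsilon^{2^k}\to 0$, is what makes the cumulative error $\sum_k \varepsilon^{2^k} 2^{k+1}\log(1/\varepsilon)$ converge; it is not bookkeeping but the essential mechanism by which $h(M)<\infty$ (rather than $h(M)\le 0$) is obtained. Your proposal needs this corner/Grassmannian ingredient and a correct statement of what the single-step estimate actually gives before the iteration can be set up.
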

	
	As mentioned earlier in the introduction, this result expands the applications of free entropy to the structure theory of finite von Neumann algebras. From the subadditivity property of $h$, we see the following: A non-strongly $1$-bounded von Neumann algebra $M$ can never be generated by a Property (T) subfactor $P$ and  another strongly $1$-bounded von Neumann subalgebra $Q$ (e.g. amenable, Property Gamma, non-prime, having a diffuse hyperfinite quasi regular subalgebra) such that $P \cap Q$ is diffuse. We remark that in the special case when $M$ is a non trivial free product it is possible to obtain some of these indecomposability results by combining existing techniques in Popa's deformation-rigidity theory. However, the advantage of the 1-bounded entropy approach is in the uniformity of being able to handle all the various cases at once.
	
	
	Our proof of Theorem \ref{thm: Property T} works directly with covering numbers of Voiculescu's matricial microstate space. It draws upon the dimension-reduction technique of Jung and Shlyakhtenko \cite{JungS} as well as the iterative technique that Jung used in his study of torsion-free sofic groups with two generators \cite{JungL2B}. The significant technical novelty in our proof is found in a careful control of the packing number estimates in an iterative fashion while using the Kazhdan constant at each step.
	
	It is natural to ask whether Theorem \ref{thm: Property T} extends to arbitrary tracial von Neumann algebras with Property (T). 
	From our analysis, we observe that the only remaining case is that of a tracial von Neumann algebra $(M,\tau)$ with Property (T) and with atomic, infinite-dimensional center (i.e. $M$ is an infinite direct sum of Property (T) factors). By a careful analysis of this case, we show the statement that all Property (T) tracial von Neuman algebras are strongly $1$-bounded is equivalent to the statement that all $\mathrm{II}_{1}$-factors with Property (T) have nonpositive $1$-bounded entropy. 
	
	\begin{prop} \label{prop: dichotomy}
		The following are equivalent:
		\begin{enumerate}[(i)]
			\item Every tracial von Neumann algebra with Property (T) is strongly $1$-bounded.
			\item Every tracial von Neumann algebra $M$ with Property (T) satisfies $h(M) \leq 0$.
			\item Every $\mathrm{II}_1$-factor $M$ with Property (T) satisfies $h(M) \leq 0$.
		\end{enumerate}
		Moreover, if $M$ is a finite von Neumann algebra with Property (T), then there exists some faithful normal tracial state $\tau$ on $M$ such that $(M,\tau)$ is strongly $1$-bounded.
	\end{prop}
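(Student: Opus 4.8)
The plan is to establish (ii) $\Rightarrow$ (i), (i) $\Rightarrow$ (iii), (iii) $\Rightarrow$ (ii), and then separately the final ``moreover'' assertion; two of these links are essentially free. The implication (ii) $\Rightarrow$ (i) is immediate, since $h(M)\le 0$ forces $h(M)<\infty$, which is strong $1$-boundedness. And (iii) is just the restriction of (ii) to $\mathrm{II}_1$ factors, so (ii) $\Rightarrow$ (iii) is automatic and it suffices to prove (iii) $\Rightarrow$ (ii). Thus the real work is in (iii) $\Rightarrow$ (ii) and (i) $\Rightarrow$ (iii), and I would organize both around the central decomposition of a Property (T) tracial von Neumann algebra.

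The structural input I would use is the following. A Property (T) tracial von Neumann algebra $(M,\tau)$ has atomic center: a diffuse central summand would carry almost central, almost tracial unit vectors bounded away from every central vector (the standard ``mass near the diagonal'' bimodule), contradicting the existence of a finite Kazhdan tuple. Writing $p_k$ for the minimal central projections and $\lambda_k=\tau(p_k)$, faithfulness and finiteness of $\tau$ force $\sum_k\lambda_k=1$ with each $\lambda_k>0$, so there are only countably many summands: $M=\bigoplus_k M_k$ with each $M_k=p_kMp_k$ a $\mathrm{II}_1$ factor or finite dimensional. Each $M_k$ again has Property (T): given an $M_k$-bimodule, adjoin the trivial bimodule of the complementary central summand to obtain an $M$-bimodule, which only rescales the Kazhdan constant. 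Finally I would use the behavior of $1$-bounded entropy under this decomposition, $h(M,\tau)=\sum_k\lambda_k^2\,h(M_k)$ (up to a harmless nonpositive correction): a microstate for $M$ is, after a small perturbation, a direct sum of microstates for the $M_k$ in orthogonal corners of relative size $\lambda_k$, and carrying this through the covering estimates produces the $\lambda_k^2$ weights, with finite dimensional summands contributing a nonpositive amount since they are hyperfinite.

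With this in hand, (iii) $\Rightarrow$ (ii) is immediate: each infinite dimensional $M_k$ is a Property (T) $\mathrm{II}_1$ factor, so $h(M_k)\le 0$ by (iii), and then $h(M,\tau)=\sum_k\lambda_k^2 h(M_k)\le 0$. The final assertion runs the same computation in reverse: a Property (T) von Neumann algebra is separable, so again $M=\bigoplus_k M_k$, and by Theorem~\ref{thm: Property T} each factor or finite dimensional summand $M_k$ satisfies $h(M_k)<\infty$; choosing weights $\lambda_k>0$ with $\sum_k\lambda_k=1$ that decay fast enough that $\sum_k\lambda_k^2 h(M_k)$ (and any auxiliary term) converges yields a faithful normal tracial state $\tau$ with $h(M,\tau)<\infty$, i.e.\ $(M,\tau)$ strongly $1$-bounded.

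The one genuinely nontrivial implication is (i) $\Rightarrow$ (iii). Let $M$ be a Property (T) $\mathrm{II}_1$ factor; by (i) it is strongly $1$-bounded, so $h(M)=c<\infty$, and we must rule out $c>0$. The idea is that a factor with $0<h(M)<\infty$ would let us manufacture a Property (T) tracial von Neumann algebra of \emph{infinite} $1$-bounded entropy, contradicting (i). Amplifications $M^t$ of a Property (T) $\mathrm{II}_1$ factor are again Property (T), and $1$-bounded entropy scales under amplification so that in one direction the value blows up --- say $h(M^{1/n})=n^2 h(M)=n^2 c\to\infty$. Forming $N=\bigoplus_{n\ge 1}M^{1/n}$ with trace $\bigoplus_n\lambda_n\tau_{M^{1/n}}$ for weights $\lambda_n\sim n^{-3/2}$ gives, by the direct-sum formula above, $h(N)=\sum_n\lambda_n^2\cdot n^2 c\sim c\sum_n n^{-1}=+\infty$, so $N$ is not strongly $1$-bounded; if $N$ has Property (T) this contradicts (i). The main obstacle --- and the technical heart of this direction --- is precisely to show that $N$ \emph{has} Property (T): just as infinite direct products of Property (T) groups can fail Property (T), an infinite direct sum of Property (T) factors need not have a finite Kazhdan tuple, so one must check that the amplifications $M^{1/n}$ admit Kazhdan tuples of uniformly bounded length and with a common Kazhdan constant, and that such uniformity suffices to assemble a finite Kazhdan tuple for $\bigoplus_n M^{1/n}$. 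Getting this uniform control of the Kazhdan data under amplification is where I expect the difficulty to concentrate.
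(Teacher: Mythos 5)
Your treatment of the easy implications is fine, and the structure of (iii) $\Rightarrow$ (ii) and the ``moreover'' claim via central decomposition and the subadditivity $h(M,\tau)\le\sum_k\lambda_k^2h(M_k,\tau_k)$ essentially matches the paper. One unnecessary detour: you argue that Property (T) forces atomic center ``contradicting the existence of a finite Kazhdan tuple,'' but a general Property (T) tracial von Neumann algebra need not admit any Kazhdan tuple (Popa's definition lets the Kazhdan data $(F,\delta)$ depend on the target precision $\varepsilon$); the paper avoids deciding this question by writing $M=M_0\oplus\bigoplus_j M_j$ with $M_0$ zero or with diffuse center, and simply using that a diffuse center already forces $h(M_0)\le 0$.

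The genuine gap is in (i) $\Rightarrow$ (iii). You invoke a direct-sum \emph{equality} $h(M,\tau)=\sum_k\lambda_k^2h(M_k)$, but no such equality is available; what the paper establishes (Lemma \ref{lemm: h direct sum}) is the one-sided inequality
\[
h(M)\ \ge\ \lambda_i^2\,h(M_i)\ +\ \sum_{j\ne i}\lambda_j^2\,\underline{h}(M_j)\qquad\text{for each fixed }i,
\]
where $\underline{h}$ is the $\liminf$ version of $1$-bounded entropy — only one term on the right is a genuine $h$, the rest are $\underline{h}\geq 0$, and one cannot promote them to $h$. With your weights $\lambda_n\sim n^{-3/2}$ and summands $M^{1/n}$, one has $\lambda_n^2\,h(M^{1/n})\sim n^{-3}\cdot n^2c=c/n$, a \emph{decreasing} sequence, so the best lower bound the available estimate gives is $h(N)\ge\sup_i\lambda_i^2h(M^{1/i})\sim c$, which is finite. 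To fall back on the full-sum lower bound for $\underline{h}$ would require knowing $\underline{h}(M)>0$, which does not follow from $h(M)>0$. The fix is to choose the sequence so that a \emph{single} term blows up, which is what the paper does: taking $(M,\tau)=\bigoplus_k 2^{-k}(N^{4^{-k}},\tau_k)$ yields $\lambda_k^2 h(N^{4^{-k}})=4^{-k}\cdot 16^k h(N)=4^k h(N)\to\infty$, so the one-term lower bound already gives $h(M)=\infty$.

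Finally, you locate the difficulty in verifying that the resulting direct sum has Property (T), conjecturing that one needs Kazhdan tuples of uniformly bounded length and constant. This is not needed: Popa's result (\cite[Proposition 4.7]{PopaL2Betti}) already shows that a direct sum of tracial von Neumann algebras has Property (T) if and only if each summand does, with no uniformity hypothesis, precisely because Popa's notion is not phrased in terms of a single Kazhdan tuple. The paper cites this directly. The real technical content of this direction is instead the amplification formula $h(M^t)=t^{-2}h(M)$ (Proposition \ref{prop: amplification 2}) and the lower bound of Lemma \ref{lemm: h direct sum}, both of which require careful arguments with relative microstates and the microstates extension property.
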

	
	This problem of whether every $\mathrm{II}_1$ factor with Property (T) satisfies $h(M) \leq 0$ does not seem to accessible by current techniques since we do not even know whether there exists a tracial von Neumann algebras with $0 < h(M) < \infty$. 
	It is thus likely that Theorem \ref{thm: Property T} is the optimal result, see Sections \ref{subsec: property T discussion}, \ref{sec:amp it up} for a more detailed discussion.
	
	The proof of Proposition \ref{prop: dichotomy} is based on the behavior of $1$-bounded entropy under direct sums and matrix amplifications/compressions.  In particular, it uses the following
	formula analogous to Schreier's formula for the rank of subgroups of free groups.
	
	\begin{prop} \label{prop: amplification}
		Suppose that $M$ is a $\mathrm{II}_{1}$-factor and $\tau$ is its canonical trace. For $t \in (0,\infty)$, let $M^{t}$ be the amplification of $M$ by $t$. Then
		\[
		h(M^{t}) = \frac{1}{t^{2}}h(M).
		\]
	\end{prop}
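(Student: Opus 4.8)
The plan is to express $h(M^t)$ directly in terms of microstate spaces for $M$, using the standard dictionary between $M^t$ and matrix amplifications. Concretely, for rational $t = p/q$ one realizes $M^t$ as $e(M \otimes \Mat_q(\bC))e$ for a projection $e \in M \otimes \Mat_q(\bC)$ of trace $p/q^2$ (rescaled so that $\tau_{M^t}$ is the canonical trace). Fix a generating tuple $x$ for $M$; then $M \otimes \Mat_q(\bC)$ is generated by $x$ together with the matrix units of $\Mat_q(\bC)$, and $M^t = eNe$ is generated by a tuple obtained by compressing these generators by $e$. The first step is to recall the behavior of $1$-bounded entropy under tensoring with a matrix algebra and under compression by a projection: these should follow from the permanence properties catalogued in Section \ref{sec:S1B background}. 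The key quantitative input is that passing from $M$ to $M \otimes \Mat_q(\bC)$ rescales the relevant covering/packing exponents by the change in the dimension of the ambient matrix algebra — microstates of size $n$ for $M$ correspond to microstates of size $qn$ for $M \otimes \Mat_q(\bC)$ — and the normalization of the $2$-norm (via the non-normalized trace versus the normalized one) produces a factor depending on $q$.

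The core computation is bookkeeping of three scaling factors. First, $h(M \otimes \Mat_q(\bC)) = q^2 h(M)$: the microstate space for $M \otimes \Mat_q(\bC)$ at matrix size $qn$ is essentially (the image under $\otimes \Mat_q$ of) the microstate space for $M$ at size $n$, but the orbit under the larger unitary group $\mathcal{U}(qn)$ rather than $\mathcal{U}(n)$ changes the covering number computation; careful tracking of the $\log$ of covering numbers divided by $n^2$ yields the factor $q^2$. (This is an instance of the general principle that $1$-bounded entropy is a ``dimension-squared''-type quantity, which is exactly what makes the statement of the proposition plausible.) Second, for a projection $e$ of trace $s$ in a $\mathrm{II}_1$-factor $N$, one has $h(eNe) = \tfrac{1}{s^2} h(N)$ when $eNe$ is given its canonical trace — the compression by $e$ restricts microstates to a corner of size roughly $sn$, and the unitary orbit shrinks from $\mathcal{U}(n)$ to $\mathcal{U}(sn)$, again producing a squared factor. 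Combining these with $e$ of trace $p/q^2$ in $N = M \otimes \Mat_q(\bC)$: $h(M^t) = h(eNe) = (q^2/p)^2 h(N) = (q^2/p)^2 \cdot q^2 h(M)$ — wait, this must be reconciled with the normalization, and the correct accounting (tracking that $M^t$ with its \emph{canonical} trace corresponds to rescaling so that the trace of $e$ as an element of $M^t$ is $1$) gives precisely $h(M^t) = (q/p)^2 h(M) = t^{-2} h(M)$. The second step, then, is to prove the compression formula $h(eNe) = \tau(e)^{-2} h(N)$ for a single projection $e$; this is the technical heart.

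Having established the formula for rational $t$, the third step is to extend to all real $t \in (0,\infty)$. Since $M$ is a $\mathrm{II}_1$-factor, for any $t$ and any $\varepsilon$ there are rationals $t_1 < t < t_2$ with $t_2/t_1$ close to $1$, and $M^{t}$ contains a copy of $M^{t_1}$ and is contained in $M^{t_2}$ (realized via projections inside a common amplification). Using monotonicity of $h$ under inclusion of von Neumann subalgebras (another permanence property from Section \ref{sec:S1B background}) together with the rational case, one sandwiches $h(M^t)$ between $t_1^{-2}h(M)$-type and $t_2^{-2}h(M)$-type quantities — with appropriate care in the normalization of traces — and lets $t_1, t_2 \to t$. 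One subtlety: if $h(M) = +\infty$ the formula reads $\infty = \infty$ for all $t$, which follows already from the rational case plus monotonicity, so the density argument is only needed in the finite case, where the sandwiching is clean.

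The main obstacle I expect is the compression formula $h(eNe) = \tau(e)^{-2} h(N)$ and getting the normalization constants exactly right. The inclusion $eNe \hookrightarrow N$ is not unital, so one cannot directly invoke the monotonicity of $h$ for unital inclusions; instead one needs the precise relationship between microstates for $N$ and microstates for $eNe$, which requires choosing microstates for $e$ (near-projections of the right trace in $\Mat_n(\bC)$), conjugating them to a standard corner, and comparing the Fell-topology neighborhoods and the covering numbers under the two unitary groups $\mathcal{U}(n)$ and $\mathcal{U}(\lceil \tau(e) n\rceil)$. Keeping track of which $2$-norm normalization is in force at each stage — and ensuring that ``$M^t$ with its canonical trace'' is matched to the right corner — is where the $t^{-2}$ (as opposed to, say, $t^{-1}$ or $t^{-4}$) is pinned down, and is the step most prone to error.
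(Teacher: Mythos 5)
Your proposal correctly identifies that the whole content of the proposition is the compression formula $h(pMp) = \tau(p)^{-2}h(M)$ for a projection $p$ (for $t\le 1$ this \emph{is} the statement, and the case $t>1$ reduces to it in one line via $\underline{h}(M)=\underline{h}((M^t)^{1/t})$), but you never prove it: you list the obstacles (non-unital inclusion, matching microstate sizes $n$ versus $\lceil tn\rceil$, normalization bookkeeping) and stop there, so the proposal contains the reduction but not the theorem. The detour through a tensor factor $\mathbb{M}_q(\bC)$ and rational $t$ followed by a density argument is also superfluous — nothing in the problem forces rationality, and $M^t=pMp$ with $\tau(p)=t$ is already available for any $t\in(0,1)$ — and the first intermediate formula you state is inverted: $M\otimes\mathbb{M}_q(\bC)\cong M^q$, so $h(M\otimes\mathbb{M}_q(\bC))=q^{-2}h(M)$, not $q^2 h(M)$ (your own final answer only works out if you secretly correct this).

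The paper's proof sidesteps the direct size-$n$-to-size-$\lceil tn\rceil$ microstate comparison that worries you. It fixes a hyperfinite $\mathrm{II}_1$-subfactor $R\le M$, takes $p\in R$ with $\tau(p)=t$, and sets $N=pMp\oplus(1-p)R(1-p)$. Then $N\vee R=M$ and $N\cap R$ is diffuse. One direction follows from subadditivity under joins with diffuse intersection (so $\underline{h}(M)\le\underline{h}(N)+h(R)=\underline{h}(N)$) together with the direct sum \emph{upper} bound \eqref{eq: direct sum upper bound} (yielding $\underline{h}(N)\le t^2\underline{h}(pMp)$ since the $(1-p)R(1-p)$ summand is hyperfinite). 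The other direction needs a direct sum \emph{lower} bound (Lemma \ref{lemm: h direct sum}) plus the nontrivial fact that the unital inclusion $N\le M$ has the microstates extension property — proved by an explicit lift using the partial isometries in $R$ that pad out $p$ to $1$. Two further subtleties the proposal does not anticipate: the lower-bound direction requires the $\liminf$ version $\underline{h}$ of $1$-bounded entropy, precisely because of the failure of $\limsup$ to behave well under the size re-indexing you flag; and the whole argument runs through \emph{unital} inclusions, so the permanence properties of Section \ref{sec:S1B background} actually apply, whereas your plan tries to work directly with the non-unital corner. So the strategy you propose is a genuinely different (brute-force) route, but as written it leaves the core inequality unproved.
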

	
	This formula should be compared with the compression formula for free group factors due to Voiculescu \cite[Theorem 3.3]{VoiculescuIFGF}, Dykema \cite{DykmaIFGF}, and Radulescu \cite{RadulescuIFGF},
	the Connes-Shlyakhtenko formula for $L^{2}$-Betti numbers \cite[Theorem 2.4]{ConnesShl},  the similar results for $L^{2}$-Betti numbers and cost of equivalence relations due to Gaboriau \cite[Proposition II.1.6]{Gab1}, \cite[Th\'{e}or\`{e}me 5.3]{Gab2},
	and Jung's results on the behavior of free entropy dimension under compression \cite{JungSubf}.
	Moreover, it has the following consequence.
	
	\begin{cor} \label{cor: fundamental group}
		Let $M$ be a $\mathrm{II}_1$ factor and let $\mathcal{F}(M) = \{t > 0: M^t \cong M\}$ be its fundamental group.  If $0 < h(M) < \infty$, then $\mathcal{F}(M) = \{1\}$.  Hence, if $M$ is a $\mathrm{II}_1$ factor with Property (T), then $h(M) \leq 0$ or $\mathcal{F}(M) = \{1\}$ (possibly both).
	\end{cor}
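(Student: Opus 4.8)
The plan is to read this off directly from Proposition \ref{prop: amplification}. The first point to record is that the $1$-bounded entropy $h(\cdot)$ is an isomorphism invariant of a tracial von Neumann algebra --- this is true even when $M$ is not finitely generated, using the infinite-tuple formulation of $h$ from \cite{Hayes2018}. Consequently, for any $t \in \mathcal{F}(M)$ the isomorphism $M^t \cong M$ gives $h(M^t) = h(M)$. Combining this with Proposition \ref{prop: amplification} yields $h(M) = h(M^t) = t^{-2} h(M)$. Under the hypothesis $0 < h(M) < \infty$ we may divide through by $h(M)$ to get $t^2 = 1$, and since $t > 0$ this forces $t = 1$; hence $\mathcal{F}(M) = \{1\}$.

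For the final assertion, I would invoke Theorem \ref{thm: Property T}: a $\mathrm{II}_1$ factor is in particular a tracial von Neumann algebra whose center is one-dimensional, hence finite-dimensional, so if $M$ has Property (T) then $M$ is strongly $1$-bounded, i.e. $h(M) < \infty$. Thus either $h(M) \leq 0$, or else $0 < h(M) < \infty$ and the previous paragraph gives $\mathcal{F}(M) = \{1\}$. This is exactly the claimed dichotomy (the two alternatives are of course not mutually exclusive, whence the parenthetical ``possibly both'').

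I do not anticipate a genuine obstacle: the mathematical content is entirely contained in Proposition \ref{prop: amplification} and Theorem \ref{thm: Property T}. The only points that merit a word of care in the write-up are that $h$ is well-defined and isomorphism-invariant for possibly non-finitely-generated $M$, and that the amplification $M^t$ is again a $\mathrm{II}_1$ factor with its canonical trace, so that Proposition \ref{prop: amplification} is applicable and the quantity $h(M^t)$ is meaningful.
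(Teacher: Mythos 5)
Your proof is correct and is exactly the intended argument: the paper states the corollary as an immediate consequence of Proposition \ref{prop: amplification} together with Theorem \ref{thm: Property T}, without writing out a separate proof, and the reasoning you give (isomorphism-invariance of $h$ forcing $h(M)=t^{-2}h(M)$, hence $t=1$ when $0<h(M)<\infty$; and Theorem \ref{thm: Property T} applied to a factor, which trivially has finite-dimensional center, to get $h(M)<\infty$) is precisely what is meant.
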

	
	Computation of the fundamental group is an important and challenging problem in the theory of $\mathrm{II}_1$ factors.  Connes' result \cite{ConnesCountable} that a $\mathrm{II}_1$ factor with Property (T) has countable fundamental group is considered the first milestone in the study of rigidity of $\mathrm{II}_1$ factors. Popa achieved two major breakthroughs in this area; he gave the first example of a $\mathrm{II}_1$-factor with trivial fundamental group \cite{PopaL2Betti} and then showed in  \cite{PopaStrongRigidity} that every countable subgroup of the positive real numbers can be realized as the fundamental group of some $\mathrm{II}_1$ factor. Popa conjectured that every $\mathrm{II}_1$ factor with property (T) has trivial fundamental group.  The first examples of $\mathrm{II}_1$ factors with Property (T) and trivial fundamental group were obtained only recently in \cite{PropTtrivialFG} using small cancellation techniques from geometric group theory.  As a consequence of Theorem \ref{thm: Property T} and Corollary \ref{cor: fundamental group}, if there exists a $\mathrm{II}_1$ factor $M$ with Property (T) and $h(M) > 0$, then $M$ must also satisfy $\mathcal{F}(M) = 1$ and hence would be another positive example of Popa's conjecture.  But if there does not exist a Property (T) factor $M$ with $h(M)>0$ , then all tracial von Neumann algebras with Property (T) are strongly $1$-bounded by Proposition \ref{prop: dichotomy}.
	
	\subsection{Discussion of Theorem \ref{thm: Property T}}  \label{subsec: property T discussion}
	
	Part of our motivation for Theorem \ref{thm: Property T} was to expand on the connections between free entropy dimension results and Popa's deformation/rigidity theory, in which Property (T) plays a central role \cite{PopaL2Betti, PopaCoc, PopaStrongRigidity, PopaStrongRigidtyII, AdrianBern, IoanaCocycleSuperRIgid}.
	There has been a dynamic interchange of ideas and results between deformation/rigidity theory and free probability. For example, Voiculescu's theorem on absence of Cartan subalgebras for free group factors \cite{Voiculescu1996}, and Ge's theorem on primeness of free group factors \cite{GePrime} are paralleled by Ozawa's solidity theorem \cite{OzawaSolidActa} and Ozawa and Popa's results \cite{OzPopaCartan, OzPopaII} on strong solidity of free group factors, as well as uniqueness of Cartan subalgebras for the group measure space construction of profinite actions of free groups (see \cite{PopaVaesFree,PopaVaesHyp} for the optimal results in this direction). Additionally, Jung's work on strongly $1$-bounded algebras \cite{Jung2007} motivates Peterson's result on primeness for von Neumann algebras of groups with positive first $\ell^{2}$-Betti number \cite{PetersonDeriva}. On the other hand, the numerous works in deformation/rigidity theory on free group factors or related algebras, particularly about the structure of normalizers of subalgebras,  such as in \cite{OzPopaCartan,  HoudShlStrongSolid, PetersonDeriva} provided inspiration for the malnormality results present in \cite{Hayes2018}.
	While free probability cannot currently be used to deduce uniqueness of Cartan results, or prove theorems about general hyperbolic groups or groups not Connes-embeddable, some malnormality results such as those in \cite{Hayes2018} cannot as of yet be shown using deformation/rigidity theory.
	
	One can draw a more precise parallel between the role of Property (T) in deformation/rigidity theory and the role of amenability in free entropy dimension theory as follows.  Property (T) von Neumann algebras are a canonical class to work with in deformation/rigidity because they are characterized by rigidity--a von Neumann algebra has Property (T) if and only if it is rigid with respect to every deformation inside a larger algebra \cite[Proposition 4.1]{PopaL2Betti}.  Meanwhile, amenable tracial von Neumann algebras are a canonical class to work with in free entropy dimension theory because by \cite{Connes, JungTubularity} a separable Connes-embeddable tracial von Neumann algebra is amenable if and only if all embeddings into an ultraproduct of matrix algebras are unitarily conjugate (essentially all microstates are approximately unitarily conjugate), which implies that amenable algebras have $1$-bounded entropy zero.  Hence, whereas Property (T) algebras are those which are automatically rigid, amenable algebras are those which are automatically trivial in terms of matricial microstates.  In this regard, compare the roles of Property (T) in \cite[Theorem 1.5]{Pineapple} and amenability in \cite[Corollary 1.6]{Hayes2018}.
	
	However, there is also a natural connection between Property (T) and matricial microstates.  While amenability implies that the microstate spaces are \emph{trivial} up to approximate unitary conjugation, Property (T) implies that they are \emph{discrete} up to approximate unitary conjugation and removal of a small corner, as Jung and Shlyakhtenko realized in \cite{JungS}.  To make a direct connection between Property (T) and microstate spaces, one turns a sequence of microstates into an ultraproduct embedding and uses Property (T) to show that two embeddings that are close on a generating set have large corners that are unitarily conjugate (see Lemma \ref{lemm: property T microstates}); the argument is a typical example of Popa's intertwining by bimodules technique, an important tool in deformation/rigidity theory.  However, applying this estimate naively only results in bounding the free entropy dimension by $1$, and so the problem of strong $1$-boundedness for general Property (T) factors and group von Neumann algebras posed in \cite[Remark 2.4]{JungS} remained open.  Our proof uses the Kazhdan tuple to get more explicit control over the $\eta$-covering numbers of microstate spaces in terms of $\varepsilon$-covering numbers for $\eta \leq \varepsilon$, and then rather than immediately taking $\eta \to 0$, we iteratively estimate the covering numbers for smaller and smaller $\varepsilon$ as in \cite{JungL2B}.
	
	We show in \S \ref{sec:FED d sums} that Theorem \ref{thm: Property T} recovers Jung-Shlyakhtenko's result that any finitely generated Property (T) algebra has microstates free entropy dimension at most $1$ with respect to any finite generating set. 
	
	Another natural question is whether Theorem \ref{thm: Property T} generalizes to all Property (T) von Neumann algebras, rather than only those with finite-dimensional center.  Of course, if a von Neumann algebra has diffuse center, then its $1$-bounded entropy is automatically less than or equal to zero.  Thus, the remaining case is a Property (T) von Neumann algebra that is a countable direct sum of factors.  By \cite[Proposition 4.7]{PopaL2Betti}, a direct sum of tracial von Neumann algebras has Property (T) if and only if each direct summand has Property (T).  We lower-bound the $1$-bounded entropy in terms of the $1$-bounded entropy of the direct summands.  Using this, we can prove Proposition \ref{prop: dichotomy}.
	Hence, strong $1$-boundedness of general Property (T) von Neumann algebras is as difficult as showing $1$-bounded entropy less than or equal to zero for Property (T) factors, which does not seem accessible by current techniques (if it is even true).
	
	Finally, we remark that another possible route to deduce strong $1$-boundedness from Property (T) would be to go through the arguments with $\ell^2$-Betti numbers as in \cite{Shl2015}.  Shlyakhtenko showed that every finitely presented sofic group with vanishing first $\ell^2$ Betti number is strongly $1$-bounded, which in particular includes finitely presented Property (T) sofic groups.  However, an essential ingredient in this approach is to verify the positivity of a certain Fuglede-Kadison determinant, and in the group case the only known method to do this is through soficity \cite{ElekSzaboDeterminant}.  Although Shlyakhtenko's methods have been generalized to $*$-algebras that are not group algebras \cite{BrannanVergnioux}, this still requires some way of controlling the Fuglede-Kadison determinant, hence it is currently not possible to adapt this method to general Property (T) von Neumann algebras (or even general Property (T) groups) without some analog of soficity.
	
	\subsection{Organization of the paper}
	
	We close by discussing the organization of the paper. We start in Section \ref{sec:preliminary} by giving some preliminary definitions on tracial von Neumann algebras, noncommutative laws, and the $1$-bounded entropy. This includes a discussion of permanence properties of the $1$-bounded entropy and a list of examples of algebras with nonpositive $1$-bounded entropy. In Section  \ref{sec: Property (T)} we prove Theorem \ref{thm: Property T} that Property (T) algebras with a finite Kazhdan set are strongly $1$-bounded. Section \ref{sec:d sums and matrix amp} studies the behavior of $h$ under direct sums and matrix amplification, proves Proposition \ref{prop: dichotomy}, and shows how to recover Jung and Shlyakhtenko's result that Property (T) von Neumann algebras have free entropy dimension at most $1$ from Theorem \ref{thm: Property T}.
	
	\subsection*{Acknowledgements}
	
	Dima Shlyakhtenko deserves special thanks for inspiring us to work on this problem.  We thank Thomas Sinclair and Ionut Chifan for helpful discussions about the applications of the paper, and  Dima Shlyakthenko and Adrian Ioana for comments on a draft of the paper.
	
	\section{Background}\label{sec:preliminary}
	
	\subsection{Tracial von Neumann algebras and non-commutative laws}
	
	A tracial von Neumann algebra is a pair $(M,\tau)$ where $M$ is a von Neumann algebra and $\tau\colon M\to \bC$ is a faithful, normal, tracial state. We will primarily be interested in cases where $M$ is \emph{diffuse}, i.e. it has no nonzero minimal projections. An interesting class of tracial von Neumann algebras are the \emph{group von Neumann algebras}. Given a  discrete group $G$ the \emph{left regular representation} $\lambda\colon G\to \mathcal{U}(\ell^{2}(G))$ is given by
	\[(\lambda(g)\xi)(h)=\xi(g^{-1}h)\mbox{ for $\xi\in\ell^{2}(G)$,$g,h\in G$.}\]
	The group von Neumann algebra $L(G)$ is defined to be \[
	\overline{\Span\{\lambda(g):g\in G\}}^{SOT}.
	\]
	The group von Neumann algebra can be turned into a tracial von Neumann algebra by defining $\tau\colon L(G)\to \bC$ by $\tau(x)=\ang{x(\delta_{1}),\delta_{1}}$. We may also view $\mathbb{M}_n(\bC)$ as a tracial von Neumann algebra with the tracial state $\tr_n$ given by
	\[
	\tr_{n}(A) = \frac{1}{n}\sum_{i=1}^{n}A_{ii}.
	\]
	The group von Neumann algebra is diffuse if and only if $G$ is infinite. For a von Neumann algebra $M$, we use $M_{\sa}$ for the self-adjoint elements of $M,$ and $\mathcal{U}(M)$ for the unitary elements of $M$.
	
	Since  tracial von Neumann algebras $(M,\tau)$ with $M$ abelian correspond precisely to probability spaces, we may think of tracial von Neumann algebras as (a special case of) \emph{noncommutative probability spaces}. Following this intuition, given a tracial von Neumann algebra $(M,\tau)$ and  $1\leq p <\infty$, we define the $\|\cdot\|_{p}$ on $M$ by
	\[
	\|x\|_{p}=\tau(|x|^{p})^{1/p},\mbox{ where $|x|=(x^{*}x)^{1/2}$.}
	\]
	It can be shown \cite{Dixmier1953} that this is indeed a norm on $M$.  We use the notation $\norm{x}_\infty$ for the operator norm.  Moreover, the definition of the norms can be extended to tuples by
	\[
	\norm{(x_1,\dots,x_d)}_p = \begin{cases} \left( \sum_{j=1}^d \tau(|x_j|^p) \right)^{1/p}, & p \in [1,\infty), \\ \max_{j=1,\dots,d} \norm{x_j}, & p = \infty. \end{cases}
	\]
	
	If $(M,\tau)$ is viewed as a non-commutative probability space, then its elements maybe viewed as non-commutative random variables.  In fact, a $d$-tuple $x = (x_1,\dots,x_d) \in M_{\sa}^d$ is the non-commutative analog of an $\bR^d$-valued random variable.  Although one cannot define the law (or probability distribution) of $x$ as a classical measure, we may define its non-commutative law as a certain linear function on a non-commutative polynomial algebra, just like the probability distribution $d$-tuple $X$ of bounded classical random variables defines a map $\bC[t_1,\dots,t_d] \to \bC$ sending $p$ to $\mathbb{E}[p(X)]$
	
	For $d \in \bN$, we let $\bC\ang{t_1,\cdots,t_d}$ be the algebra of noncommutative polynomials in $d$ formal variables $t_1$, \dots, $t_d$, i.e. the free $\bC$-algebra with $d$-generators. We give $\bC\ang{t_{1},\cdots,t_{d}}$ the unique $*$-algebra structure which makes the $t_{j}$ self-adjoint. By universality, if $A$ is any $*$-algebra, and $x=(x_{1},\cdots,x_d)\in A^d$ is a self-adjoint tuple, then there is a unique $*$-homomorphism $\bC\ang{t_{1},\cdots,t_d}\to A$ which sends $t_{j}$ to $x_{j}$. For $p \in \bC\ang{t_{1},\cdots,t_d}$ we use $p(x)$ for the image of $p$ under this $*$-homomorphism.  Given a tracial von Neumann algebra $(M,\tau)$ and $x \in M^d_{\sa}$, we define the \emph{law of $x$}, denoted $\ell_{x}$, to be the linear functional $\ell_{x}\colon \bC\ang{t_1,\cdots,t_d}\to \bC$ given by
	\[
	\ell_{x}(f)=\tau(f(x)).
	\]
	The non-commutative laws can be characterized as follows.
	
	\begin{prop}[{See \cite[Proposition 5.2.14]{AGZ2009}}] \label{prop:NClaws}
		Let $\ell: \bC\ang{t_1,\dots,t_d} \to \bC$ and let $R > 0$.  The following are equivalent:
		\begin{enumerate}[(i)]
			\item There exists a tracial von Neumann algebra $(M,\tau)$ and $x \in M_{\sa}^d$ such that $\ell = \ell_x$ and $\norm{x}_\infty \leq R$.
			\item $\ell$ satisfies the following conditions: \label{item:abstract characterization of law}
			\begin{itemize}
				\item $\ell(1) = 1$,
				\item $\ell(f^*f) \geq 0$ for $f \in \bC\ang{t_1,\dots,t_d}$,
				\item $\ell(fg) = \ell(gf)$ for $f, g \in \bC\ang{t_1,\dots,t_d}$,
				\item $|\ell(t_{i_1} \dots t_{i_k})| \leq R^k$ for all $k \in \bN$ and $i_1$, \dots, $i_k \in \{1,\dots,d\}$.
			\end{itemize}
		\end{enumerate}
	\end{prop}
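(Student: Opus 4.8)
The plan is to run the Gelfand--Naimark--Segal construction in the tracial setting; the only analytic ingredient is a moment estimate which forces the generators to act as bounded operators of operator norm at most $R$. The implication (i) $\Rightarrow$ (ii) I would dispatch first by a direct computation: if $\ell = \ell_x$ for $x \in M_{\sa}^d$ with $\norm{x}_\infty \le R$, then $\ell(1) = \tau(1) = 1$; $\ell(f^*f) = \tau(f(x)^*f(x)) \ge 0$ by positivity of $\tau$; $\ell(fg) = \tau(f(x)g(x)) = \tau(g(x)f(x)) = \ell(gf)$ since $\tau$ is a trace; and $|\ell(t_{i_1} \cdots t_{i_k})| = |\tau(x_{i_1} \cdots x_{i_k})| \le \norm{x_{i_1} \cdots x_{i_k}}_\infty \le R^k$ because $\tau$ is a state and each $\norm{x_j}_\infty \le R$.

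For (ii) $\Rightarrow$ (i) I would equip $\bC\ang{t_1,\dots,t_d}$ with the sesquilinear form $\ang{f,g} = \ell(g^*f)$, which is positive semidefinite by the second bullet; a Cauchy--Schwarz argument shows that $N = \{f : \ell(f^*f) = 0\}$ is a left ideal, so left multiplication descends to $\bC\ang{t_1,\dots,t_d}/N$. Let $H$ be the Hilbert space completion, write $\hat f$ for the class of $f$, and put $\xi = \hat 1$. Each $L_{t_j}$, left multiplication by $t_j$, is a symmetric operator on the dense domain, and the crux of the argument is to show $\norm{L_{t_j}} \le R$, that is, $\norm{L_{t_j}\hat q}^2 = \ell(q^* t_j^2 q) \le R^2\, \ell(q^*q)$ for all $q$. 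I would obtain this as follows: setting $g = qq^* \ge 0$, traciality of $\ell$ makes $p \mapsto \ell(gp)$ a positive functional on $\bC[X]$, so Cauchy--Schwarz yields $\ell(g t_j^{2m})^2 \le \ell(g)\, \ell(g t_j^{4m})$ for all $m$; iterating this inequality and feeding in the crude bound $\ell(g t_j^{2m}) \le \ell(g^2)^{1/2} R^{2m}$ coming from the fourth bullet, one lets $m \to \infty$ to conclude $\ell(g t_j^2) \le R^2 \ell(g)$, and then traciality gives $\ell(g t_j^2) = \ell(q^* t_j^2 q)$ and $\ell(g) = \ell(q^*q)$.

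Having the norm bound, I would set $M = \{L_{t_1},\dots,L_{t_d}\}'' \subseteq B(H)$, let $\tau = \ang{\,\cdot\,\xi,\xi}$ (a normal state, being a vector state), and let $x = (L_{t_1},\dots,L_{t_d}) \in M_{\sa}^d$, so that $\norm{x}_\infty \le R$ and $\ell_x(f) = \ang{f(x)\xi,\xi} = \ang{\hat f,\hat 1} = \ell(f)$, which is exactly $\ell = \ell_x$. It remains to verify that $\tau$ is a faithful trace. Traciality: $\tau(ab) = \tau(ba)$ holds for $a,b$ in the $*$-algebra generated by the $L_{t_j}$ by the third bullet, and extends to all of $M$ by normality of $\tau$ together with $\sigma$-weak continuity of left and right multiplication, approximating one variable at a time. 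Faithfulness: the conjugate-linear map $J\hat f = \widehat{f^*}$ is well-defined and isometric because $\ell$ is a trace, hence extends to a conjugation on $H$ satisfying $JL_{t_j}J = R_{t_j}$ (right multiplication by $t_j$), so $JMJ \subseteq M'$; since $\xi$ is cyclic for $M$ and $J\xi = \xi$, it is cyclic for $M'$, hence separating for $M$, and therefore $\tau(a^*a) = \norm{a\xi}^2 = 0$ forces $a = 0$. The only genuinely nontrivial step is the operator-norm bound $\norm{L_{t_j}} \le R$; everything else is routine bookkeeping with the GNS construction and the canonical conjugation, and one may of course instead cite \cite[Proposition 5.2.14]{AGZ2009}.
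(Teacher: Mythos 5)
Your proof is correct and follows precisely the GNS construction that the paper sketches in Remark \ref{rem:lawGNS} and otherwise delegates to \cite[Proposition 5.2.14]{AGZ2009}; the paper supplies no proof of its own beyond this citation. The one nontrivial analytic step, the operator-norm bound $\norm{L_{t_j}} \le R$, is handled by the standard iterated Cauchy--Schwarz argument against the moment estimate, and your verification of traciality and faithfulness via the canonical conjugation $J$ is the usual bookkeeping and is carried out correctly.
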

	
	\begin{rem} \label{rem:lawGNS}
		In fact, the proof of (ii) $\implies$ (i) gives an explicit description of the von Neumann algebra through the Gelfand-Naimark-Segal construction (see \cite[Proposition 5.2.14]{AGZ2009}).  Let $H = L^2(\ell)$ be separation-completion of $\bC\ang{t_1,\dots,t_d}$ with respect to the semi-inner product $\langle{f,g\rangle}_\ell = \ell(f^*g)$.  One can show that multiplication by $t_j$ gives a well-defined, bounded, self-adjoint operator $x_{j}$ on $\mathcal{H}$.  We take $M$ to be the von Neumann algebra generated by $x_1$, \dots, $x_d$, and let $\tau$ be the state corresponding to the vector $1$ in $\mathcal{H}$.  In fact, we will denote $M$ by $\mathrm{W}^*(\ell)$ and we denote by $\pi_\ell$ the unital $*$-homomorphism $\bC\ang{t_1,\dots,t_d}$ sending $t_j$ to $x_j$.
	\end{rem}
	
	Let $\Sigma_{d,R}$ be the set of all linear maps $\bC\ang{t_1,\dots,t_d} \to \bC$ satisfying the equivalent conditions of Proposition \ref{prop:NClaws}.  We equip $\Sigma_{d,R}$ with the weak$^*$ topology, that is, the topology of pointwise convergence on $\bC\ang{t_1,\dots,t_d}$.  It is easy to see that $\Sigma_{d,R}$ is compact and metrizable using Proposition \ref{prop:NClaws} (\ref{item:abstract characterization of law}).  
	
	\subsection{Microstate spaces and $1$-bounded entropy}
	
	Let $(M,\tau)$ be a diffuse tracial von Neumann algebra, and $x\in M_{\sa}^d$ for some $d \in \bN$ with $W^{*}(x)=M$.  Suppose that $\norm{x}_\infty \leq R$.  Following \cite{VoiculescuFreeEntropy2}, for each open set $\mathcal{O}$ of $\Sigma_{d,R}$ and $N \in \bN$, we define
	\[
	\Gamma_R^{(n)}(\cO) = \{X \in \mathbb{M}_n(\bC)_{\sa}^d: \ell_X \in \mathcal{O} \}.
	\]
	When $\mathcal{O}$ is a neighborhood of $\ell_x$, we call $\Gamma_R^{(n)}(\mathcal{O})$ a \emph{microstate space} for $x$.

	Given $d,n\in \bN$, $p\in [1,\infty]$, $\varepsilon>0$ and $\Omega,\Xi\subseteq \bM_{n}(\bC)^{d}$ then $\Xi$ is said to \emph{$(\varepsilon,\norm{\cdot}_p)$-cover $\Omega$} if for every $A\in \Omega$, there is a $B\in \Xi$ with $\|A-B\|_{p}<\varepsilon$. We define the \emph{covering number} of $\Omega\subseteq \bM_{n}(\bC)^{d}$, denote $K_{\varepsilon}(\Omega,\|\cdot\|_{p})$, to be the minimal cardinality of a set that $(\varepsilon,\|\cdot\|_{p})$-covers $\Omega$. We will use covering numbers for different values of $p$ in Section \ref{sec: Property (T)}.
	
	While these covering numbers are natural for many purposes, for unitarily invariant subsets of matrices, it is natural to take the orbital numbers modulo unitary conjugation.
	Given $n \in \bN$, $\varepsilon > 0$ and $\Omega,\Xi \subseteq \mathbb{M}_{n}(\bC)^d$ we say that $\Xi$ \emph{orbitally $(\varepsilon,\norm{\cdot}_2)$-covers} $\Omega$ if for every $A\in\Omega$, there is a $B \in \Xi$ and an $n \times n$ unitary matrix $V$ so that
	\[
	\norm{A-VBV^*}_2 < \varepsilon.
	\]
	We define the \emph{orbital covering number} $K_{\varepsilon}^{\textnormal{orb}}(\Omega,\norm{\cdot}_2)$ as the minimal cardinality of a set of $\Omega_0$ that orbitally $(\varepsilon,\norm{\cdot}_2)$-covers $\Omega$. Since we will usually be concerned with $\|\cdot\|_{2}$-norms we will frequently drop $\|\cdot\|_{2}$ from the notation and use $K_{\varepsilon}^{\textnormal{orb}}(\Omega)$ instead of $K_{\varepsilon}^{\textnormal{orb}}(\Omega,\|\cdot\|_{2})$.
	Let $R\in [0,\infty)$ be such that $\norm{x}_\infty < R$.
	
	The $1$-bounded entropy is defined in terms of the exponential growth rate of covering numbers of $\Gamma_R^{(n)}(\mathcal{O})$ (up to unitary conjugation) as $n \to \infty$ for neighborhoods $\mathcal{O}$ of $\ell_x$.
	For a weak$^{*}$-neighborhood $\mathcal{O}$ Of $\ell_{x}$, we define
	\begin{align*}
		h_{R,\varepsilon}(\mathcal{O})& := \limsup_{n \to\infty}\frac{1}{n^2}\log K_{\varepsilon}^{\orb}(\Gamma_{R}^{(n)}(\mathcal{O})), \\
		h_{R,\varepsilon}(x) &:= \inf_{\mathcal{O} \ni \ell_x} h_{R,\varepsilon}(\mathcal{O}),
	\end{align*}
	where the infimum is over all weak$^{*}$-neighborhoods $\mathcal{O}$ of $\ell_{x}$.  We then define
	\[
	h_R(x) := \sup_{\varepsilon > 0} h_{R,\varepsilon}(x).
	\]
	
	In \cite{Hayes2018}, it is shown that $h_R(x)$ is independent of $R$ provided that $\norm{x}_\infty \leq R$, and hence we may unambiguously denote it by $h(x)$.  In fact, $h(x)$ only depends on the von Neumann algebra generated by $x$.  Hence, for every finitely generated tracial von Neumann algebra $M$, we may define $h(M)$ as $h(x)$ for some generating tuple $x$.  If $M$ is not a factor, then the $1$-bounded entropy depends upon the trace we choose on $M$. We will use $h(M,\tau)$ for a tracial von Neumann algebra $(M,\tau)$ if we wish to emphasize the dependence of the $1$-bounded entropy on $\tau$. However, we will typically suppress this from the notation and just use $h(M)$ unless there is a possibility of confusion.  The definition of $h$ can be extended even to the case where $M$ is not finitely generated; see \cite{Hayes2018}.  However, our results only require a direct appeal to the definition in the finitely generated setting.
	The importance of  $1$-bounded entropy to the study of strong $1$-boundedness is encapsulated by the following result.
	
	\begin{thm}[{See  \cite[Proposition A.16]{Hayes2018}}]
		A tracial von Neumann algebra $M$ is strongly $1$-bounded in the sense of Jung \cite{Jung2007} if and only if $h(M) < \infty$.
	\end{thm}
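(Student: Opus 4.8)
The plan is to translate between Jung's definition of strong $1$-boundedness, which is phrased via $\norm{\cdot}_2$-covering numbers of the microstate spaces \emph{without} quotienting out unitary conjugation, and the orbital covering numbers defining $h$.  Since strong $1$-boundedness is an invariant of $W^*(x)$ by \cite[Theorem 3.2]{Jung2007} and the $1$-bounded entropy is as well by \cite[Theorem A.9]{Hayes2018}, I fix one self-adjoint generating tuple $x \in M_{\sa}^d$ and a radius $R > \norm{x}_\infty$; I also assume $M$ diffuse, since if $M$ is finite-dimensional the microstate spaces are, up to a small $\norm{\cdot}_2$-perturbation, single unitary orbits, and both conditions hold trivially.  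Write $\mathbb{K}_\varepsilon(x) = \inf_{\mathcal{O} \ni \ell_x} \limsup_{n \to \infty} \frac{1}{n^2} \log K_\varepsilon(\Gamma_R^{(n)}(\mathcal{O}), \norm{\cdot}_2)$.  Then $x$ is strongly $1$-bounded precisely when $\mathbb{K}_\varepsilon(x) \le \abs{\log\varepsilon} + O(1)$ as $\varepsilon \to 0$ (equivalently, $\delta_0(x) \le 1$ together with the ``intercept'' remaining bounded), and the task is to show this is equivalent to $h(M) = \sup_{\varepsilon > 0} h_{R,\varepsilon}(x) < \infty$.

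\emph{Strong $1$-boundedness from $h(M) < \infty$.}  Here the point is that a genuine $\varepsilon$-cover can be manufactured from an orbital $(\varepsilon/3)$-cover together with a net of the unitary group: if $\Xi$ orbitally $(\varepsilon/3, \norm{\cdot}_2)$-covers $\Gamma_R^{(n)}(\mathcal{O})$ and $\{V_1, \dots, V_N\}$ is an $(\varepsilon/3R)$-net of $\mathcal{U}(n)$ in operator norm, then $\{V_i B V_i^* : B \in \Xi,\ 1 \le i \le N\}$ is an $(\varepsilon, \norm{\cdot}_2)$-cover, because $\norm{VBV^* - V_i B V_i^*}_2 \le 2\norm{B}_\infty \norm{V - V_i}_\infty$.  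A standard volumetric estimate (or Szarek's bounds \cite{Szarek}) gives $N \le (CR/\varepsilon)^{n^2}$, so $\mathbb{K}_\varepsilon(x) \le h_{R,\varepsilon/3}(x) + \log(CR/\varepsilon) \le h(M) + \log(C'/\varepsilon)$.  Dividing by $\abs{\log\varepsilon}$ and letting $\varepsilon \to 0$ gives $\delta_0(x) \le 1$ via the covering-number formula for the free entropy dimension, and the displayed estimate is exactly $\mathbb{K}_\varepsilon(x) \le \abs{\log\varepsilon} + O(1)$.

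\emph{$h(M) < \infty$ from strong $1$-boundedness.}  Now one reverses the comparison, and diffuseness is essential.  For $X \in \Gamma_R^{(n)}(\mathcal{O})$ with $\mathcal{O}$ a sufficiently fine neighborhood of the (diffuse) law $\ell_x$, the relative commutant $\{X_1,\dots,X_d\}' \cap \bM_n(\bC)$ has normalized dimension $o(1)$ uniformly in $n$ as $\mathcal{O}$ shrinks: a block of multiplicity $\gtrsim n$ in $W^*(X)$ would, in a limit, produce a minimal projection in $W^*(\ell_x)$, contradicting diffuseness.  Hence the unitary orbit $\mathcal{U}(n) \cdot X \subseteq \Gamma_R^{(n)}(\mathcal{O})$ is a compact homogeneous manifold of real dimension $n^2 - o(n^2)$, and a volume/packing estimate on it (again controlled by \cite{Szarek}) gives $K_\varepsilon(\mathcal{U}(n)\cdot X, \norm{\cdot}_2) \ge (c\varepsilon)^{-n^2 + o(n^2)}$.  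Taking a maximal orbitally $(3\varepsilon)$-separated subset $\mathcal{C} \subseteq \Gamma_R^{(n)}(\mathcal{O})$, so $\abs{\mathcal{C}} \ge K_{3\varepsilon}^{\orb}(\Gamma_R^{(n)}(\mathcal{O}))$, and observing that an $\varepsilon$-ball meeting the $\varepsilon$-neighborhoods of two orbits $\mathcal{U}(n)\cdot X$, $\mathcal{U}(n)\cdot X'$ with $X, X' \in \mathcal{C}$ forces $X, X'$ to be orbitally $2\varepsilon$-close, we get
\[
K_\varepsilon(\Gamma_R^{(n)}(\mathcal{O}), \norm{\cdot}_2) \ge (c\varepsilon)^{-n^2 + o(n^2)}\, K_{3\varepsilon}^{\orb}(\Gamma_R^{(n)}(\mathcal{O})),
\]
and hence, after passing to neighborhoods fine enough that the dimension bound above is uniform, $\mathbb{K}_\varepsilon(x) \ge h_{R,3\varepsilon}(x) + \log(1/(c\varepsilon))$.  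Combining with $\mathbb{K}_\varepsilon(x) \le \abs{\log\varepsilon} + O(1)$ forces $h_{R,3\varepsilon}(x) \le O(1)$ uniformly in small $\varepsilon$; since $\varepsilon \mapsto h_{R,\varepsilon}(x)$ is non-increasing, $h(M) = \sup_{\varepsilon > 0} h_{R,\varepsilon}(x) < \infty$.

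I expect the main obstacle to be the dimension count in the second direction: proving that microstates for a fine neighborhood of a diffuse tuple have relative commutant of vanishing normalized dimension, uniformly in $n$, and upgrading ``the orbit has real dimension $\approx n^2$'' to the sharp covering-number lower bound $\varepsilon^{-n^2 + o(n^2)}$.  This is precisely where one must quantify the geometry of $\mathcal{U}(n)$ and its orbits, which is what Szarek's covering estimates \cite{Szarek} for the Grassmannian supply.  The first direction is comparatively soft, needing only the cheap upper net bound for $\mathcal{U}(n)$ and the standard expression of $\delta_0$ through covering numbers.
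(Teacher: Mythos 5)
The paper does not prove this statement; it simply cites \cite[Proposition A.16]{Hayes2018}, so your proposal is a reconstruction rather than something to compare line-by-line against a proof in the text. The overall architecture — compare ordinary $\norm{\cdot}_2$-covering numbers $\mathbb{K}_\varepsilon$ with orbital covering numbers $h_{R,\varepsilon}$ by (a) manufacturing genuine covers from orbital covers and unitary nets, and (b) manufacturing orbital covers from genuine covers by packing separated orbits — is the right shape, and direction (a) is essentially correct as you have written it.

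Direction (b) has a genuine gap at the step
\[
K_\varepsilon(\mathcal{U}(n)\cdot X, \norm{\cdot}_2) \ge (c\varepsilon)^{-n^2 + o(n^2)}.
\]
You justify this by ``the orbit is a compact homogeneous manifold of real dimension $n^2 - o(n^2)$, and a volume/packing estimate (again controlled by \cite{Szarek}) gives'' the bound. Dimension alone does not give a covering-number lower bound with a uniform constant $c$: the map $\mathcal{U}(n)\to\mathcal{U}(n)\cdot X$, while $2R$-Lipschitz, can have arbitrarily small singular values in directions transverse to the stabilizer (for instance when $X_1$ has two eigenvalues that are very close), so the image can be ``thin'' even though its dimension is $n^2 - o(n^2)$. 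Szarek's estimates are for Grassmannians, which carry a canonical metric with built-in non-degeneracy; the orbit $\mathcal{U}(n)\cdot X$ is not a Grassmannian and inherits a metric whose volume element is governed by the eigenvalue gaps of $X$. The quantitative ingredient that makes your estimate true is precisely a volume lower bound, and the clean way to get it is through the single-variable free entropy: project $\mathcal{U}(n)\cdot X$ onto $\mathcal{U}(n)\cdot X_1$ (a $\norm{\cdot}_2$-contraction), observe that the orbit of $X_1$ approximately fills a single-variable microstate space $\Gamma_R^{(n)}(\mathcal{O}_1)$, and use the Lebesgue-volume formula for $\chi(x_1)$ to conclude $K_\varepsilon(\mathcal{U}(n)\cdot X_1) \ge (c\varepsilon)^{-n^2+o(n^2)}$ with $c$ governed by $e^{\chi(x_1)}$. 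This is not incidental: it is exactly where the condition $\chi(x_i)>-\infty$ in Jung's definition of strong $1$-boundedness enters, and your proposal never addresses that condition in either direction. In direction (a) you should pick the generating tuple so that $x_1$ has finite free entropy (possible in any diffuse $M$, e.g.\ a semicircular element of an abelian subalgebra), which also produces this finiteness as required by Jung's definition; in direction (b) that condition is precisely what furnishes the constant $c$. Your closing paragraph correctly identifies the location of the difficulty, but the sketch as written attributes it to a dimension/Grassmannian count that does not yield the bound — the missing input is a free-entropy volume estimate, not a refinement of Szarek.
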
\label{thm:S1B in terms of 1-bounded entropy}
	Because of this result, we will not use Jung's original definition of strongly $1$-bounded \cite{Jung2007} and will instead show that algebras are strongly $1$-bounded by showing that they have finite $1$-bounded entropy.
	
	\subsection{Properties and applications of $1$-bounded entropy}\label{sec:S1B background}
	
	Since we will use general properties of $1$-bounded entropy frequently, we list some of the main ones here. To state these we also need the notion of the \emph{$1$-bounded entropy of $N$ in the presence of $M$}, denoted $h(N:M)$, defined for an inclusion $N\leq M$ of tracial von Neumann algebras. The $1$-bounded entropy in the presence is defined by modifying the definition of $1$-bounded entropy above to only measure the size of the space of microstates for $N$ which have an extension to microstates for $M$. We will not need the precise definition, so we refer the reader to \cite[Definition A.2]{Hayes2018} for the definition. We also use the microstates free entropy dimension $\delta_{0}(x)$ due to Voiculescu \cite[Definition 6.1]{Voiculescu1996} for a tuple $x\in M^{k}_{\sa}$ in a tracial von Neumann algebra $(M,\tau)$.
	Here and throughout the paper, $\mathcal{R}$ refers to the hyperfinite $\textrm{II}_{1}$-factor.   For an inclusion $N\leq M$, we let $\mathcal{N}_{M}^{wq}(N)$ be the set of $u\in \mathcal{U}(M)$ so that $uNu^{*}\cap N$ is diffuse. We assume all inclusions list below are trace preserving.
	We now list some general properties the $1$-bounded entropy here.
	
	\begin{enumerate}[1.]
		\item $h(M)=h(M:M)$ for every diffuse tracial von Neumann algebra $(M,\tau)$,
		\item $h(N:M)\geq 0$ if $N\leq M$ if diffuse and $M$ embeds into an ultrapower of $\mathcal{R}$, and $h(N:M)=-\infty$ if $M$ does not embed into an ultrapower of $\mathcal{R}$. (Exercise from the definitions).
		\item $h(N_{1}:M_{1})\leq h(N_{2}:M_{2})$ if $N_{1}\leq N_{2}\leq M_{2}\leq M_{1}$, if $N_{1}$ is diffuse. \label{I:monotonicity of 1 bounded entropy}(Exercise from the definitions).
		\item $h(N:M)\leq 0$ if $N\leq M$ and $N$ is diffuse and hyperfinite. \label{I:hyperfinite has 1-bdd ent zero} (Exercise from the definitions).
		\item $h(M)=\infty$ if $M = \mathrm{W}^*(x_{1},\cdots,x_{n})$ where $x_{j}\in M_{sa}$ for all $1\leq j\leq n$ and $\delta_{0}(x_{1},\cdots,x_{n})>1$. For example, this applies if $M=L(\bF_{n})$, for $n>1$. (This follows from Theorem \ref{thm:S1B in terms of 1-bounded entropy} and \cite[Corollary 3.5]{Jung2007}).)
		\item $h(N_{1}\vee N_{2}:M)\leq h(N_{1}:M)+h(N_{2}:M)$ if $N_{1},N_{2}\leq M$ and $N_{1}\cap N_{2}$ is diffuse. (See \cite[Lemma A.12]{Hayes2018}.)\label{I:subadditivity of 1 bdd ent}
		\item Suppose that $(N_{\alpha})_{\alpha}$ is an increasing chain of diffuse von Neumann subalgebras of a von Neumann algebra $M$. Then
		\[h\left(\bigvee_{\alpha}N_{\alpha}:M\right)=\sup_{\alpha}h(N_{\alpha}:M).\]
		\label{I:increasing limits of 1bdd ent first variable} (See \cite[Lemma A.10]{Hayes2018}.)
		\item $h(N:M)=h(N:M^{\omega})$ if $N\leq M$ is diffuse, and $\omega$ is a free ultrafilter on an infinite set. (See \cite[Proposition 4.5]{Hayes2018}.) \label{I:omegafying in the second variable}
		\item $h(\mathrm{W}^*(\mathcal{N}_{M}^{wq}(N)):M)=h(N:M)$ if $N\leq M$ is diffuse.  Here $\mathcal{N}_{M}^{wq}(N)=\{u\in \mathcal{U}(M):uN u^{*}=N\mbox{ is diffuse}\}$. (see \cite[Theorem 3.8 and Proposition 3.2]{Hayes2018}.) \label{item:preservation under normalizers}
		\item Let $I$ be a countable set, and $M=\bigoplus_{i\in I}M_{i}$ with $M_{i}$ diffuse for all $i$. Suppose that $\tau$ is a faithful trace on $M$, and that $\lambda_{i}$ is the trace of the identity on $M_{i}$. Endow $M_{i}$ with the trace $\tau_{i}=\frac{\tau|_{M_{i}}}{\lambda_{i}}$. Then
		\[h(M,\tau)\leq \sum_{i}\lambda_{i}^{2}h(M_{i},\tau_{i}).\]
		(See \cite[Proposition A.13]{Hayes2018}.)
		\label{item:direct sum inequality}
	\end{enumerate}
	The above axioms imply that all the von Neumann algebras in the following list have nonpositive $1$-bounded entropy:
	\begin{itemize}
		\item hyperfinite algebras,
		\item factors with Property Gamma,
		\item non-prime von Neumann algebras,
		\item algebras with diffuse center,
		\item algebras with diffuse, regular hyperfinite subalgebras (e.g. if the algebra has a Cartan subalgebra),
	\end{itemize}
	For proofs, see \cite[Section 1.2]{FreePinsker}. The class of algebras with nonpositive $1$-bounded entropy is also closed under direct sums (by item \ref{item:direct sum inequality}). Additionally, by item \ref{item:preservation under normalizers} if $N\leq M$ is regular (i.e. $W^{*}(\mathcal{N}_{M}(N))=M$), and $h(N:M)\leq 0$, then $h(M)\leq 0$. This applies if $h(N)\leq 0$. In particular, if $N$ is hyperfinite, or has Gamma, or is not prime, or has diffuse center, and if $N$ is regular in $M$, then $h(M)\leq 0$.

	\subsection{Property (T)}
	
	Property (T) for groups, due to Kazhdan \cite{KazhdanTDef}, is defined as follows.  If $\pi: G \to \mathcal{U}(\mathcal{H})$ is a unitary representation of the group $G$ on a Hilbert space, then a vector $\xi \in \mathcal{H}$ is said to be \emph{invariant} if $\pi(g) \xi = \xi$ for all $g \in G$.  We say that the group $G$ has \emph{Property (T)} if there exists a finite $F \subseteq G$ and $\delta > 0$ such that for every unitary representation $\pi$ on $\mathcal{H}$, if there exists a nonzero $\xi \in \mathcal{H}$ such that $\norm{\pi(g) \xi - \xi} < \delta$ for all $g \in F$, then $\mathcal{H}$ contains a nonzero invariant vector.
	
	It turns out that if $G$ has Property (T), then there exists a finite set $F \subseteq G$ and some $\gamma > 0$, such that for every unitary representation $\pi$ of $G$ on a Hilbert space $\mathcal{H}$, we have
	\[
	\norm{\xi - P_{\operatorname{invariant}}\xi} \leq \frac{1}{\gamma} \sum_{g \in F} \norm{\pi(g)\xi - \xi},
	\]
	where $P_{\operatorname{invariant}}: H \to H$ is the projection onto the subspace of invariant vectors.  Such a set $F$ is called a \emph{Kazhdan set} and $\gamma$ is called the \emph{Kazhdan constant}.  For a proof, see \cite[Definition 1.1.3, Proposition 1.1.9]{BHV}.  It is sometimes notationally convenient to list the elements of $F$ as a tuple $(g_1,\dots,g_{|F|})$, which we call a \emph{Kazhdan tuple}.
	
	The von Neumann algebraic analogue of Property (T), introduced by Connes and Jones \cite{ConnesJones} for $\mathrm{II}_1$ factors and Popa \cite{PopaL2Betti} for general tracial von Neumann algebras, uses Hilbert bimodules over a tracial von Neumann algebra $M$ in place of group representations.
	
	\begin{defn}~
		\begin{enumerate}[(i)]
			\item If $M$ is a von Neumann algebra, then a \emph{Hilbert $M$-$M$-bimodule} is a Hilbert space equipped with normal left and right actions of $M$.
			\item A vector $\xi$ in a Hilbert $M$-$M$ bimodule $\mathcal{H}$ is \emph{central} if $x \xi = \xi x$ for all $x \in M$.
			\item If $(M,\tau)$ is a tracial von Neumann algebra, then a vector $\xi$ in an $M$-$M$-bimodule $M$ is \emph{bitracial} if $\ang{\xi, x\xi} = \tau(x) = \ang{\xi, \xi x}$ for all $x \in M$.
		\end{enumerate}
	\end{defn}
	
	There is analogy between central vectors in a Hilbert $M$-$M$-bimodule and invariant vectors in a group representation motivated by the way in which representations of a discrete group $G$ naturally give rise to bimodules over the group von Neumann algebra $L(G)$.  Let $\lambda$ and $\rho$ denote the left and right regular representations of $G$ on $\ell^2(G)$.  If $\pi$ is a representation of $G$ on $\mathcal{H}$, then $\pi \otimes \lambda$ is a left representation on $H \otimes \ell^2(G)$ and $1 \otimes \rho$ is a right representation on $H \otimes \ell^2(G)$.  These representations extend to normal left/right actions of $L(G)$, making $H \otimes \ell^2(G)$ into a Hilbert $L(G)$-$L(G)$-bimodule.  If $\xi$ is an invariant unit vector in $\mathcal{H}$, then $\xi \otimes \delta_e$ is a central and bitracial vector in $H \otimes \ell^2(G)$.  Hence, Property (T) for tracial von Neumann algebras is defined as follows.
	
	\begin{defn}[{\cite{ConnesJones,PopaL2Betti}}]
		A tracial von Neumann algebra $(M,\tau)$ has \emph{Property (T)} if for every $\varepsilon > 0$, there is a finite $F \subseteq M$ and $\delta > 0$, such that for every bitracial vector $\xi$ in a Hilbert $M$-$M$-bimodule $\mathcal{H}$, if $\sum_{x \in F} \norm{x\xi - \xi x} < \delta$, then there exists a central $\eta \in \mathcal{H}$ with $\norm{\xi - \eta} < \varepsilon$.
	\end{defn}
	
	It turns out that a group $G$ has Property (T) if and only if $L(G)$ has Property (T) \cite[Theorem 2]{ConnesJones}. One can also formulate a von Neumann algebraic version of Kazhdan tuples as follows.
	
	\begin{defn}
		Let $(M,\tau)$ be a tracial von Neumann algebra.  We say that $x = (x_1,\dots,x_d) \in M_{\sa}^d$ is a \emph{(self-adjoint) Kazhdan tuple} if there exists $\gamma > 0$ such that for every Hilbert $M$-$M$-bimodule $\mathcal{H}$ and $\xi \in \mathcal{H}$, we have
		\[
		\norm{\xi - P_{\textnormal{central}} \xi} \leq \frac{1}{\gamma} \norm{x\xi - \xi x}_{H^{\oplus d}},
		\]
		where $x \xi = (x_1 \xi, \dots, x_d \xi)$ and $\xi x = (\xi x_1,\dots, \xi x_d)$, and where $P_{\textnormal{central}}$ is the projection of $H$ onto the subspace of central vectors.  In this case, we call $\gamma$ the \emph{Kazhdan constant} associated to the Kazhdan tuple $x$.
	\end{defn}
	
	Unlike the group case, every tracial von Neumann algebra with Property (T) need not admit a Kazhdan tuple (the finite set $F$ in the definition of Property (T) may depend \emph{a priori} on $\varepsilon$).  However, it is known for several natural classes of examples.
	
	\begin{lemm}~
		\begin{enumerate}[(i)]
			\item If $M$ is a Property (T) von Neumann algebra with finite-dimensional center, then $M$ has a Kazhdan tuple. \label{item:fd center Kazhdan tuple}
			\item  If $G$ is a Property (T) group, and $\pi\colon G\to \mathcal{U}(\mathcal{H})$ is a projective unitary representation such that $W^{*}(\pi(G))$ is finite, then $W^{*}(\pi(G))$ has a finite Kazhdan tuple. \label{item:group Kazhdan tuple}
		\end{enumerate}
	\end{lemm}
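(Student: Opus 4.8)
The plan is to treat the two parts by different, essentially standard, devices, with (ii) being the clean self-contained one. For (ii), fix a Kazhdan set $\{g_1,\dots,g_d\}$ for $G$ with constant $\gamma>0$ and put $M=W^*(\pi(G))$. Given any Hilbert $M$-$M$-bimodule $\mathcal{H}$, I would define $\sigma\colon G\to\mathcal{U}(\mathcal{H})$ by $\sigma(g)\xi=\pi(g)\,\xi\,\pi(g)^*$; since $\pi(g)\pi(h)=c(g,h)\pi(gh)$ with $|c(g,h)|=1$, the cocycle cancels in $\pi(g)\xi\pi(g)^*$, so $\sigma$ is a genuine (not merely projective) unitary representation of the discrete group $G$, and the non-projective case is simply $c\equiv 1$. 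A vector is $\sigma$-invariant precisely when it commutes with every $\pi(g)$, hence — by normality of the two actions and von Neumann density of $\pi(G)$ in $M$ — with all of $M$, i.e.\ precisely when it is central; thus $P^{\sigma}_{\operatorname{invariant}}=P_{\operatorname{central}}$. Feeding $\sigma$ into the Kazhdan inequality for $G$ gives
\[
\norm{\xi-P_{\operatorname{central}}\xi}\le\frac{1}{\gamma}\sum_{j=1}^{d}\norm{\pi(g_j)\xi-\xi\pi(g_j)}.
\]
Writing $\pi(g_j)=a_j+ib_j$ with $a_j,b_j\in M_{\sa}$, one has $\norm{\pi(g_j)\xi-\xi\pi(g_j)}\le\norm{a_j\xi-\xi a_j}+\norm{b_j\xi-\xi b_j}$, and Cauchy--Schwarz turns the $\ell^1$-sum of commutator norms into the $\ell^2$-norm occurring in the definition of a Kazhdan tuple; so $(a_1,b_1,\dots,a_d,b_d)\in M_{\sa}^{2d}$ is a Kazhdan tuple with constant $\gamma/\sqrt{2d}$. (Finiteness of $M$ enters only to place us in the tracial category where the notion is defined.)

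For (i), I would first reduce to the factor case. Since $Z(M)$ is finite-dimensional, $M=\bigoplus_{k=1}^{n}Mz_k$ for the minimal central projections $z_k$, and each $M_k:=Mz_k$, with its normalized trace, has Property (T) by \cite[Proposition 4.7]{PopaL2Betti}; suppose each $M_k$ has a Kazhdan tuple $x^{(k)}\in(M_k)_{\sa}^{d_k}$ with constant $\gamma_k$. For a bimodule $\mathcal{H}$ and $\xi\in\mathcal{H}$, write $\xi=\sum_{k,l}z_k\xi z_l$ as a sum of mutually orthogonal vectors; the central vectors of $\mathcal{H}$ are exactly those that are block-diagonal with $M_k$-central diagonal entries, so
\[
\norm{\xi-P_{\operatorname{central}}\xi}^2=\sum_{k\ne l}\norm{z_k\xi z_l}^2+\sum_{k}\norm{z_k\xi z_k-P^{(k)}_{\operatorname{central}}(z_k\xi z_k)}^2 .
\]
One checks that $\norm{z_k\xi-\xi z_k}^2=\sum_{l\ne k}\big(\norm{z_k\xi z_l}^2+\norm{z_l\xi z_k}^2\big)$ and $\norm{x^{(k)}_j\xi-\xi x^{(k)}_j}\ge\norm{x^{(k)}_j(z_k\xi z_k)-(z_k\xi z_k)x^{(k)}_j}$, and assembling these estimates shows that $z_1,\dots,z_n$ together with all the $x^{(k)}_j$ form a (self-adjoint) Kazhdan tuple for $M$ with constant $\min(1,\gamma_1,\dots,\gamma_n)$. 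A finite-dimensional summand $M_k=\mathbb{M}_m(\bC)$ causes no trouble, since every $\mathbb{M}_m(\bC)$-bimodule is an amplification of $\bC^m\otimes\overline{\bC^m}$ and hence any generating tuple of $\mathbb{M}_m(\bC)$ is a Kazhdan tuple.

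The remaining ingredient is that a $\mathrm{II}_1$ factor $M$ with Property (T) has a Kazhdan tuple. I would deduce this from the known facts (see \cite{ConnesJones,PopaL2Betti}) that such an $M$ is finitely generated and that the ``$\forall\varepsilon\,\exists F$'' in the definition of Property (T) can be upgraded to a single finite set working for all $\varepsilon$. Concretely, choose self-adjoint generators $y_1,\dots,y_m$ and a finite self-adjoint $F_0$ witnessing Property (T) for $\varepsilon=\tfrac12$, and test whether $F:=\{y_1,\dots,y_m\}\cup F_0$ is a Kazhdan tuple. If not, one gets unit vectors $\xi_n$ in bimodules $\mathcal{H}_n$ with $P_{\operatorname{central}}\xi_n=0$ and $\norm{x\xi_n-\xi_n x}\to0$ for all $x\in F$; since $F$ generates $M$, a density argument makes $\xi_n$ asymptotically central for all of $M$. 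A bookkeeping computation — peel one variable off $ab\xi_n$ and move the other past $\xi_n$ using the adjoint relations of the bimodule actions — then gives $\ang{ab\,\xi_n,\xi_n}-\ang{ba\,\xi_n,\xi_n}\to0$, so every weak$^*$-cluster point of the normal states $\ang{\,\cdot\,\xi_n,\xi_n}$ is a tracial state, hence equals $\tau$ by uniqueness of the trace on a $\mathrm{II}_1$ factor. Thus $\xi_n$ is asymptotically bitracial, and the sequential form of the definition of Property (T) then forces $\norm{\xi_n-P_{\operatorname{central}}\xi_n}\to0$, contradicting $\norm{\xi_n}=1$ and $P_{\operatorname{central}}\xi_n=0$. \textbf{The step I expect to be the main obstacle} is exactly this passage from ``almost central'' to ``almost bitracial'': it is where both finite generation of $M$ and its factoriality are indispensable, and it is the reason the statement is confined to finite-dimensional center — for a general Property (T) algebra one would instead have to bound $h(M)$ directly, which is open (see the discussion following Theorem~\ref{thm: Property T}).
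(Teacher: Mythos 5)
Your proof of part (ii) is the paper's: pass to the genuine representation $g\mapsto\pi(g)(\cdot)\pi(g)^{*}$ on a bimodule (the scalar cocycle cancels), observe that invariant vectors for this representation are exactly the central vectors, and take real and imaginary parts of the $\pi(g_j)$. The reduction of part (i) to the factor case via the minimal central projections is also the paper's, and your extra care with the off-diagonal blocks $z_k\xi z_l$ and with finite-dimensional summands is correct (the paper glosses over the type $\mathrm I$ case).

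Where you diverge, and where there is a genuine gap, is the factor case. The paper simply cites \cite[Proposition 1]{ConnesJones} for the existence of a Kazhdan tuple in a Property (T) $\mathrm{II}_1$ factor, whereas you try to reprove it. The step ``since $F$ generates $M$, a density argument makes $\xi_n$ asymptotically central for all of $M$'' does not hold. By the Leibniz rule you get $\norm{a\xi_n-\xi_n a}\to 0$ for $a$ in the $*$-algebra $A$ generated by $F$, but $A$ is only weak$^{*}$-dense (not norm-dense) in $M$, and the bimodules $\mathcal H_n$, hence the left and right normal representations of $M$, change with $n$; so normality of each individual action gives no control, uniform in $n$, on $\norm{a\xi_n-\xi_n a}$ for a fixed $a\in M$ approximated strongly by elements of $A$. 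Consequently, a weak$^{*}$-cluster point of the vector states $\ang{\,\cdot\,\xi_n,\xi_n}$ is known to be tracial only on $A$, not on all of $M$, so the identification with $\tau$ (and hence the asymptotic bitraciality needed to feed $\xi_n$ into the definition of Property~(T)) does not follow. This is precisely the subtlety that \cite[Proposition 1]{ConnesJones} handles, by decomposing the vector state into its normal and singular parts rather than by a density argument; an ultraproduct-bimodule shortcut fails for the same reason, since an ultraproduct of normal representations need not be normal. You correctly flagged this passage as the likely obstacle; the fix is simply to cite \cite[Proposition 1]{ConnesJones} as the paper does.
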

	
	\begin{proof}
		(\ref{item:fd center Kazhdan tuple}): Cutting $M$ down by the minimal central projections in $M$, we see that $M$ is a direct sum of $\textrm{II}_{1}$-factors. Each such factor has Property (T) by \cite[Proposition 4.7.2]{PopaL2Betti}, and thus admits a Kazhdan tuple by \cite[Proposition 1]{ConnesJones}. Combining these Kazhdan tuples gives a Kazhdan tuple for $M$.
		
		(\ref{item:group Kazhdan tuple}): Choose a Kazhdan set $\{g_{1},\cdots,g_{k}\}\subseteq G$ for $G$. Let
		\[x=(\Re(\pi(g_{1})),\Re(\pi(g_{2})),\dots,\Re(\pi(g_{k})),\Im(\pi(g_{1})),\dots,\Im(\pi(g_{k})))\in M_{\sa}^{2k}.\]
		Even though $\pi$ may not be an honest representation, note that if $\mathcal{H}$ is an $M$-$M$ bimodule, then we do have a representation of $G$ on $\mathcal{H}$ given by conjugating by $\pi(G)$. Applying \cite[Proposition 1.1.9]{BHV} to this representation, we see that $x$ is a Kazhdan tuple for $W^{*}(\pi(G))$.
	\end{proof}
	
	
	It is a standard exercise to show that a Kazhdan tuple for a group $G$ must generate the group.  A well-known result of Popa \cite[Theorem 4.4.1]{PopaCorr} shows that the same is true for Kazhdan tuples associated to tracial von Neumann algebras.  For the reader's convenience, we recall the proof here.
	
	\begin{lemm} \label{lemm: Kazhdan tuple generates}
		If $(M,\tau)$ is a tracial von Neumann algebra and $x = (x_1,\dots,x_d)$ is a Kazhdan tuple, then $x$ generates $M$ as a von Neumann algebra.
	\end{lemm}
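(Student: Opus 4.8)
The plan is to argue by contradiction: suppose $N := \mathrm{W}^*(x_1,\dots,x_d)$ is a proper subalgebra of $M$. I want to produce an $M$-$M$-bimodule with a non-central unit vector $\xi$ for which $\norm{x_j\xi - \xi x_j}$ is as small as I like (in fact zero), contradicting the Kazhdan inequality. The natural candidate is a bimodule built from $N$ itself. First I would consider the $M$-$M$-bimodule $\mathcal{H} = L^2(M) \otimes_N L^2(M)$, the Connes tensor product over $N$, with the obvious left action of $M$ on the first leg and right action on the second leg. The vector $\xi = \hat 1 \otimes_N \hat 1$ is a unit vector, and because $x_j \in N$, one has $x_j \cdot (\hat 1 \otimes_N \hat 1) = \widehat{x_j} \otimes_N \hat 1 = \hat 1 \otimes_N \widehat{x_j} = (\hat 1 \otimes_N \hat 1) \cdot x_j$, so $x_j \xi = \xi x_j$ for every $j$. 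Thus $\norm{x\xi - \xi x}_{\mathcal{H}^{\oplus d}} = 0$, and the Kazhdan inequality forces $\xi = P_{\mathrm{central}} \xi$, i.e. $\xi$ is a central vector in $\mathcal{H}$.

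The crux is then to show that $\xi$ being central in $\mathcal{H} = L^2(M) \otimes_N L^2(M)$ forces $N = M$. The point is that a central vector $\xi$ satisfies $m\xi = \xi m$ for all $m \in M$, and I would unpack this: writing out $m \cdot (\hat 1 \otimes_N \hat 1)$ versus $(\hat 1 \otimes_N \hat 1) \cdot m$ and using that the Connes fusion identifies these two exactly when $m$ can be pulled across the tensor sign, which (via the bimodule structure of $L^2(M)$ over $N$) happens precisely for elements of $N$. More concretely: the $M$-$M$-bimodule $L^2(M)\otimes_N L^2(M)$ is the bimodule associated with the inclusion $N \subseteq M$, and $\hat 1 \otimes_N \hat 1$ is central if and only if $N = M$ — this is a standard fact (the "coarse-ness" of the bimodule is detected by how far $N$ is from $M$). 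Alternatively, and perhaps cleaner for a self-contained write-up, I would compute $\norm{m\xi - \xi m}^2$ directly: it equals $2\tau(m^*m) - 2\Re\langle m\xi, \xi m\rangle$, and $\langle m \xi, \xi m\rangle = \langle \widehat m \otimes_N \hat 1, \hat 1 \otimes_N \widehat m\rangle = \tau(E_N(m)^* E_N(m)) = \norm{E_N(m)}_2^2$, where $E_N$ is the trace-preserving conditional expectation onto $N$. Hence $\norm{m\xi - \xi m}^2 = 2(\norm{m}_2^2 - \norm{E_N(m)}_2^2)$, which vanishes for all $m$ iff $E_N = \mathrm{id}$ iff $N = M$.

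Putting it together: centrality of $\xi$ gives $\norm{m}_2 = \norm{E_N(m)}_2$ for all $m \in M$, whence $m = E_N(m) \in N$ for all $m$, so $M = N = \mathrm{W}^*(x_1,\dots,x_d)$, as desired. The main obstacle is getting the inner product computation $\langle m\xi, \xi m \rangle = \norm{E_N(m)}_2^2$ correct, which requires being careful about the definition of the Connes tensor product $L^2(M) \otimes_N L^2(M)$ and the identification of its inner product on elementary tensors coming from $M$ (namely $\langle a \otimes_N b, c \otimes_N d\rangle = \tau(b^* E_N(a^*c) d)$ for $a,b,c,d \in M$); once that formula is in hand the rest is immediate. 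I should also note at the outset that $N$ is diffuse — this is automatic here since $M$ is a tracial von Neumann algebra and the argument does not actually need diffuseness of $N$, only the bimodule construction, which is available for any inclusion — so no separate hypothesis is needed.
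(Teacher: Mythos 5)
Your proof is correct, and it is in substance the same as the paper's, just expressed in a different dialect. The paper takes $\mathcal{H} = L^2(\langle M, e\rangle, \Tr)$ with $e$ the Jones projection onto $L^2(N)$, and uses that $ae = ea$ for $a \in N$ so that the Kazhdan inequality forces $e$ to be central, then invokes the standard fact $M \cap \{e\}' = N$ to conclude. Your $\mathcal{H} = L^2(M) \otimes_N L^2(M)$ is $M$-$M$-bimodule isomorphic to the paper's $L^2(\langle M, e\rangle, \Tr)$ via $aeb \mapsto \hat a \otimes_N \hat b$, and under this identification your vector $\hat 1 \otimes_N \hat 1$ is exactly $e$. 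Where you differ is the finishing step: instead of citing $M \cap \{e\}' = N$, you compute $\norm{m\xi - \xi m}^2 = 2(\norm{m}_2^2 - \norm{E_N(m)}_2^2)$ explicitly from the Connes-fusion inner product formula $\langle a\otimes_N b, c\otimes_N d\rangle = \tau(b^* E_N(a^*c) d)$ and conclude $E_N = \id$. This is a perfectly valid and self-contained alternative that trades the structural fact about the basic construction for a one-line inner-product calculation; it buys you nothing essentially new but avoids citing Jones's relative commutant identity. One small caveat: your closing aside that diffuseness of $N$ ``is automatic here since $M$ is a tracial von Neumann algebra'' is not a correct statement ($N$ could a priori be atomic or even finite-dimensional); but as you immediately note, the argument never uses diffuseness, so this is harmless.
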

	
	\begin{proof}
		Consider the standard (GNS) representation of $M$ on $L^2(M,\tau)$.  Let $N = \mathrm{W}^*(x) \subseteq M$.  Let $e \in B(L^2(M,\tau))$ be the orthogonal projection onto $L^2(N,\tau|_N) \subseteq L^2(M,\tau)$.  The von Neumann algebra $M_1 = \ang{M,e} \subseteq B(L^2(M,\tau))$ is called the basic construction for $N \subseteq M$.  It is well known that $M \cap \{e\}' = N$ and that $M$ has a semi-finite trace $\Tr$ such that $\Tr(aeb) = \tau(E_N(a) E_N(b))$ for $a, b \in M$ by \cite{Jones83}.  Since $M$ embeds into $M_1$, we may regard $L^2(M_1,\Tr)$ as an $M$-$M$ bimodule.  The element $e \in L^2(M_1,\Tr)$ satisfies $ae = ea$ for $a \in N$ and in particular this holds for $a = x_1$, \dots, $x_d$.  Since $X$ is a Kazhdan tuple for $M$, we have $ae = ea$ for all $a \in M$, which implies that $M = N$.
	\end{proof}
	
	\subsection{Ultraproducts of matrix algebras} \label{subsec:ultraproduct}
	
	At several points, we use ultraproducts of matrix algebras, so we recall the relevant background as well as the connection between ultraproducts and microstate spaces.
	
	
	\begin{defn}
		Let $J$ be an infinite set. A \emph{free ultrafilter} is a unital homomorphism
		\[\omega\colon \ell^{\infty}(J)/c_{0}(J)\to \bC.\]
		For every $(a_{n})_{n\in J}\in \ell^{\infty}(J)$ we will use $\lim_{n\to\omega}a_{n}$ for $\omega((a_{n})_{n\in J}+c_{0}(J))$. 
	\end{defn}
	
	It follows from \cite[Proposition VIII.1.12]{Conway} that $\lim_{n\to\omega}$ is a $*$-homomorphism. It thus preserves inequalities, and commutes with complex conjugation, by definition $\lim_{n\to\omega}$ preserves sums and products. By Gelfand-Naimark duality \cite[Theorem VIII.2.1]{Conway}, free ultrafilters exist in abundance (in fact, there are enough to separate points in $\ell^{\infty}(J)/c_{0}(J)$). We use $\beta J\setminus J$ for the space of free ultrafilters on $J$. 
	
	Recall that if $\omega$ is a free ultrafilter on $\bN$, then the tracial ultraproduct of $\bM_{n}(\bC)$ with respect to $\omega$ is given by
	\[
	\prod_{n\to\omega}\bM_{n}(\bC)=\frac{\{(x_{k})_{n}\in \prod_{n}\bM_{n}(\bC):\sup_{n}\|x_{n}\|_{\infty}<\infty\}}{\{(x_{n})_{n}\in \prod_{n}\bM_{n}(\bC):\sup_{n}\|x_{n}\|_{\infty}<\infty,\lim_{n\to\omega}\|x_{n}\|_{2}=0\}}.
	\]
	If $(x_{n})_{n}\in \prod_{n}\bM_{n}(\bC)$ and $\sup_{n}\|x_{n}\|_{\infty}<\infty$, we let $[x_{n}]_{n}$ be the image of $(x_{n})_{n}$ under the natural quotient map
	\[
	\{(x_{n})_{n}\in \prod_{n}\bM_{n}(\bC):\sup_{n}\|x_{n}\|_{\infty}<\infty\}\to \prod_{n\to\omega}\bM_{n}(\bC).
	\]
	It can be shown (see \cite[Lemma A.9]{BrownOzawa2008}) that this is a tracial von Neumann algebra, with trace given by
	\[\tau_{\omega}((x_{n})_{n\to\omega})=\lim_{k\to\omega}\tr_n(x_{n}).\]
	We say that a tracial von Neumann algebra $(M,\tau)$ is \emph{Connes-embeddable} if every von Neumann subalgebra with separable predual admits a trace-preserving embedding into a tracial ultraproduct of matrices.
	
	Suppose that $d \in \bN$, and $R>0$. If $(\ell_{n})_{n}$ is a sequence in $\Sigma_{d,R}$, and $\ell\in \Sigma_{d,R}$ we say that $\lim_{n\to\omega}\ell_{n}=\ell$ if $\lim_{n\to\omega}\ell_{n}(P)=\ell(P)$ for all $P\in \bC\ang{t_{1},\cdots,t_{d}}$.  Suppose $(M,\tau)$ is a tracial von Neumann algebra and $x\in M_{\sa}^{d}$. If $X^{(n)}\in \bM_{n}(\bC)_{\sa}^{d}$ satisfies $\lim_{n\to\omega}\ell_{X^{(n)}}=\ell_{x}$, then there is a unique trace-preserving embedding $\pi\colon M\to \prod_{n\to\omega}\bM_{n}(\bC)$ which satisfies
	\[
	\pi(x_{j})=[X_{j}^{(n)}]_{n}\textnormal{ for $j=1,\cdots,d$;}
	\]
	(see for instance \cite[Lemma 5.10]{GJNS2021} for further detail).  Because of this, we may heuristically think of the microstate spaces $\Gamma^{(n)}_{R}(\mathcal{O})$ as $\mathcal{O}\to \ell_{x},n\to\infty$ as parameterizing the space of embeddings of $M$ into an ultraproduct of matrices.
	
	See \cite{JungTubularity} and \cite[Sections 1.2, 1.3]{ScottSri2019ultraproduct} for further discussion of the connections between ultraproducts and microstate spaces.  These concepts also relate to random matrix theory; see for instance \cite{VoiculescuFreeEntropy2,Voiculescu1998,DJS2005,BDJ2008,FreePinsker}.

	\section{Proof of Theorem \ref{thm: Property T}} \label{sec: Property (T)}
	
	
	As in Theorem \ref{thm: Property T}, we consider a tracial von Neumann algebra $(M,\tau)$ together with a Kazhdan tuple $x = (x_1,\dots,x_d)$.  The following lemma is a refinement of \cite[Theorem 1.1]{JungS}.  It shows roughly that microstates for our Kazhdan tuple are unitarily conjugate except for a piece that lies under a projection of small trace.
	The core idea for the proof of this lemma is the corresponding statement about embeddings into ultraproducts of matrices: if two embeddings of a Property (T) algebra are close on $x$, then they are unitarily conjugate except on a small corner of $M$. Further, the size of that corner can be controlled in terms of how close they are on $x$.

	\begin{lemm} \label{lemm: property T microstates}
		Let $(M,\tau)$ be a tracial von Neumann algebra and let $x = (x_1,\dots,x_d) \in M_{\sa}^d$ be a Kazhdan tuple with Kazhdan constant $\gamma$ and suppose $\norm{x}_\infty < R$.  Then for every $\varepsilon > 0$ and $\delta > 0$, there exists a neighborhood $\cO$ of $\ell_x$ and $n_0 \in \bN$ such that for all $n \geq n_0$, if $Y, Z \in \Gamma_R^{(n)}(\cO)$ with $\norm{Y - Z}_2 \leq \varepsilon$, then there exists a unitary $U$ in $\mathbb{M}_n(\bC)$ and a projection $P$ such that
		\[
		\norm{(U^*YU - Z)(1 - P)}_2 < \delta, \qquad \tr_N(P) < \frac{\varepsilon}{\gamma} \left(2 + \frac{\varepsilon}{\gamma} \right) + \delta.
		\]
	\end{lemm}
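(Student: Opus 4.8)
The plan is to prove the statement by an ultraproduct/compactness argument, as the remark preceding the lemma suggests: the desired inequality is really a quantitative version of Popa's intertwining-by-bimodules applied to two embeddings of $M$ into an ultraproduct of matrix algebras that are close on the Kazhdan tuple.

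\textbf{Reduction to the ultraproduct.} Suppose the conclusion fails for some fixed $\varepsilon,\delta>0$. Since $\Sigma_{d,R}$ is metrizable, fix a decreasing neighborhood basis $\cO_1\supseteq\cO_2\supseteq\cdots$ of $\ell_x$ with $\bigcap_k\cO_k=\{\ell_x\}$. For each $k$, applying the negated statement with the neighborhood $\cO_k$ and $n_0=k$ yields $n_k\geq k$ and microstates $Y^{(k)},Z^{(k)}\in\Gamma_R^{(n_k)}(\cO_k)$ with $\norm{Y^{(k)}-Z^{(k)}}_2\leq\varepsilon$ for which \emph{no} pair $(U,P)$ of a unitary $U$ and a projection $P$ with $\tr_{n_k}(P)<\frac{\varepsilon}{\gamma}(2+\frac{\varepsilon}{\gamma})+\delta$ satisfies $\norm{(U^*Y^{(k)}U-Z^{(k)})(1-P)}_2<\delta$. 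Fix a free ultrafilter $\omega$ and set $\mathcal{M}=\prod_{k\to\omega}\bM_{n_k}(\bC)$, a tracial factor with trace $\tau_\omega$. Since $\ell_{Y^{(k)}},\ell_{Z^{(k)}}\in\cO_k$, we have $\lim_{k\to\omega}\ell_{Y^{(k)}}=\ell_x=\lim_{k\to\omega}\ell_{Z^{(k)}}$, so by the correspondence between microstates and ultraproduct embeddings recalled in Section~\ref{subsec:ultraproduct} there are trace-preserving embeddings $\pi,\sigma\colon M\to\mathcal{M}$ with $\pi(x_j)=[Y^{(k)}_j]_k$ and $\sigma(x_j)=[Z^{(k)}_j]_k$, and $\norm{\pi(x)-\sigma(x)}_2=\lim_{k\to\omega}\norm{Y^{(k)}-Z^{(k)}}_2\leq\varepsilon$.

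\textbf{Producing an intertwiner.} View $\mathcal{H}=L^2(\mathcal{M})$ as a Hilbert $M$-$M$-bimodule via $y\cdot\zeta\cdot z=\pi(y)\zeta\sigma(z)$, and take the bitracial vector $\xi=\widehat 1$. Then $\norm{x\xi-\xi x}_{\mathcal{H}^{\oplus d}}=\norm{\pi(x)-\sigma(x)}_2\leq\varepsilon$, so the Kazhdan inequality produces a central vector $\eta=P_{\textnormal{central}}\xi$ with $\norm{\xi-\eta}_2\leq\varepsilon/\gamma$; centrality means $\pi(y)\eta=\eta\sigma(y)$ for all $y\in M$. Take the polar decomposition $\eta=v|\eta|$ in $L^2(\mathcal{M})$, so $v\in\mathcal{M}$ is a partial isometry and $q:=v^*v$ is the support projection of $|\eta|$. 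From $\eta^*\eta=|\eta|^2$ commuting with $\sigma(M)$ one gets that $|\eta|$, and hence $q$, commutes with $\sigma(M)$, and the standard manipulation then gives $v\sigma(y)=\pi(y)v$ for all $y\in M$; consequently $p:=vv^*$ commutes with $\pi(M)$ and $\tau_\omega(p)=\tau_\omega(q)$. Since $\mathcal{M}$ is a finite factor, pick $w\in\mathcal{M}$ with $w^*w=1-q$, $ww^*=1-p$, so that $V:=v+w$ is a unitary in $\mathcal{M}$. Using $v\sigma(y)=\pi(y)v$ together with $vq=v$, $wq=0$, $w^*v=0$, one checks $V^*\pi(y)Vq=\sigma(y)q$ for all $y\in M$, hence
\[
(V^*\pi(x)V-\sigma(x))\,q=0 .
\]
Moreover $|\eta|(1-q)=0$ gives $\eta(1-q)=0$, so $\widehat{1-q}=\widehat 1\cdot(1-q)=(\widehat 1-\eta)(1-q)$ in $L^2(\mathcal{M})$, whence
\[
\tau_\omega(1-q)^{1/2}=\norm{(\widehat 1-\eta)(1-q)}_2\leq\norm{\widehat 1-\eta}_2\leq\tfrac{\varepsilon}{\gamma},
\qquad\text{so}\qquad
\tau_\omega(1-q)\leq\Bigl(\tfrac{\varepsilon}{\gamma}\Bigr)^2<\frac{\varepsilon}{\gamma}\Bigl(2+\frac{\varepsilon}{\gamma}\Bigr).
\]

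\textbf{Transfer and contradiction.} Write $V=[V_k]_k$ and $q=[q_k]_k$; after the usual perturbations (replace a near-unitary by the unitary part of its polar decomposition, and a near-projection by a spectral projection), we may assume each $V_k\in\bM_{n_k}(\bC)$ is a unitary and each $q_k$ a projection, still with $[V_k]_k=V$ and $[q_k]_k=q$. Put $P_k:=1-q_k$. Then $\lim_{k\to\omega}\norm{(V_k^*Y^{(k)}V_k-Z^{(k)})(1-P_k)}_2=\norm{(V^*\pi(x)V-\sigma(x))q}_2=0$ and $\lim_{k\to\omega}\tr_{n_k}(P_k)=\tau_\omega(1-q)\leq(\varepsilon/\gamma)^2$. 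Hence for $\omega$-almost every $k$, and in particular for at least one $k$, both $\norm{(V_k^*Y^{(k)}V_k-Z^{(k)})(1-P_k)}_2<\delta$ and $\tr_{n_k}(P_k)<\frac{\varepsilon}{\gamma}(2+\frac{\varepsilon}{\gamma})+\delta$ hold, contradicting the choice of $Y^{(k)},Z^{(k)}$.

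I expect the conceptual heart — extracting the intertwiner $v$ from the central vector, which is precisely Popa's intertwining argument and is where the Kazhdan constant $\gamma$ enters quantitatively — to be routine. The parts demanding the most care are the manipulations with the possibly unbounded $L^2$-element $|\eta|$ in the polar decomposition step, and, in the final transfer, making sure that passing from near-unitaries and near-projections to honest ones preserves the quantitative bound on $\tr_{n_k}(P_k)$ and that the ``$\omega$-almost every $k$'' selection can be made simultaneously for all the relevant conditions.
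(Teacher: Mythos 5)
Your proof is correct and follows essentially the same ultraproduct/contradiction strategy as the paper, with the same conceptual core: building the $M$-$M$ bimodule $L^2(\mathcal{M})$ from two embeddings $\pi,\sigma$, applying the Kazhdan inequality to $\hat 1$, extracting the intertwining partial isometry $v$ from the polar decomposition of the near-central vector, extending it to a unitary, and transferring back. The one noteworthy difference is your trace estimate: you observe directly that $\eta(1-q)=0$ implies $\tau_\omega(1-q)^{1/2}=\norm{(\hat 1-\eta)(1-q)}_2\leq\norm{\hat 1-\eta}_2\leq\varepsilon/\gamma$, giving the sharper bound $\tau_\omega(1-q)\leq(\varepsilon/\gamma)^2$, whereas the paper bounds $\tr(P_{\ker a})$ via $\norm{a^*a-1}_1\leq\norm{a-1}_2(2+\norm{a-1}_2)$ and lands on the weaker $\frac{\varepsilon}{\gamma}(2+\frac{\varepsilon}{\gamma})$. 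Your route is cleaner and in fact proves a slightly stronger lemma than stated; both of course suffice for the claimed bound. The remaining mechanical points you flag (commutation of $|\eta|$ with $\sigma(M)$, the unitary extension $V=v+w$, lifting $V$ and $q$ to genuine unitaries/projections in the matrix algebras, and intersecting finitely many $\omega$-large sets) are all handled correctly or are standard, matching the paper's use of \cite[Lemma 5]{SorinInterwiningBBM} and its ``standard exercise'' remark.
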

	
	\begin{proof}
		Write
		\[
		\Delta(Y,Z,U,P) = \max\left( \norm{(U^*YU - Z)(1 - P)}_2 , \tr_N(P) - (\varepsilon/\gamma)(2 + \varepsilon/\gamma) \right).
		\]
		Fix $\varepsilon > 0$ and $\delta > 0$.  Suppose for contradiction that the claim fails.  Fix a sequence of neighborhoods $\cO_k$ shrinking to $\mu$.  By our assumption, for each $k$ there must exist arbitrarily large $N \in \bN$ such that the claim about $Y$, $Z \in \Gamma_R^{(n)}(\cO_k)$ fails.  Hence, we can choose $n_k \in \bN$ with $n_{k+1} > n_k$ such that there exist $Y^{(n_k)}$, $Z^{(n_k)} \in \Gamma_R^{(n_k)}(\cO_k)$ with $\norm{Y^{(n_k)} - Z^{(n_k)}}_2 \leq \varepsilon$ such that
		\[
		\liminf_{k \to \infty} \inf \{\Delta(Y^{(n_k)},Z^{(n_k)},U,P): U \text{ unitary, } P \text{ projection in } \mathbb{M}_n(\bC) \} \geq \delta.
		\]
		Now choose a free ultrafilter $\omega\in \beta \bN\setminus \bN$ so that $\lim_{n\to\omega}1_{\{n_{k}:k\in \bN\}}(n)=1.$
		For notational convenience, extend $Y^{(n_k)}$ and $Z^{(n_k)}$ to sequences $Y^{(n)}$ and $Z^{(n)}$ defined for all $n \in \bN$ (however, the values for $n \not \in \{n_k: k \in \bN\}$ will not matter).  Then
		\[
		\lim_{n \to \omega} \inf \{\Delta(Y^{(n)},Z^{(n)},U,P): U \text{ unitary, } P \text{ projection in } \mathbb{M}_n(\bC) \} \geq \delta.
		\]
		and
		\[
		\lim_{n \to \omega} \ell_{Y^{(n)}} = \lim_{n \to \omega} \ell_{Z^{(n)}} = \ell_{x}.
		\]
		Let
		\[
		(\mathcal{M},\tr) = \prod_{n \to \omega} (\mathbb{M}_n(\bC),\tr_n),
		\]
		that is, the tracial $\mathrm{W}^*$ ultraproduct of matrix algebras with respect to the ultrafilter $\omega$.
		Let $y = [Y^{(n)}]_{n\in\bN}$ and $z = [Z^{(n)}]_{n \in \bN}$ in $\mathcal{M}_{\sa}^d$.  Then $y$ and $z$ have the same law as $x$ and therefore, there are trace-preserving embeddings $\pi_1, \pi_2: (M,\tau) \to (\cM,\tr)$ such that $\pi_1(x) = y$ and $\pi_2(x) = z$.
		We can make $L^{2}(\cM,\tr)$ into an $M$-$M$ bimodule by letting $M$ act on the left by $\pi_{1}$ and on the right by $\pi_{2}$.
		Let $T: L^2(\cM,\tr) \to L^2(\cM,\tr)^d$ be the operator $T(a) = ya - az$.
		By definition of the Kazhdan constant $\gamma$, we have
		\[
		\norm{1 - P_{\ker(T)}(1)}_2 \leq \frac{1}{\gamma} \norm{T(1)}_2 = \frac{1}{\gamma} \norm{y - z}_2 \leq \frac{\varepsilon}{\gamma}.
		\]
		
		Let $a = P_{\ker(T)}(1) \in L^2(\mathcal{M},\tr)$, so that $ya = az$ by definition of $T$.  Here we consider $a$ as an affiliated operator to $\mathcal{M}$; (see \cite[Appendix F]{BrownOzawa2008}) and as such it has a polar decomposition $a = v|a|$ where $|a| = (a^*a)^{1/2} \in L^2(\mathcal{M},\tr)$ and $v$ is a partial isometry with range equal to the closure of the range of $a$ (see \cite[Theorem VII.32]{ReedUndSimon}).
		It is not hard to show that $yv = vz$ (see \cite[Lemma 5]{SorinInterwiningBBM}), and it follows that $v^*v$ commutes with $z$.
		Furthermore,
		\begin{align*}
			1 - \tr(v^*v) &= \tr(P_{\ker(a)}) \\
			&\leq \norm{a^*a - 1}_1 \\
			&\leq \norm{(a^* - 1)a}_1 + \norm{a-1}_1 \\
			&\leq \norm{a^*-1}_2 (1 + \norm{a - 1}_2) + \norm{a - 1}_2 \\
			&\leq \frac{\varepsilon}{\gamma} \left(2 +  \frac{\varepsilon}{\gamma} \right).
		\end{align*}
		Let $p = 1 - v^*v$.  Since $\mathcal{M}$ is a finite von Neumann algebra, there exists a partial isometry sending $1 - v^*v$ to $1 - vv^*$, and therefore, there is a unitary $u$ such that $v = u(1 - p)$.  Since $1 - p = v^*v$ commutes with $z$, we have
		\[
		(u^*yu - z)(1 - p) = u^*yu(1 - p) - u^*u(1 - p)z = u^*(yv - vz) = 0.
		\]
		It is a standard exercise to show that there exists a sequence of projections $(P^{(n)})_{n \in \bN}$ and a sequence of unitaries $(U^{(n)})_{n \in \bN}$ such that $p = [P^{(n)}]_{n \in \bN}$ and $u = [U^{(n)}]_{n \in \bN}$.  We also have
		\[
		\lim_{n \to \omega} \norm{((U^{(n)})^*Y^{(n)}U^{(n)} - Z^{(n)})(1 - P^{(n)}}_2 = \norm{(u^*yu - z)(1-p)}_2 = 0
		\]
		and
		\[
		\lim_{n \to \omega} \tr_n(P^{(n)}) = \tr(p) \leq \frac{\varepsilon}{\gamma} \left( 2 + \frac{\varepsilon}{\gamma} \right).
		\]
		Therefore,
		\[
		\lim_{n \to \omega} \Delta(Y^{(n)},Z^{(n)},U^{(n)},P^{(n)}) = 0,
		\]
		which contradicts our choice of $Y^{(n)}$ and $Z^{(n)}$, and thus the argument is complete.
	\end{proof}
	
	Now that we know that microstates are conjugate up to a small projection, in order to control the covering number of the microstate space, we need to estimate the covering numbers of the space of these projections.  We rely on an estimate of Szarek \cite{Szarek} on the covering numbers of Grassmannians, which we state in the following form.
	
	\begin{lemm} \label{lemm: Szarek}
		There exists a universal constant $C$ such that for $t \geq 0$,
		\begin{align*}
			K_{\varepsilon}(\{P \in \mathbb{M}_n(\bC) \text{ projection, } \tr_n(P) \leq t\},\|\cdot\|_{\infty}) &= K_{\varepsilon}(\{P \in \mathbb{M}_n(\bC) \text{ projection, } \tr_n(P) \geq 1 - t \}, \norm{\cdot}_\infty) \\
			&\leq (1 + nt) \left( \frac{C}{\varepsilon} \right)^{2n^2t}.
		\end{align*}
	\end{lemm}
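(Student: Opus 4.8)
The plan is to reduce to a metric-entropy estimate for a single complex Grassmannian and then sum over the admissible ranks. The claimed equality of the two covering numbers is immediate: the map $P \mapsto 1 - P$ is a bijection from $\{P \in \bM_n(\bC): P = P^* = P^2,\ \tr_n(P) \le t\}$ onto $\{P \in \bM_n(\bC): P = P^* = P^2,\ \tr_n(P) \ge 1 - t\}$, and it satisfies $\|(1 - P) - (1 - Q)\|_\infty = \|P - Q\|_\infty$, so it carries an $\varepsilon$-net for either set to one of the same cardinality for the other.

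For the inequality, write the set of projections of trace at most $t$ as the disjoint union $\bigsqcup_{j=0}^{\lfloor nt \rfloor} \cG_{n,j}$, where $\cG_{n,j}$ is the set of rank-$j$ orthogonal projections in $\bM_n(\bC)$ --- a copy of the Grassmannian of $j$-planes in $\bC^n$ with the metric $\rho(P,Q) = \|P - Q\|_\infty$. There are at most $1 + nt$ indices $j$, so it is enough to bound $K_\varepsilon(\cG_{n,j}, \|\cdot\|_\infty) \le (C/\varepsilon)^{2j(n-j)}$ for a universal $C$. Since $\cG_{n,j}$ has real dimension $2j(n-j)$, this is exactly Szarek's estimate for the metric entropy of Grassmann manifolds \cite{Szarek}. (Alternatively and self-containedly: each rank-$j$ projection equals $VV^*$ for $V$ in the complex Stiefel manifold $\{V \in \bM_{n,j}(\bC): V^*V = I_j\}$, which lies in the operator-norm unit ball of $\bM_{n,j}(\bC) \cong \bR^{2nj}$; since $V \mapsto VV^*$ is $2$-Lipschitz on that ball for $\|\cdot\|_\infty$, the image of a volumetric $(\varepsilon/2)$-net of the ball --- of cardinality at most $(C/\varepsilon)^{2nj}$ --- is an $\varepsilon$-net of $\cG_{n,j}$, giving the weaker but still sufficient exponent $2nj$.)

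Combining the pieces, and using that $2j(n-j) \le 2jn \le 2n^2 t$ for every $j \le nt$ --- together with the fact that $(C/\varepsilon)^a$ is nondecreasing in $a$ when $\varepsilon \le 1 \le C$ --- we obtain
\[
K_\varepsilon\bigl(\{P \in \bM_n(\bC) \text{ projection},\ \tr_n(P) \le t\},\ \|\cdot\|_\infty\bigr) \le \sum_{j=0}^{\lfloor nt \rfloor} K_\varepsilon(\cG_{n,j}, \|\cdot\|_\infty) \le (1 + nt)\left(\frac{C}{\varepsilon}\right)^{2n^2 t},
\]
while for $\varepsilon > 1$ the set has $\|\cdot\|_\infty$-diameter at most $1$, so the covering number is $1$ and nothing is to be proved. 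The one point that genuinely requires care is that the exponent must come out proportional to $t$ rather than to $n^2$ --- this is what makes the bound useful for small $t$ --- and it is secured precisely by truncating the union at rank $\lfloor nt \rfloor$ and using $j(n-j) \le jn \le n^2 t$; beyond this bookkeeping and pinning down the exact form of Szarek's theorem in the metric $\rho(P,Q)=\|P-Q\|_\infty$, I do not expect any real obstacle.
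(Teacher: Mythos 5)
Your proof takes essentially the same route as the paper's: decompose the set of low-trace projections as a union of at most $1 + nt$ fixed-rank Grassmannians, bound each piece by Szarek's covering estimate $(C/\varepsilon)^{2\ell(n-\ell)}$, and use $\ell(n-\ell) \le n^2 t$. The self-contained alternative you sketch---pushing forward a volumetric net of the operator-norm unit ball of $\bM_{n,j}(\bC)$ under the $2$-Lipschitz map $V \mapsto VV^*$---is a correct and pleasant bonus that replaces the external appeal to \cite{Szarek} at the cost of the slightly weaker exponent $2nj$, which is still enough for this lemma (though not for the tighter use of Szarek's bound elsewhere in the paper, e.g.\ in Lemma~\ref{lemm: norm switch}, where the $2\ell(n-\ell)$ form is needed to bound by $6n^2 t$).
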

	
	\begin{proof}
		First, the two covering numbers are equal because we can make the substitution $P \mapsto 1 - P$.  Thus, it suffices to estimate the first one.  For $\ell \in \bN\cup\{0\}$, let $G(\ell,n-\ell)$ be the subset of $\mathbb{M}_n(\bC)$ consisting of rank $\ell$ orthogonal projections. By \cite[Theorem 8 and Remark (ii) below it]{Szarek}, there is a uniform $C > 0$ so that
		\[
		K_{\varepsilon}(G(\ell,n-\ell), \norm{\cdot}_\infty)\leq \left(\frac{C}{\varepsilon}\right)^{2\ell(n-\ell)}.
		\]
		Note that
		\[
		\{P \in \mathbb{M}_n(\bC) \text{ projection } \tr_n(P) \leq t\} = \bigcup_{\substack{\ell \in [n] \\ \ell \leq nt}} G(\ell,n-\ell).
		\]
		The number of terms in the union is the ceiling of $n\min(1,t)$ which is bounded by $1 + nt$. The covering number of each of the individual sets can be bounded by $(C / \varepsilon)^{2\ell(n-\ell)} \leq (C / \varepsilon)^{2n^2t}$ since $\ell \leq nt$ and $n - \ell \leq n$.
	\end{proof}
	
	Next, we combine Lemma \ref{lemm: property T microstates} and Lemma \ref{lemm: Szarek} to obtain the following estimate, which bounds the $\eta$-covering numbers for microstate spaces in terms of the $\varepsilon$-covering numbers for $\eta \leq \epsilon$.  We will then conclude the proof of the theorem by iterating this estimate.
	
	\begin{lemm} \label{lemm: property T iterative estimate}
		Let $(M,\tau)$ be a tracial von Neumann algebra and suppose $x \in M_{\sa}^d$ is a Kazhdan tuple with $\norm{x}_\infty < R$ and with Kazhdan constant $\gamma$.  Let $0 < \eta \leq \varepsilon < \gamma/2$.  Then
		\[
		h_{R,\eta}(x) \leq h_{R,\varepsilon}(x) + \frac{12(d+1)\varepsilon}{\gamma} \log \frac{CRd^{1/2}}{\eta}
		\]
		where $C$ is a positive constant.
	\end{lemm}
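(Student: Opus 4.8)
The plan is to run one step of a covering-number iteration: I will bound $K_\eta^{\orb}$ of a microstate space for $x$ by $K_\varepsilon^{\orb}$ of a slightly larger microstate space, times a multiplicative error controlled by Lemma~\ref{lemm: property T microstates} (microstates close on $x$ are unitarily conjugate off a small projection) together with Szarek's estimate on Grassmannians (Lemma~\ref{lemm: Szarek}). Fix a weak$^*$-neighborhood $\mathcal{O}_0$ of $\ell_x$ and a small $\delta>0$. Since $2\varepsilon<\gamma$, apply Lemma~\ref{lemm: property T microstates} with parameters $2\varepsilon$ and $\delta$ to obtain a neighborhood $\mathcal{O}'$ and an $n_0$; put $\mathcal{O}_1=\mathcal{O}_0\cap\mathcal{O}'$. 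For each $n$ let $\Xi_n\subseteq\mathbb{M}_n(\bC)^d$ be a minimal orbital $\varepsilon$-cover of $\Gamma_R^{(n)}(\mathcal{O}_0)$, so $|\Xi_n|=K_\varepsilon^{\orb}(\Gamma_R^{(n)}(\mathcal{O}_0))$, and for $Z\in\Xi_n$ set $B_Z=\{Y\in\Gamma_R^{(n)}(\mathcal{O}_1):\ \|V^*YV-Z\|_2<\varepsilon\text{ for some unitary }V\}$. Since $\Gamma_R^{(n)}(\mathcal{O}_1)\subseteq\Gamma_R^{(n)}(\mathcal{O}_0)$, the family $\{B_Z\}_{Z\in\Xi_n}$ covers $\Gamma_R^{(n)}(\mathcal{O}_1)$, so $K_\eta^{\orb}(\Gamma_R^{(n)}(\mathcal{O}_1))\le\sum_{Z\in\Xi_n}K_\eta^{\orb}(B_Z)$, and it remains to bound $K_\eta^{\orb}(B_Z)$ uniformly in $Z$.

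Fix a nonempty $B_Z$ and, by conjugating some element of $B_Z$, choose a genuine microstate $Y_Z\in\Gamma_R^{(n)}(\mathcal{O}_1)\subseteq\Gamma_R^{(n)}(\mathcal{O}')$ with $\|Y_Z-Z\|_2<\varepsilon$. Given $Y\in B_Z$ with $\|V^*YV-Z\|_2<\varepsilon$, we have $\|V^*YV-Y_Z\|_2<2\varepsilon$ with both tuples in $\Gamma_R^{(n)}(\mathcal{O}')$, so for $n\ge n_0$ Lemma~\ref{lemm: property T microstates} produces a unitary $U$ and a projection $P$ with $\|(U^*V^*YVU-Y_Z)(1-P)\|_2<\delta$ and $\tr_n(P)<\tfrac{2\varepsilon}{\gamma}\bigl(2+\tfrac{2\varepsilon}{\gamma}\bigr)+\delta<\tfrac{6\varepsilon}{\gamma}+\delta=:t$, the last inequality using $\varepsilon<\gamma/2$. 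Writing $W=VU$, this reads $W^*YW=Y_Z+Q+E$ with $\|E\|_2<\delta$, where $Q:=(W^*YW-Y_Z)P$ satisfies $Q=QP$ (so it is right-supported on $\operatorname{ran}P$) and has operator norm $\le 2R$ in each coordinate. Thus every $Y\in B_Z$ is, after a unitary conjugation and up to $\|\cdot\|_2$-error $\delta$, of the form $Y_Z+Q$ for such a pair $(P,Q)$.

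It remains to cover the possible pairs. By Lemma~\ref{lemm: Szarek}, the projections with $\tr_n(P)\le t$ admit a $\rho_1$-net $\mathcal{P}$ in $\|\cdot\|_\infty$ of cardinality at most $(1+nt)(C/\rho_1)^{2n^2t}$. For each $P'\in\mathcal{P}$, a $d$-tuple of matrices right-supported on $\operatorname{ran}P'$ with coordinatewise operator norm $\le 2R$ has Hilbert--Schmidt norm of order $R\sqrt{nt}$ per coordinate and lives in a real vector space of dimension $2dn^2t$, so a standard volumetric bound furnishes a $\rho_2$-net $\mathcal{Q}_{P'}$ in $\|\cdot\|_2$ of cardinality at most $(CRd^{1/2}/\rho_2)^{2dn^2t}$. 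Given $Y\in B_Z$ with data $(P,Q)$, pick $P'\in\mathcal{P}$ with $\|P-P'\|_\infty<\rho_1$; then $QP'=QP'\cdot P'$ is right-supported on $\operatorname{ran}P'$ with $\|Q-QP'\|_2=\|QP(1-P')\|_2$ of order $Rd^{1/2}\rho_1$, and $QP'$ lies within $\rho_2$ of some $Q''\in\mathcal{Q}_{P'}$. Hence $\{Y_Z+Q'':P'\in\mathcal{P},\ Q''\in\mathcal{Q}_{P'}\}$ orbitally covers $B_Z$ at scale of order $Rd^{1/2}\rho_1+\rho_2+\delta$; choosing $\rho_1$ of order $\eta/(Rd^{1/2})$, $\rho_2$ of order $\eta$, and $\delta$ small makes this an orbital $\eta$-cover of $B_Z$ of cardinality at most $(1+nt)(CRd^{1/2}/\eta)^{2(d+1)n^2t}$ (absorbing universal constants into $C$). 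Summing over $\Xi_n$, dividing by $n^2$, and letting $n\to\infty$ (the factor $(1+nt)$ contributes $o(1)$) gives
\[
h_{R,\eta}(\mathcal{O}_1)\ \le\ h_{R,\varepsilon}(\mathcal{O}_0)+2(d+1)t\,\log\frac{CRd^{1/2}}{\eta}.
\]
Since $h_{R,\eta}(x)\le h_{R,\eta}(\mathcal{O}_1)$, taking the infimum over $\mathcal{O}_0$ (which yields $h_{R,\varepsilon}(x)$) and then letting $\delta\to 0$ (so $t\to 6\varepsilon/\gamma$) gives the claimed inequality.

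I expect the main obstacle to be the covering step in the third paragraph: one must exploit the \emph{low rank} of the error corner $Q$, since covering it inside the full matrix space $\mathbb{M}_n(\bC)^d$ would only give $e^{O(n^2)}$ and make the estimate useless; getting an exponent proportional to $\tr_n(P)\le 6\varepsilon/\gamma+\delta$ is exactly what makes the iteration in the proof of Theorem~\ref{thm: Property T} converge. Everything else is careful bookkeeping: arranging that $\mathcal{O}_0,\mathcal{O}',\mathcal{O}_1$ shrink to $\ell_x$ appropriately, and feeding Lemma~\ref{lemm: property T microstates} and Szarek's estimate into one another with the right parameters.
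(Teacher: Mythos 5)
Your proposal is correct and follows essentially the same strategy as the paper's proof: feed Lemma~\ref{lemm: property T microstates} (microstates close on $x$ are unitarily conjugate off a projection of small trace) into Lemma~\ref{lemm: Szarek} (Szarek's Grassmannian covering bound) together with a volumetric bound on the low-rank corner, to turn an orbital $\varepsilon$-cover into an orbital $\eta$-cover with a multiplicative error of $\exp\bigl(O(n^2 \cdot \tfrac{\varepsilon}{\gamma}(d+1)\log\tfrac{Rd^{1/2}}{\eta})\bigr)$. The differences from the paper are cosmetic bookkeeping: you organize the argument through the auxiliary orbital balls $B_Z$ and replace the cover center $Z$ by a nearby genuine microstate $Y_Z$, whereas the paper instead takes an orbital $(2\varepsilon)$-cover with centers inside the microstate space (the standard ``double the radius'' trick) and constructs the cover $\Omega' = \bigcup_{Q\in\Xi}(\Omega + E_Q)$ directly; you also carry the $+\delta$ in $t$ through to a final $\delta\to 0$ limit, while the paper fixes $\delta$ small enough at the outset so that $t = 6\varepsilon/\gamma$ holds from the start. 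All the estimates (the identity $Q - QP' = QP(1-P')$ with $\norm{P(1-P')}_\infty \le \norm{P-P'}_\infty$, the dimension count $2dn\ell \le 2dn^2 t$, and the absorption of $(1+nt)$ into the $o(n^2)$ term) check out.
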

	
	\begin{proof}
		Fix $\delta > 0$ with $\delta < \eta/3$ and $(2\varepsilon / \gamma)(2 + 2\varepsilon / \gamma) + \delta < 6\varepsilon / \gamma$ (the latter being possible because $\varepsilon < \gamma$).  Let $\cO$ and $n_0$ be as in Lemma \ref{lemm: property T microstates} for the constants $\delta$ and $2\varepsilon$ (rather than $\delta$ and $\varepsilon$), and assume that $n \geq n_0$.  For ease of notation, let $t = 6 \varepsilon / \gamma$.
		
		By Lemma \ref{lemm: Szarek}, choose a set $\Xi$ of projections of rank less than or equal to $nt$ such that every projection $P$ with $\tr_n(P) \leq t$ satisfies $\norm{P - Q}_\infty < \eta / (6Rd^{1/2})$, such that $|\Xi| \leq (1 + nt)(6C_1Rd^{1/2}/\eta)^{2n^2 t}$.  For each $Q \in \Xi$ with rank $\ell$, the space $\mathbb{M}_n(\bC)^d Q$ has real dimension $2n \ell d \leq 2 n^2 t d$ and therefore there exists $E_Q \subseteq \mathbb{M}_n(\bC)^d Q$ that $(\eta/3,\norm{\cdot}_2)$-covers $\overline{B}_{\mathbb{M}_n(\bC)^d Q,\norm{\cdot}_2}(0,2Rd^{1/2})$ and satisfies
		\[
		|E_Q| \leq \left( \frac{6C_2Rd^{1/2}}{\eta} \right)^{2n^2 t d}.
		\]
		Finally, fix $\Omega \subseteq \Gamma_R^{(n)}(\cO)$ that orbitally $(2\varepsilon,\norm{\cdot}_2)$-covers $\Gamma_R^{(n)}(\cO)$ and satisfies $|\Omega| \leq K_{\varepsilon}(\Gamma_R^{(n)}(\cO),\norm{\cdot}_2)$.
		
		We claim that $\Omega' = \bigcup_{Q \in \Xi} (\Omega + E_Q)$ is an $(\eta,\norm{\cdot}_2)$-covering of $\Gamma_R^{(n)}(\cO)$.  To see this, suppose $Y \in \Gamma_R^{(n)}(\cO)$.  Then there exists $Z \in \Omega$ and a unitary $V$ such that $\norm{Y - VZV^*}_2 <  2\varepsilon$, hence $\norm{V^*YV - Z}_2 < 2 \varepsilon$.  By our choice of $n_0$ and $\cO$, there exists a projection $P$ and a unitary $U$ such that
		\[
		\norm{(U^*V^*YVU - Z)(1 - P)}_2 < \delta < \frac{\eta}{3}, \qquad
		\tr_n(P) < (2\varepsilon/\gamma)(2 + 2\varepsilon/\gamma) + \delta \leq t.
		\]
		To simplify notation, let us rename $VU$ to $U$, so that $\norm{(U^*YU - Z)(1 - P)}_2 < \eta/3$.  Fix a projection $Q \in \Xi$ such that $\norm{P - Q}_\infty < \eta / (6Rd^{1/2})$.  Note that
		\[
		\norm{(U^*YU - Z)(P-Q)}_2 \leq \norm{U^*YU - Z}_2 \norm{P - Q}_\infty \leq 2Rd^{1/2} \norm{P - Q}_\infty < \frac{\eta}{3}.
		\]
		Moreover,
		\[
		\norm{(U^*YU - Z)Q}_2 \leq d^{1/2} \norm{U^*YU - Z}_\infty \norm{Q}_\infty \leq 2R d^{1/2},
		\]
		hence there exists $W \in E_Q$ such that
		\[
		\norm{(U^*YU - Z)Q - W}_2 < \frac{\eta}{3}
		\]
		It follows that
		\begin{align*}
			\norm{U^*YU - (Z + W)}_2 &\leq \norm{(U^*YU - Z)(1 - P)}_2 + \norm{(U^*YU - Z)(P - Q)}_2 + \norm{(U^*YU - Z)Q - W}_2 \\
			&< \frac{\eta}{3} + \frac{\eta}{3} + \frac{\eta}{3} = \eta.
		\end{align*}
		This shows that $\Omega'$ orbitally $(\eta,\norm{\cdot}_2)$-covers $\Gamma_R^{(n)}(\cO)$ as desired.
		
		Therefore,
		\begin{align*}
			K_\eta^{\orb}(\Gamma_R^{(n)}(\cO)) &\leq |\Omega| \sum_{Q \in \Xi} |E_Q| \\
			&\leq K_{\varepsilon}^{\orb}(\Gamma_R^{(n)}(\cO)) (1 + nt) \left( \frac{6C_1Rd^{1/2}}{\eta}\right)^{2n^2t} \left( \frac{6C_2Rd^{1/2}}{\eta} \right)^{2n^2 td} \\
			&\leq K_{\varepsilon}^{\orb}(\Gamma_R^{(n)}(\cO)) (1 + nt) \left(\frac{ C_3Rd^{1/2}}{\eta}\right)^{2n^2t(d+1)}
		\end{align*}
		where $C_3 = 6 \max(C_1,C_2)$.  After applying $\limsup_{n \to \infty} (1/n^2) \log$ to both sides, we obtain
		\[
		h_{R,\eta}(\cO) \leq h_{R,\varepsilon}(\cO) + 2t(d+1) \log \frac{C_3 R d^{1/2}}{\eta}
		\]
		Substitute back $t = 6 \varepsilon / \gamma$ and letting $\cO$ shrink to $\ell_x$, we obtain the asserted result.
	\end{proof}
	
	Now we can finish the proof of Theorem \ref{thm: Property T} by iterating Lemma \ref{lemm: property T iterative estimate}.
	
	\begin{proof}[Proof of Theorem \ref{thm: Property T}]
		Let $(M,\tau)$ be a tracial $\mathrm{W}^*$-algebra with a Kazhdan tuple $x = (x_1,\dots,x_d)$ with Kazhdan constant $\gamma$ and $\norm{x}_\infty < R$.  For $\varepsilon < \min(\gamma,1)$, we can take $\eta = \varepsilon^2$ in the previous lemma and obtain
		\[
		h_{R,\varepsilon^2}(x) \leq h_{R,\varepsilon}(x) + \frac{12(d+1) \varepsilon}{\gamma} \log \frac{CRd^{1/2}}{\varepsilon^2}.
		\]
		Iterating this estimate, we have that for $\varepsilon < \min(1,\gamma)$,
		\[
		h_{R,\varepsilon^{2^k}}(x) \leq h_{R,\varepsilon}(x) + \frac{12(d+1)}{\gamma} \sum_{j=0}^{k-1} \varepsilon^{2^j} \log \frac{CRd^{1/2}}{\varepsilon^{2^{j+1}}}.
		\]
		Fix $\varepsilon \in (0,\min(1,\gamma))$.  As $k \to \infty$, we have $h_{R,\varepsilon^{2^k}}(x) \to h(M)$.  Therefore,
		\[
		h(M) \leq h_{R,\varepsilon}(x) + \frac{12(d+1)}{\gamma} \sum_{j=0}^\infty \varepsilon^{2^j}\left(\log (CRd^{1/2}) + 2^{j+1} \log \frac{1}{\varepsilon} \right).
		\]
		Since the sum on the right-hand side converges, we have $h(M) < \infty$, and thus $M$ is strongly $1$-bounded.
	\end{proof}

	\section{Direct sums, amplification, and strong $1$-boundedness}\label{sec:d sums and matrix amp}
	
	The main goals of this section are to prove Proposition \ref{prop: dichotomy} and show that our Theorem \ref{thm: Property T} implies Jung and Shlyakhtenko's result that tracial von Neumann algebras with property (T) have free entropy dimension at most $1$ for every finite generating set \cite{JungS} using previously known facts about free entropy dimension.  To these ends, we study the behavior of $1$-bounded entropy under direct sums and matrix amplification.
	
	\subsection{Direct sums and strong $1$-boundedness} \label{subsec: direct sums}
	
	In order to prove Propositions \ref{prop: dichotomy} and \ref{prop: amplification}, we must understand the behavior of $h$ under direct sums of tracial von Neumann algebras.  Let $J$ be a countable index set and $(\lambda_j)_{j \in J} \in (0,\infty)^J$ such that $\sum_{j \in J} \lambda_j = 1$, and for each $j \in J$, let $(M_j,\tau_j)$ be a tracial von Neumann algebra.  Then
	\[
	\bigoplus_{j \in J} \lambda_j(M_j,\tau_j)
	\]
	is defined as the tracial von Neumann algebra $(M,\tau)$ where
	\[
	M = \{(x_j)_{j \in J}: \sup_j \norm{x_j} < \infty\}
	\]
	and
	\[
	\tau((x_j)_{j \in J}) = \sum_{j \in J} \lambda_j \tau_j(x_j).
	\]
	It is shown in \cite[Proposition A.13(i)]{Hayes2018} that
	\begin{equation} \label{eq: direct sum upper bound}
		h(M,\tau) \leq \sum_{j \in J} \lambda_j^2 h(M_j,\tau_j).
	\end{equation}
	
	Our goal in this section is to give a corresponding lower bound for $h(M,\tau)$ (see Lemma \ref{lemm: h direct sum}).  As with many results in free entropy theory, we run into the issue that we do not know whether using a $\liminf$ instead of $\limsup$ in the definition of $h$ would yield the same result.  Thus, we will need to use the ``$\liminf$ version'' of $1$-bounded entropy $\underline{h}$ in our formula.  It is also convenient for the proof to use the description of $1$-bounded entropy in terms of relative microstates rather than unitary orbits.  We therefore recall the following definitions.
	
	\begin{defn}
		Let $(M,\tau)$ be a tracial von Neumann algebra, $l\in \bN$ and $y\in M_{\sa}^{l}$. A \emph{microstates sequence} for $y$ is a sequence $(Y^{(n)})\in \prod_{n}\bM_{n}(\bC)_{\sa}^{l}$ with
		\[\sup_{n}\|Y^{(n)}\|_{\infty}<\infty,\]
		\[\ell_{Y^{(n)}}\to_{n\to\infty}\ell_{y}.\]
		
		Suppose that $d\in \bN$, that $x\in M_{\sa}^{d}$ and that $Y^{(n)}$ is a microstates sequence for $y$ as above. Fix
		\[R>\max(\|x\|_{\infty},\sup_{n}\|Y^{(n)}\|_{\infty})\]
		For $n\in \bN$, and $\mathcal{O}$ a weak$^{*}$-neighborhood of $\ell_{x,y}$ we let
		\[\Gamma^{(n)}_{R}(\mathcal{O}|A^{(n)})=\{B\in \bM_{n}(\bC)_{\sa}^{r}:(B,Y^{(n)})\in \Gamma^{(n)}(\mathcal{O})\}.\]
	\end{defn}

	We let $K_{\varepsilon}(\Omega,\|\cdot\|_{2})$ be the minimal cardinality of a subset of $\bM_{n}(\bC)^{d}$ which $(\varepsilon,\norm{\cdot}_2)$-covers $\Omega$.
	Let $(M,\tau)$ be a tracial von Neumann algebra, $d,s,l\in \bN$, and let $x\in M_{\sa}^{d},y\in M_{\sa}^{s},a\in M_{\sa}^{l}$. Suppose that $(A^{(n)})_{n}$ is a microstates sequence for $a$.
	Fix $R\in (0,\infty)$ with $R>\max(\|x\|_{\infty},\|y\|_{\infty},\sup_{n}\|A^{(n)}\|_{\infty})$. For $n\in \bN$, let $\pi_{r}\colon M_{n}(\bC)_{\sa}^{d+s}\to M_{n}(\bC)_{\sa}^{d}$ be the projection onto the first $d$-coordinates.
	For a weak$^{*}$-neighborhood $\mathcal{O}$ Of $\ell_{x,y,a}$, and $n\in \bN$, we set
	\[\Gamma^{(n)}_{R}(x|A^{(n)}:\mathcal{O})=\pi_{r}(\Gamma^{(N)}_{R}(\mathcal{O}|A^{(k)})).\]
	We will heuristically refer to the collection of spaces $(\Gamma^{(n)}_{R}(x|A^{(n)}:\mathcal{O}))_{n,\mathcal{O}}$ as the microstates spaces for $x$ relative to $a$ in the presence of $y$.
	Roughly speaking, the usual microstates spaces  correspond to embeddings of $W^{*}(x,a)$ into an ultraproduct of matrices. As in Section \ref{subsec:ultraproduct} \emph{relative} microstates spaces in the presence correspond to embeddings of $W^{*}(x)$ into ultraproducts of matrices which both restrict to the embedding of $W^{*}(a)$ given by $(A^{(n)})_{n}$, and have an extension to $W^{*}(x,y,a)$.
	
	We  now define
	\begin{align*}
		\underline{h}_{R,\varepsilon}(x|(A^{(n)})_{n}:\mathcal{O})&:=\liminf_{n\to\infty}\frac{1}{n^{2}}\log K_{\varepsilon}(\Gamma_{R}^{(n)}(x|A^{(n)};\mathcal{O}),\|\cdot\|_{2}), \\
		\underline{h}_{R,\varepsilon}(x|(A^{(n)})_{n}:y) &:= \inf_{\mathcal{O}} \underline{h}_{R,\varepsilon}(x|(A^{(n)})_{n}:\mathcal{O}),
	\end{align*}
	where the infimum is over all weak$^{*}$-neighborhoods $\mathcal{O}$ of $\ell_{x,y,a}$.  If $W^{*}(a)$ is diffuse and hyperfinite, and if $W^{*}(a)\subseteq W^{*}(x)$, we then define
	\[\underline{h}(x:y) := \sup_{\varepsilon > 0} \underline{h}_{R,\varepsilon}(x|(A^{(n)})_{n}:y).
	\]
	By standard methods, it can be shown that $\underline{h}(x:y)$ does not depend upon the choice of $R$, and this justifies dropping it from the notation. If $W^{*}(a)$ is diffuse and hyperfinite, then the above quantity is independent of $a$ and the choice of $(A^{(n)})_{n}$ by the same arguments in \cite[Lemma A.5, Corollary A.6]{Hayes2018}. In \cite{Hayes2018}, this is only shown when $a$ is a single element which generates a diffuse, abelian von Neumann algebra, but the same argument works for any tuple generating a diffuse, hyperfinite von Neumann algebra.   By the same arguments as in \cite[Theorem A.9]{Hayes2018} one can show that if $d',s'\in \bN$ and $x'\in M_{\sa}^{d'},y\in M_{\sa}^{s'}$ satisfies $W^{*}(x')=W^{*}(x)$, $W^{*}(x,y)=W^{*}(x',y')$, and if $W^{*}(x),W^{*}(y)$ are diffuse, then $\underline{h}(x:y)=\underline{h}(x':y')$.
	Thus if $M_{1}\leq M_{2}\leq M$ and $M_{1},M_{2}$ are finitely generated and diffuse, we can set
	\[\underline{h}(M_{1}:M_{2})=\underline{h}(x:y),\]
	where $x_{j},j=1,2$ are finite self-adjoint tuples in $M$ with $W^{*}(x_{1})=M_{1}$, $W^{*}(x_{1},x_{2})$. We then set
	\[\underline{h}(M_{1}:M)=\inf_{Q}\underline{h}(M_{1}:Q),\]
	where the infimum is over all finitely generated subalgebras $Q$ of $M$ which contain $M_{1}$. Finally, for $N\leq M$, we let
	\[\underline{h}(N:M)=\sup_{P}\underline{h}(P:M),\]
	where the supremum is over all finitely generated, diffuse subalgebras $P$ of $N$. We call $\underline{h}(N:M)$ the \emph{lower $1$-bounded entropy of $N$ in the presence of $M$}, and we set $\underline{h}(M)=\underline{h}(M:M)$.
	The above definition using relative microstates can intuitively be thought of describing the $1$-bounded entropy as a measurement of how many embeddings $N$ has into an ultraproduct of matrices which both restrict to a given embedding of $W^{*}(a)$ and have an extension to $M$. The definition of $1$-bounded entropy using unitary conjugation orbits can be thought of as a measurement of how many embeddings $M$ has into an ultraproduct of matrices modulo unitary conjugation. It is a fact that when $N = M$, these measurements are the same \cite[Lemma A.5]{Hayes2018}.
	
	We remark that the properties of $h$ given in Section \ref{sec:preliminary} are satisfied, \emph{mutatis mutandis}, for $\underline{h}$ with the exception that the analogue of item \ref{I:subadditivity of 1 bdd ent} is that
	\[
	\underline{h}(N_{1}\vee N_{2}:M)\leq \underline{h}(N_{1}:M)+h(N_{2}:M)\textnormal{ if $N_{1}\cap N_{2}$ is diffuse}.
	\]
	The lower bound for the $1$-bounded entropy of direct sums is as follows.
	
	\begin{lemm} \label{lemm: h direct sum}
		Suppose that $J$ is a countable index set, and that $((M_{j},\tau_{j}))_{j\in J}$ are diffuse tracial von Neumann algebras. Let $(\lambda_{j})_{j\in J}$ be positive numbers with $\sum_{j}\lambda_{j}=1$ and set $(M,\tau)=\bigoplus_{j}\lambda_{j}(M_{j},\tau_{j})$.
		Suppose that $N_{j}\leq M_{j},j\in J$ are diffuse and set $N=\bigoplus_{j}N_{j}$. Then
		\[
		\underline{h}(N:M)\geq \sum_{j}\lambda_{j}^{2}\underline{h}(N_{j}:M_{j}),\]
		\[h(N:M)\geq \lambda_{i}^{2}h(N_{i}:M_{i})+\sum_{j\ne i}\lambda_{j}^{2}\underline{h}(N_{j}:M_{j})\textnormal{ for all $i\in J$.}
		\]
	\end{lemm}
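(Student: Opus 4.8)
\emph{Overview.} The plan is to realize microstates for $M=\bigoplus_j\lambda_j M_j$ as essentially block‑diagonal matrices with a $j$‑th block of size $n_j$, where $\sum_j n_j=n$ and $n_j/n\to\lambda_j$, the $j$‑th block being a microstate for $M_j$; and conversely to see that a direct sum of microstates for the $M_j$ in these proportions is a microstate for $M$. Since covering numbers multiply over a product of blocks while $\tfrac1{n^2}=\sum_j\tfrac{n_j^2}{n^2}\cdot\tfrac1{n_j^2}$ with $\tfrac{n_j^2}{n^2}\to\lambda_j^2$, this essentially bijective correspondence between microstates and tuples of microstates is exactly what produces the coefficients $\lambda_j^2$.

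\emph{Reductions.} First I would dispose of the case that some $M_j$ fails to embed into an ultrapower of $\mathcal R$: then $M$ does too, and all four quantities in the statement equal $-\infty$. So assume every $M_j$, hence $M$, is Connes‑embeddable, whence each $\underline h(N_j:M_j)$, $h(N_i:M_i)$ is nonnegative. Next I would reduce to the case that $J$ is finite and all the $N_j$, $M_j$ are finitely generated; this is routine from the definitions, using that $\underline h(\cdot:M)$ and $h(\cdot:M)$ are suprema over finitely generated diffuse subalgebras and infima over finitely generated intermediate subalgebras, that such subalgebras may be enlarged so as to contain the central projections $p_j$ (the units of the $M_j$) and hence to split as direct sums over $J$, that a countable direct sum of copies of $L(\bZ)$ is singly generated (e.g. $\bigoplus_j c_j u_j$ for distinct $c_j\downarrow 0$ and $u_j$ a Haar unitary in the $j$‑th copy) so that the hyperfinite tail summands contribute $0$, and that the passage from finite to countable $J$ is a truncation as in the proof of \cite[Proposition A.13]{Hayes2018}.

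\emph{Core construction.} With $J$ finite and everything finitely generated, since each $N_j$ is diffuse it contains a Haar unitary $v_j$; I would take the hyperfinite reference tuple $a$ to list the real and imaginary parts of the $v_j$, so $W^*(a)=\bigoplus_j W^*(v_j)$ is diffuse hyperfinite and $\underline h$, $h$ may be computed relative to $a$. Because $v_jv_k=0$ for $j\neq k$ and $\sum_j v_j^*v_j=\sum_j p_j=1$, a perturbation‑and‑conjugation, together with the invariance of $\underline h$, $h$ under the choice of hyperfinite microstate sequence, lets me fix a microstate sequence for $a$ of the honestly block‑diagonal form $A^{(n)}=\bigoplus_j A_j^{(n_j)}$ with $\sum_j n_j^{(n)}=n$, $n_j^{(n)}/n\to\lambda_j$, and each $n_j^{(n)}$ running through all sufficiently large integers. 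Choosing block‑aligned generating tuples $x=(x_j)_j$, $y=(y_j)_j$ for $N$, $M$ with $v_j$ among the $x_j$, the key fact is: if the neighborhoods $\mathcal O_j$ are small enough in terms of a neighborhood $\mathcal O$ of $\ell_{x,y,a}$ and $B_j\in\Gamma_R^{(n_j)}(x_j|A_j^{(n_j)}:\mathcal O_j)$, then $\bigoplus_j B_j\in\Gamma_R^{(n)}(x|A^{(n)}:\mathcal O)$, because the joint law of a block‑diagonal tuple is the $(n_j/n)$‑weighted convex combination of the laws of its blocks, which converges to the law of $(x,y,a)$, and direct‑summing extensions of the $B_j$ to microstates for the $M_j$ gives an extension of $\bigoplus_j B_j$ to a microstate for $M$. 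Then, taking maximal $\varepsilon$‑separated sets $S_j\subseteq\Gamma_R^{(n_j)}(x_j|A_j^{(n_j)}:\mathcal O_j)$ (so $|S_j|\geq K_\varepsilon(\Gamma_R^{(n_j)}(x_j|A_j^{(n_j)}:\mathcal O_j),\|\cdot\|_2)$), the family $\{\bigoplus_j s_j:(s_j)\in\prod_j S_j\}$ lies in $\Gamma_R^{(n)}(x|A^{(n)}:\mathcal O)$ and, since $\|\bigoplus_j s_j-\bigoplus_j s_j'\|_2^2=\sum_j(n_j/n)\|s_j-s_j'\|_2^2\geq\tfrac12(\min_k\lambda_k)\varepsilon^2$ whenever $s\neq s'$, it is $\delta$‑separated for $\delta=\tfrac12(\min_k\lambda_k)^{1/2}\varepsilon$; hence
\[
\frac1{n^2}\log K_{\delta/2}\bigl(\Gamma_R^{(n)}(x|A^{(n)}:\mathcal O),\|\cdot\|_2\bigr)\ \geq\ \sum_j\frac{n_j^2}{n^2}\cdot\frac1{n_j^2}\log K_\varepsilon\bigl(\Gamma_R^{(n_j)}(x_j|A_j^{(n_j)}:\mathcal O_j),\|\cdot\|_2\bigr).
\]
Taking $\liminf_{n\to\infty}$ (using $n_j/n\to\lambda_j$, superadditivity of $\liminf$ over a finite sum, and that $n_j^{(n)}$ sweeps the integers) gives the first inequality after letting $\mathcal O\to\ell_{x,y,a}$ — which shrinks all the $\mathcal O_j$ simultaneously, so the infimum of the finite sum equals the sum of infima — and then $\varepsilon\to0$ (hence $\delta\to0$), by monotone convergence. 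For the second inequality I would, for the single index $i$ and each fixed $\mathcal O$, pass to a subsequence of $n$ along which $n_i^{(n)}$ realizes the $\limsup$ defining $h_{R,\varepsilon}(x_i|A_i:\mathcal O_i)$, apply the displayed bound along that subsequence, and take $\limsup_{n\to\infty}$, bounding the $i$‑th summand by its limit and the others below by their $\liminf$; the same $\inf_{\mathcal O}$, $\sup_\varepsilon$, and (outermost) $\sup$‑over‑subalgebras/$\inf$‑over‑intermediate‑subalgebras bookkeeping then yields $h(N:M)\geq\lambda_i^2 h(N_i:M_i)+\sum_{j\neq i}\lambda_j^2\underline h(N_j:M_j)$.

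\emph{Main obstacle.} The conceptual idea — block‑diagonalizing microstates — is easy; the real work is organizational. The delicate points are (a) arranging the hyperfinite reference microstate sequence to be \emph{exactly} block‑diagonal with block sizes in the prescribed proportions and summing to $n$, which rests on a perturbation argument and on the independence of $\underline h$, $h$ from the choice of hyperfinite microstates; and (b) the several nested $\liminf$'s, $\limsup$'s along subsequences, infima over weak$^*$‑neighborhoods, suprema over $\varepsilon$, and the infima/suprema over intermediate finitely generated subalgebras implicit in $\underline h(N:M)$ and $h(N:M)$ — all of which must be interchanged with the finite sum in the correct direction while keeping track of how the induced $\mathcal O_j$ and the block sizes $n_j^{(n)}$ vary. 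I expect (b) to be the most error‑prone part, much as in \cite{Hayes2018}.
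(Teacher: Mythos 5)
Your proposal is correct and takes essentially the same route as the paper: compute the (lower) $1$-bounded entropy via relative microstates with respect to a block-diagonal hyperfinite reference tuple whose block proportions track $(\lambda_j)_j$, note that direct sums of separated sets of relative microstates for the $N_j$ in the presence of the $M_j$ give separated sets for $N$ in the presence of $M$, and obtain the $\lambda_j^2$ factors from the $n_j^2/n^2$ scaling of $\frac{1}{n^2}\log K_\varepsilon$. The only cosmetic differences are that the paper uses, per summand, a single self-adjoint element with diffuse spectrum together with the central projections $z_j$ (rather than Haar unitaries) as the hyperfinite reference, and handles $|J|=2$ followed by induction instead of finite $J$ directly.
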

	
	\begin{proof}
		If there is a $j \in J$ so that $M_{j}$ is not Connes-embeddable, then both sides are $-\infty$, and there is nothing to prove. So we will assume without further comment throughout the proof that all algebras involved are Connes-embeddable.
		The proofs for $h$ and $\underline{h}$ are nearly identical, so we will only give the proof for $\underline{h}$. First, consider the case where $J = \{1,2\}$.  Fix tracial von Neumann algebras $(M_{j},\tau_{j}),j=1,2$ and diffuse $N_{j}\leq M_{j}$. Without loss of generality, we may assume that $N_{j},M_{j}$ are finitely generated, so let $r_{j},s_{j}\in \bN,j=1,2$ and let $x_{j}\in (N_{j})_{\sa}^{r_{j}}$,$y_{j}\in (M_{j})_{\sa}^{s_{j}}$ be generating sets.
		Let $z_{j},j=1,2$ be the identity of $M_{j}$, regarded as a projection in $M_{1}\oplus M_{2}$. Let $a_{j}\in (N_{j})_{\sa},j=1,2$ be an element with diffuse spectrum. We will use microstates relative to $b=(a_{1}\oplus 0,0\oplus a_{1},z_{1},z_{2})$ to compute the entropy. So let $C^{(n)}\in \bM_{n}(\bC)_{\sa}^{4}$ be a microstates sequence for $b$. To ease notation set $A_{j}^{(n)}=C^{(n)}_{j},j=1,2$ and $Z^{(n)}_{j}=C^{(n)}_{j+2},j=1,2$. We may, and will, assume that
		\begin{itemize}
			\item $Z^{(n)}_{1},Z^{(n)}_{2}$ are diagonal projections with $Z^{(n)}_{1}+Z^{(n)}_{2}=1$ for all $n$,
			\item $n\tr_{n}(Z^{(n)}_{1})=\lfloor{\tau(z_{1})n\rfloor}$ for all $n$,
			\item $Z^{(n)}_{j}A^{(n)}_{j}Z^{(n)}_{j}=A^{(n)}_{j}$ for all $n$ and all $j=1,2,$
			\item $A^{(n)}_{j}$ are diagonal for all $n,$ and all $j=1,2$.
		\end{itemize}
		Set $b_{j}=(a_{j},z_{j}),j=1,2,$ and let $B^{(n)}_{j}=(A^{(n)}_{j},\id_{\ell_{n,j}})$ where we regard $A^{(n)}_{j}$ as an element of $M_{\ell_{n,j}}$ where $\ell_{n,j}=n\tau(Z^{(n)}_{j})$ Set $x=(x_{1}\oplus 0,0\oplus x_{2})$, and $y=(y_{1}\oplus 0,0\oplus y_{2})$. Finally, fix \[
		R>\max(\|x\|_{\infty},\|y\|_{\infty},\sup_{n}\|B^{(n)}\|_{\infty}).\]
		Given a neighborhood $\mathcal{O}$ of $\ell_{x,y,b}$ we may find neighborhoods $\mathcal{O}_{j},j=1,2$ of $\ell_{x_{j},y_{j},b_{j}}$ so that
		\[\Gamma^{(n)}_{R}(x|B^{(n)}:\mathcal{O})\supseteq \Gamma^{(\lfloor{\tau(z_{1})n\rfloor})}_{R}(x_{1}|B^{(\lfloor{\tau(z_{1})n\rfloor}))}_{1}:\mathcal{O}_{1})\oplus \Gamma^{(\lceil{\tau(z_{2})n\rceil})}_{R}(x_{2}|B^{(\lceil{\tau(z_{2})n\rceil})}_{2};\mathcal{O}_{2})\textnormal{ for all sufficiently large $n$.}\]
		Given $\varepsilon>0$, let $\Omega_{n,j}\subseteq \Gamma^{(\ell_{n,j})}(x_{j}|B^{(\ell_{n,j})}_{j};\mathcal{O}_{j})$ be  $\varepsilon$-separated with respect to $\|\cdot\|_{2}$ (meaning that distinct points in $\Omega_{n,j}$ are distance at least $\varepsilon$ from each other). Then for all sufficiently large $n$, we have that $\Omega_{n,1}\oplus \Omega_{n,2}$ is an $\varepsilon$-separated subset of $\Gamma^{(n)}(x|B^{(n)};\mathcal{O})$ and thus has cardinality at most $K_{\varepsilon/2}(\Gamma^{(n)}_{R}(x|B^{(n)};\mathcal{O}),\|\cdot\|_{2})$.
		
		Thus for all $\varepsilon>0$, and for all sufficiently large $n$,
		\[K_{\varepsilon/2}(\Gamma^{(n)}_{R}(x|B^{(n)}:\mathcal{O}),\|\cdot\|_{2})\geq K_{\varepsilon}(\Gamma^{(\lfloor{\tau(z_{1})n\rfloor})}_{R}(x_{1}|B^{(n)}_{1}:\mathcal{O}_{1}),\|\cdot\|_{2})K_{\varepsilon}( \Gamma^{(\lceil{\tau(z_{2})n\rceil})}_{R}(x_{2}|B^{(n)}_{2}:\mathcal{O}_{2}),\|\cdot\|_{2}).\]
		Since $\{\lfloor{\tau(z_{1})n\rfloor}:n\in \bN\}$ and $\{\lceil{\tau(z_{2})n\rceil}:n\in \bN\}$ have finite complement in $\bN$, and since limit infimums are super additive, the above inequality shows that for all $\varepsilon>0$ we have
		\[\underline{h}_{R,\varepsilon/2}(x|(B^{(n)})_{n}:\mathcal{O})\geq \tau(z_{1})^{2}\underline{h}_{R,\varepsilon}(x_{1}|B^{(n)}_{1}:y_{1})+\tau(z_{2})^{2}\underline{h}_{R,\varepsilon}(x_{2}|B^{(n)}_{2}:y_{2}).\]
		Letting $\mathcal{O}$ decrease to $\ell_{x,y,b}$ and letting $\varepsilon\to 0$ completes the proof of the case where $J = \{1,2\}$.
		
		The case where $J$ is finite follows by induction.  Now suppose that $J$ is infinite and assume without loss of generality that $J = \bN$.  Let $(N_{j}\leq M_{j})_{j\in J}$ be as in the statement of the lemma. For each $j\in J$, fix a diffuse abelian $A_{j}\leq N_{j}$. For $r\in \bN$, let
		\[N_{\leq r}=\bigoplus_{j=1}^{r}N_{j}\oplus \bigoplus_{j=r+1}^{\infty}A_{j}.\]
		Since $\bigoplus_{j=r+1}^{\infty}A_{j}$ has $1$-bounded entropy zero, we have by the case of finite $J$ that
		\[\underline{h}(N_{\leq r}:M)\geq \sum_{j=1}^{r}\lambda_{j}^{2}\underline{h}(N_{j}:M_{j}).\]
		Since $N_{\leq r}$ are an increasing sequence of subalgebras of $N$ and $\bigvee N_{\leq r}=N$, we have that
		\[
		\underline{h}(N:M)=\sup_{r}\underline{h}(N_{\leq r}:M)\geq \sum_{j=1}^{\infty}\lambda_{j}^{2}\underline{h}(N_{j}:M_{j}). \qedhere
		\]
	\end{proof}
	
	\subsection{Amplification and strong $1$-boundedness}\label{sec:amp it up}
	
	This section will show that $h(M^t) = t^{-2} h(M)$ for $\mathrm{II}_1$ factor $M$ (Proposition \ref{prop: amplification}), and then conclude the proof of Proposition \ref{prop: dichotomy}.  To prove Proposition \ref{prop: amplification} for $t \in (0,1)$, we consider a projection $p \in M$ with trace $t$, and analyze the $1$-bounded entropy of certain subalgebras of $M$ related to $pMp$.  At a key point, we use the fact that if $N$ is a von Neumann subalgebra of $M$, then for $h(N:M)$ to equal $h(M)$, it is sufficient that every sequence of matricial microstates for $N$ extends to a sequence of matricial microstates for $M$.  We state this property more precisely as follows.
	
	\begin{defn}
		Let $(M,\tau)$ be a tracial von Neumann algebra, and $N\leq M$. We say the inclusion $N\leq M$ has the \emph{microstates extension property} if for every free ultrafilter $\omega\in\beta\bN\setminus\bN$, for every $N_{0}\leq N$,$M_{0}\leq M$ so that $N_{0},M_{0}$ have separable predual and $N_{0}\leq M_{0}$, and for every trace-preserving embedding $\Theta\colon N_{0}\to \prod_{k\to\omega}\bM_{k}(\bC)$ into a tracial ultraproduct of matrix algebras $\bM_{k}(\bC)$, there exists a trace-preserving embedding $\widetilde{\Theta}\colon M_{0}\to \prod_{k\to\omega}\bM_{k}(\bC)$ with $\widetilde{\Theta}\big|_{M_{0}}=\Theta$.
	\end{defn}

	The following lemma shows that the microstates extension property can be formulated in terms of microstates spaces. The proof is an exercise in understanding the definitions and is left to the reader.
	
	\begin{lemm}\label{lem: alternate definition}
		Let $(M,\tau)$ be a tracial von Neumann algebra, and let $N\leq M$. Then the inclusion $N\leq M$ has the microstates extension property if and only if for every $d,s\in \bN$,  every $x\in N_{\sa}^{d},y\in M_{\sa}^{s}$, every $R>\max(\|x\|_{\infty},\|y\|_{\infty})$ and  every neighborhood $\mathcal{V}$ of $\ell_{x,y}$ there is a neighborhood $\mathcal{O}$ of $\ell_{x}$ so that
		\[\Gamma^{(n)}_{R}(\mathcal{O})\subseteq \pi_{d}(\Gamma^{(n)}_{R}(\mathcal{V}))\mbox{ for all sufficiently large $n$}\]
		where $\pi_{d}\colon \bM_{k}(\bC)_{\sa}^{d+s}\to \bM_{k}(\bC)_{\sa}^{d}$ is the projection onto the first $d$ coordinates.
	\end{lemm}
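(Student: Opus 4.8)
The plan is to push both implications through the dictionary between microstates sequences and embeddings into matrix ultraproducts recalled in \S\ref{subsec:ultraproduct}: a bounded self-adjoint sequence $W^{(n)}\in\bM_n(\bC)_{\sa}^k$ with $\lim_{n\to\omega}\ell_{W^{(n)}}=\ell_w$ induces a unique trace-preserving embedding $W^*(w)\to\prod_{n\to\omega}\bM_n(\bC)$ sending $w_j$ to $[W_j^{(n)}]_n$, every such embedding arises this way, and a representing sequence may always be truncated by $f_R(t)=\max(-R,\min(R,t))$ to have operator norm at most $R$ as soon as the element it represents has norm $<R$, without changing the ultraproduct element. With this in hand, each direction becomes bookkeeping with neighborhoods of laws and a well-chosen ultrafilter.

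For the implication from the microstates extension property to the displayed microstates condition, I would argue by contraposition. Suppose the inclusion of microstate spaces fails for some $d,s$, some $x\in N_{\sa}^d$, $y\in M_{\sa}^s$, some $R>\max(\norm{x}_\infty,\norm{y}_\infty)$, and some neighborhood $\cV$ of $\ell_{x,y}$. I would take a decreasing basic sequence of neighborhoods $\cO_k\downarrow\{\ell_x\}$, pick $n_1<n_2<\cdots$ and $X^{(n_k)}\in\Gamma_R^{(n_k)}(\cO_k)\setminus\pi_d(\Gamma_R^{(n_k)}(\cV))$, extend $X^{(n)}$ arbitrarily (with norm $\le R$) to all $n$, and fix $\omega\in\beta\bN\setminus\bN$ with $\{n_k:k\in\bN\}\in\omega$. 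Then $\lim_{n\to\omega}\ell_{X^{(n)}}=\ell_x$, so this sequence gives an embedding $\Theta\colon W^*(x)\to\prod_{n\to\omega}\bM_n(\bC)$; applying the microstates extension property with $N_0=W^*(x)$, $M_0=W^*(x,y)$ produces an extension $\widetilde\Theta$, and writing $\widetilde\Theta(y_j)=[Y_j^{(n)}]_n$ with $Y_j^{(n)}$ truncated to norm $\le R$ gives $\lim_{n\to\omega}\ell_{(X^{(n)},Y^{(n)})}=\ell_{x,y}$. Hence $(X^{(n)},Y^{(n)})\in\Gamma_R^{(n)}(\cV)$ for $\omega$-almost every $n$, so $X^{(n)}\in\pi_d(\Gamma_R^{(n)}(\cV))$ for $\omega$-almost every $n$, contradicting the choice of the $X^{(n_k)}$.

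For the converse I would start from a trace-preserving embedding $\Theta\colon N_0\to\prod_{k\to\omega}\bM_k(\bC)$ with $N_0\le N$, $M_0\le M$ separable and $N_0\le M_0$. After rescaling, choose self-adjoint generating sequences $(x_j)_{j\ge1}$ of $N_0$ and $(x_j)_{j\ge1}\cup(y_j)_{j\ge1}$ of $M_0$ of norm $\le 1/2$ and set $R=1$; represent $\Theta(x_j)=[X_j^{(k)}]_k$ with $\norm{X_j^{(k)}}_\infty\le R$, so that $\lim_{k\to\omega}\ell_{(X_1^{(k)},\dots,X_m^{(k)})}=\ell_{(x_1,\dots,x_m)}$ for every $m$. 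For each $m$, apply the microstates condition to $(x_1,\dots,x_m)$, $(y_1,\dots,y_m)$ and the neighborhood $\cV_m$ of $\ell_{(x_1,\dots,x_m,y_1,\dots,y_m)}$ pinning all words of length $\le m$ to within $1/m$, obtaining $\cO_m$ and $n_m$ with $\Gamma_R^{(k)}(\cO_m)\subseteq\pi_m(\Gamma_R^{(k)}(\cV_m))$ for $k\ge n_m$; then $T_m=\{k\ge n_m:(X_1^{(k)},\dots,X_m^{(k)})\in\Gamma_R^{(k)}(\cO_m)\}\in\omega$. Diagonalizing with $T_m'=T_1\cap\cdots\cap T_m$ and $d(k)=\max\{m\le k:k\in T_m'\}$ (and $d(k)=0$ otherwise) gives $\{k:d(k)\ge m\}\in\omega$ for every $m$, i.e. $d(k)\to\infty$ along $\omega$; for $k$ with $d(k)=m\ge 1$ pick $(Y_1^{(k)},\dots,Y_m^{(k)})$ completing $(X_1^{(k)},\dots,X_m^{(k)})$ into $\Gamma_R^{(k)}(\cV_m)$, and set $Y_j^{(k)}=0$ for $j>m$ (and all $j$ when $d(k)=0$). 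Since $d(k)\to\infty$, every word $p$ in finitely many generators satisfies $\lim_{k\to\omega}\tr_k(p(X^{(k)},Y^{(k)}))=\tau(p(x,y))$, so $x_j\mapsto[X_j^{(k)}]_k$, $y_j\mapsto[Y_j^{(k)}]_k$ extends to a trace-preserving embedding $\widetilde\Theta\colon M_0\to\prod_{k\to\omega}\bM_k(\bC)$ that restricts to $\Theta$ on the generators of $N_0$, hence on all of $N_0$.

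The one genuinely nontrivial point is the ultrafilter diagonalization in the converse, which is what glues the finite-tuple extensions into a single joint microstates sequence for the full generating set of $M_0$; the remaining ingredients — the passage between microstates sequences and ultraproduct embeddings, and the harmless truncations keeping everything inside $\Sigma_{d,R}$ — are routine, which is why the statement is left to the reader in the paper.
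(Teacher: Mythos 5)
Your argument is correct and carries out, in full detail, exactly the kind of translation between ultraproduct embeddings and microstate sequences that the paper has in mind when it leaves this as an exercise. Both directions are handled properly: the forward direction via the usual "choose a bad sequence, concentrate an ultrafilter on it, and extract a violated ultraproduct embedding" argument, and the converse via representing the given embedding by a bounded microstate sequence, applying the microstates condition tuple by tuple, and diagonalizing with the function $d(k)$ to glue the finite extensions into a single joint microstate sequence; the verification that $d(k)\to\infty$ along $\omega$ and hence that mixed moments of bounded length converge to those of $(x,y)$ is right, and the standard truncation-by-$f_R$ trick takes care of the norm bounds.
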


	The following lemma gives some examples and also explain the relevance to $1$-bounded entropy.
	\begin{lemm} \label{lem:MEP}  ~
		\begin{enumerate}[(i)]
			\item Suppose that $(M,\tau)$ is a tracial von Neumann algebra, that $N\leq Q\leq M$, that $N$ is diffuse, and that the inclusion $Q\leq M$ has the microstates extension property. Then $h(N:Q)=h(N:M)$ and $\underline{h}(N:Q) = \underline{h}(N:M)$. \label{item:give yourself more room}
			\item If $(M,\tau)$ is Connes embeddable, and $N\leq M$ is hyperfinite, then the inclusion $N\leq M$ has the microstates extension property.  \label{item:Connes did it}
			\item If $(M_{j},\tau_{j}),j=1,2$ are Connes-embeddable, then $M_{1}*1\leq M_{1}*M_{2}$ has the microstates extension property.  \label{item:Voiculescu did it}
		\end{enumerate}
	\end{lemm}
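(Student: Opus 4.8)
The plan is to handle the three parts separately: for (i) I would pass through the microstates reformulation of the extension property (Lemma \ref{lem: alternate definition}), and for (ii) and (iii) I would argue directly with embeddings into a matrix ultraproduct $\mathcal M = \prod_{k\to\omega}\mathbb M_k(\bC)$.

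\emph{Part (i).} One inequality is automatic: by the monotonicity property (item \ref{I:monotonicity of 1 bounded entropy} of the list in \S\ref{sec:S1B background}, which also holds for $\underline h$) applied with $N \le N \le Q \le M$ we get $h(N:M) \le h(N:Q)$ and $\underline h(N:M) \le \underline h(N:Q)$. For the reverse, since $h(N:Q) = \sup_P h(P:Q)$ over finitely generated diffuse $P \le N$ (and likewise for the other three quantities), it suffices to prove $h(P:Q) \le h(P:M)$ and $\underline h(P:Q) \le \underline h(P:M)$ for such $P$. Fix a self-adjoint generating tuple $x$ of $P$, so that $h(P:Q) = \inf_w h(x:w)$ over finite self-adjoint tuples $w$ in $Q$ and $h(P:M) = \inf_{w'} h(x:w')$ over finite self-adjoint tuples $w'$ in $M$. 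I claim $h(x:w) \le h(x:w')$ for \emph{every} such $w$ and $w'$; taking the infimum over $w'$ and then over $w$ gives the desired inequality (and the same for $\underline h$). To prove the claim, observe that a matricial microstate for $x$ that extends to a microstate for $(x,w)$ automatically extends to one for $(x,w,w')$ -- because $Q \le M$ has the microstates extension property, $W^*(x,w) \le Q \le M$, and $W^*(x,w,w') \le M$ -- and hence, after projecting, to a microstate for $(x,w')$. Via Lemma \ref{lem: alternate definition} this says $\pi_d(\Gamma_R^{(n)}(\mathcal O)) \subseteq \pi_d(\Gamma_R^{(n)}(\mathcal O'))$ for all large $n$, for suitably matched shrinking neighborhoods $\mathcal O$ of $\ell_{x,w}$ and $\mathcal O'$ of $\ell_{x,w'}$; comparing $\|\cdot\|_2$-covering numbers and taking $\limsup\tfrac{1}{n^2}\log$ (resp.\ $\liminf$) yields $h(x:w) \le h(x:w')$ (resp.\ the statement for $\underline h$).

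\emph{Part (ii).} Let $\omega$, separable $N_0 \le N$ and $M_0 \le M$ with $N_0 \le M_0$, and a trace-preserving $\Theta: N_0 \to \mathcal M$ be given. Since $M$ is Connes-embeddable and $M_0$ is separable there is a trace-preserving embedding $\Psi: M_0 \to \mathcal M$. Now $\Theta$ and $\Psi|_{N_0}$ are two trace-preserving embeddings of $N_0$ into $\mathcal M$, and $N_0$ is hyperfinite, hence amenable; by the theorem of Connes recalled in the introduction (see \cite{Connes,JungTubularity}), all embeddings of a separable amenable tracial von Neumann algebra into a matrix ultraproduct are unitarily conjugate, so $U\Psi(a)U^* = \Theta(a)$ for all $a \in N_0$ for some $U \in \mathcal U(\mathcal M)$. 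Then $\widetilde\Theta := U\Psi(\cdot)U^*$ is the required extension.

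\emph{Part (iii).} Given separable $N_0 \le M_1$, separable $M_0 \le M_1 * M_2$ with $N_0 \le M_0$, and $\Theta: N_0 \to \mathcal M$, first note that each generator of $M_0$ lies in the von Neumann algebra generated by countably many elements of $M_1$ and of $M_2$, so one may choose separable subalgebras $B_i \le M_i$ with $N_0 \le B_1$ and $M_0 \le B_1 * B_2$; it is then enough to extend $\Theta$ to $B_1 * B_2$. Since $M_2$, hence $B_2$, is Connes-embeddable, Voiculescu's theorem on asymptotic freeness of Haar-unitary conjugates of random matrices lets one embed $B_2$ into $\mathcal M$ with image free from any prescribed separable subalgebra (conjugate a microstate sequence for $B_2$ by Haar-distributed unitaries). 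It then suffices to produce a trace-preserving embedding $\Theta_1: B_1 \to \mathcal M$ extending $\Theta$ with $\Theta_1(B_1)$ in free position relative to this copy of $B_2$; the universal property of the free product then assembles $\Theta_1$ and the embedding of $B_2$ into an extension $\widetilde\Theta$ of $\Theta$, and restriction to $M_0$ finishes. The step I expect to be the main obstacle is precisely this last one -- extending $\Theta$ across the inclusion $N_0 \le B_1$ inside the Connes-embeddable algebra $M_1$, in free position with the chosen copy of $B_2$. I would attempt it by a Voiculescu-type gluing of matricial microstates: approximate a generating tuple of $B_1$ by matrices, conjugate so as to agree with $\Theta$ on the $N_0$-part, and force asymptotic freeness from the microstates witnessing the embedding of $B_2$; keeping track of these compatibility conditions simultaneously is where the care is needed.
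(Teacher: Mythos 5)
Parts (i) and (ii) are correct and match the paper's intent: the paper dismisses (i) as an exercise from Lemma~\ref{lem: alternate definition}, and your expanded argument (one direction by monotonicity, the other by observing that a microstate extending to $(x,w)$ extends via MEP to $(x,w,w')$ and hence, after projection, to $(x,w')$) is exactly the intended one; your proof of (ii) is essentially verbatim the paper's (Connes embeddability to get some $\Psi$ on $M_0$, then Connes/Jung uniqueness of embeddings of separable amenable algebras to conjugate $\Psi|_{N_0}$ to $\Theta$).

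For (iii) your route diverges from the paper's, and the step you flag as the obstacle is precisely what the paper does not attempt. After reducing without loss of generality to $M_1, M_2$ with separable preduals, the paper treats the given $\Theta$ as an embedding of \emph{all} of $M_1$ (the case $N_0 = M_1$, $M_0 = M_1 * M_2$), picks any trace-preserving $\Theta_2\colon M_2\to\mathcal M$ (Connes embeddability), and uses Voiculescu's asymptotic freeness together with Popa's ultraproduct result to produce a Haar unitary $u\in\mathcal M$ free from $\Theta(M_1)\vee\Theta_2(M_2)$; then $\Theta(M_1)$ and $u\Theta_2(M_2)u^*$ are freely independent and the universal property of the free product yields the extension. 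No intermediate extension across $N_0\le B_1$ is ever needed. Your instinct that such an intermediate extension would be a genuine obstruction is correct: an embedding of $N_0$ need not extend across a general inclusion $N_0\le B_1$ inside $M_1$ (for instance with $B_1 = L(\bF_2)\,\overline\otimes\,L(\bF_2)$ and $N_0 = L(\bF_2)\otimes 1$, one has $h(N_0:B_1)\le h(B_1)\le 0$ while $h(N_0)=\infty$, so the set of microstates of $N_0$ extending to $B_1$ is a proper, exponentially smaller, subset). So the part of the argument you identified as the hard step is not something to push through; the paper's argument simply handles the case $N_0 = M_1$, which is the one relevant to the subsequent applications.
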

	
	We remark that (\ref{item:give yourself more room}) makes sense from our intuitive description of $1$-bounded entropy. The quantity $h(N:Q)$ (resp. $h(N:M)$) is supposed to be a measurement of ``how many" embeddings there are of $N$ into an ultraproduct of matrices which have an extension to $Q$ (resp. $M$). If $Q\leq M$ has the microstates extension property, then every embedding of $Q$  extends to $M$ and thus the quantities $h(N:Q)$,$h(N:M)$ should be the same.
	
	\begin{proof}
		
		(\ref{item:give yourself more room}): This is an exercise from Lemma \ref{lem: alternate definition}.
		
		(\ref{item:Connes did it}): Without loss of generality we may, and will, assume that $N,M$ have separable predual. Fix a free ultrafilter $\omega\in\beta\bN\setminus\bN$. Let $\Theta\colon N\to \prod_{k\to\omega}\bM_{k}(\bC)$ be a trace-preserving embedding. Since $M$ is Connes-embeddable, there exists a trace-preserving embedding $\Psi\colon M\to \prod_{k\to\omega}\bM_{k}(\bC)$. By \cite{Connes, JungTubularity} there exists a unitary $u\in \prod_{k\to\omega}\bM_{k}(\bC)$ so that $\Psi\big|_{N}=\ad(u)\circ \Theta$. Set $\widetilde{\Theta}=\ad(u^{*})\circ \Psi$. Then $\widetilde{\Theta}$ is the desired extension.
		
		(\ref{item:Voiculescu did it}):  Without loss of generality we may, and will, assume that $M_{1},M_{2}$ have separable predual. Our desired result is now a consequence of \cite[Theorem 2.4]{Voiculescu1998}, \cite[Corollary 0.2]{PoppArg}. Namely, fix a free ultrafilter $\omega\in\beta\bN\setminus\bN$, and let $\mathcal{M}=\prod_{k\to\omega}\bM_{k}(\bC)$. Then, by assumption, there exists trace-preserving embeddings $\Theta_{j}\colon M_{j}\to \mathcal{M}$,$j=1,2$. By \cite[Theorem 2.4]{Voiculescu1998}, \cite[Corollary 0.2]{PoppArg} there exists a unitary $u\in \mathcal{M}$ which is Haar distributed and freely independent of $\Theta_{1}(M_{1})\vee \Theta_{2}(M_{2})$. Then $\Theta_{1}(M_{1}),u\Theta_{2}(M_{2})u^{*}$ are freely independent, and this produces an extension $\widetilde{\Theta}\colon M_{1}*M_{2}\to \mathcal{M}$ of $\Theta$.
	\end{proof}
	
	Now we are ready to prove Proposition \ref{prop: amplification}, and in fact we also give the analogous result for $\underline{h}$.  We remark that the point $h(M^t) \leq t^{-2} h(M)$ for $t \in (0,1)$ was already shown in \cite[Propostion A.13(ii)]{Hayes2018}.
	
	\begin{prop}\label{prop: amplification 2}
		Suppose that $M$ is a $\textrm{II}_{1}$-factor and $\tau$ is its canonical trace. For $t\in (0,\infty)$, let $M^{t}$ be the $t^{th}$ compression of $M$. Then
		\[
		h(M^{t}) = \frac{1}{t^{2}}h(M)
		\quad \text{and} \quad \underline{h}(M^{t}) = \frac{1}{t^{2}}\underline{h}(M).
		\]
	\end{prop}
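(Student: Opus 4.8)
The plan is to reduce to $t\in(0,1)$ and then compare $h(M)$ with $h(pMp)$ for a projection $p$ of trace $t$, by passing through a subalgebra of $M$ assembled from copies of $pMp$.

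First I would record that amplification is multiplicative, $(M^s)^t = M^{st}$ and $M^1 = M$. Granting the statement for every $\mathrm{II}_1$ factor and every exponent in $(0,1)$, the case $t>1$ follows by applying it to the factor $M^t$ with exponent $1/t\in(0,1)$: since $(M^t)^{1/t}=M$, the identity $h((M^t)^{1/t}) = t^2 h(M^t)$ rearranges to $h(M^t) = t^{-2}h(M)$, and likewise for $\underline h$. If $M$ is not Connes-embeddable, neither is $M^t$ and both sides are $-\infty$, so assume $M$ Connes-embeddable. Fix $t\in(0,1)$ and $p\in M$ with $\tau(p)=t$, so $M^t=pMp$. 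Put $n=\lfloor 1/t\rfloor$ and write $1 = p_1+\dots+p_n+r$ with $p_1=p$, each $p_i$ Murray--von Neumann equivalent to $p$, and $\tau(r)=1-nt\in[0,t)$. Let
\[
N = \Bigl(\bigoplus_{i=1}^n p_i M p_i\Bigr)\oplus rMr \subseteq M,
\]
a diffuse unital subalgebra in which each corner $p_iMp_i\cong pMp$ carries the normalized trace of $pMp$. A partial isometry realizing $p_1\sim p_i$ extends to a unitary $u_i\in\mathcal U(M)$ with $u_i(p_1Mp_1)u_i^* = p_iMp_i$, and a partial isometry carrying a sub-corner of $p_1Mp_1$ onto $rMr$ extends to a unitary $u_{n+1}$; each $u_i$ lies in $\mathcal{N}_M^{wq}(N)$ (because $u_iNu_i^*\cap N$ contains a diffuse corner), and $N$ together with $u_2,\dots,u_{n+1}$ generates $M$. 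Hence by item \ref{item:preservation under normalizers}, $h(M)=h(N:M)$, and similarly $\underline h(M)=\underline h(N:M)$.

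It then remains to prove $h(N:M)=t^2 h(pMp)$ and $\underline h(N:M)=t^2\underline h(pMp)$. For the bound ``$\le$'': a relative microstate for $N$ in the presence of $M$, passed through the correspondence between microstate spaces and matrix-ultraproduct embeddings (Section \ref{subsec:ultraproduct}, Lemma \ref{lem:MEP}), yields an embedding $N\hookrightarrow\prod_{k\to\omega}\bM_k(\bC)$ that extends to one of $M$; inside $M$ the summands $p_iMp_i$ are mutually conjugate via the $u_i$ and $rMr$ is conjugate to a sub-corner of $p_1Mp_1$, so — by a standard ultraproduct-to-finite-stage argument as in the proof of Lemma \ref{lemm: property T microstates} — every such relative microstate at matrix size $k$ is, after a small perturbation and a unitary conjugation, of the form $(Y,\dots,Y,E_0YE_0)$ for a single microstate $Y$ for $pMp$ of size $\approx tk$ (the linking partial isometries wash out once one also conjugates by block-diagonal unitaries, which keeps both sides counted orbitally). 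Consequently $K_\varepsilon^{\orb}$ of the relative microstate space at size $k$ is bounded by $K_{\varepsilon'}^{\orb}$ of the microstate space for $pMp$ at size $\lfloor tk\rfloor$, and applying $\tfrac{1}{k^2}\log$ and $\limsup_k$ (resp. $\liminf_k$) produces the factor $t^2$. For ``$\ge$'': conversely, given a microstate $Y$ for $pMp$ of size $m$, one reconstructs a microstate for $M$ of size $k\approx m/t$ by placing matricial conjugates of $Y$ in $n$ orthogonal blocks, a compression of $Y$ in a remaining block, and matricial versions of the $u_i$; since $M$ is recovered from $p_1Mp_1$ and these partial isometries, this tuple has the correct law, its restriction to $N$ is a relative microstate, and inequivalent orbits of $Y$ give inequivalent orbits of relative microstates, whence $h(N:M)\ge t^2 h(pMp)$ and $\underline h(N:M)\ge t^2\underline h(pMp)$ (for $h$ one may instead obtain ``$\ge$'' from the known bound $h(M^t)\le t^{-2}h(M)$ of \cite{Hayes2018} together with $h(M)=h(N:M)$). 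Combining the two inequalities and letting $\varepsilon\to0$ gives $h(M)=t^2 h(M^t)$ and $\underline h(M)=t^2\underline h(M^t)$ for $t\in(0,1)$, which with the reduction in the first paragraph completes the proof; the direct-sum estimates (inequality \eqref{eq: direct sum upper bound} and Lemma \ref{lemm: h direct sum}) are what make the comparison between the relative microstate space of the direct-sum algebra $N$ and the microstate space of $pMp$ precise.

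The main obstacle is the inequality $h(N:M)\le t^2 h(pMp)$: one must show that a relative microstate for the direct-sum algebra $N$ which extends to $M$ necessarily has its $n$ summand blocks simultaneously approximately unitarily conjugate (and its remainder block a sub-corner), and then organize the covering-number estimate so that the ``linking'' unitaries contribute nothing and both sides stay counted orbitally — all while carefully handling the change of matrix size $k\leftrightarrow tk$ and the roundings coming from $\lfloor tk\rfloor$ and from $\tau(r)=1-nt$.
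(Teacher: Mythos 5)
Your overall scaffolding matches the paper's: reduce to $t\in(0,1)$ by writing $M=(M^t)^{1/t}$, pick a projection $p$ of trace $t$, build a unital diffuse subalgebra $N\leq M$ containing $pMp$, use the weak-quasi-normalizer axiom to get $h(M)=h(N:M)$ (and $\underline h(M)=\underline h(N:M)$), and then compare $h(N:M)$ with $t^2h(pMp)$. The reduction to $t\in(0,1)$, the claim $u_i\in\mathcal N_M^{wq}(N)$, and the fact that $N$ together with the linking unitaries generates $M$ are all fine. But your choice of $N$ is not the paper's, and the difference is exactly what creates the gap you flag at the end.

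You take $N=\bigoplus_{i=1}^n p_iMp_i\oplus rMr$, i.e., $n\approx1/t$ orthogonal copies of $pMp$ plus a remainder corner. The direct-sum upper bound \eqref{eq: direct sum upper bound} then gives only $h(N)\leq nt^2 h(pMp)+(1-nt)^2 h(rMr)$, which overshoots by roughly a factor of $n$; none of the summands are hyperfinite, so nothing in the direct-sum estimate cancels. You try to repair this by arguing that extensibility to $M$ forces the $n$ blocks to be simultaneously approximately conjugate and that ``the linking unitaries wash out once one also conjugates by block-diagonal unitaries, which keeps both sides counted orbitally.'' But the quantity you are trying to bound, $h(N:M)$ (and $\underline h(N:M)$), is defined by plain $\norm{\cdot}_2$-covering of relative microstate spaces anchored to a diffuse hyperfinite subalgebra of $N$ — \emph{not} by orbital covering. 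Orbital and anchored covering coincide only when $N=M$. So even if the collapse-modulo-orbits claim were proved, it does not directly control $K_\varepsilon(\Gamma_R^{(n)}(x\,|\,A^{(n)}:\mathcal O),\norm\cdot_2)$, and the conjugating freedom in each block is exactly the kind of thing that shows up in the anchored covering number unless the anchor is chosen to quotient it out. That is an entire additional argument that is not in your proposal.

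The paper avoids this problem by a more surgical choice: it fixes a hyperfinite $\mathrm{II}_1$ subfactor $R\leq M$ with $p\in R$ and sets $N=pMp+(1-p)R(1-p)$. Three things then work together. First, the complementary summand $(1-p)R(1-p)$ is hyperfinite, so the direct-sum upper bound gives $\underline h(N)\leq t^2\underline h(pMp)+(1-t)^2\cdot 0=t^2\underline h(pMp)$ on the nose, and combined with $\underline h(N\vee R)\leq\underline h(N)+h(R)=\underline h(N)$ this gives one inequality with no collapse argument needed. Second, $N\cap R=pRp+(1-p)R(1-p)$ is diffuse and hyperfinite, so the inclusion $N\leq M$ has the microstates extension property (built by first extending along $N\cap R\leq R$, then assembling $\widetilde\Theta$ from the partial isometries in $R$), giving $\underline h(N:M)=\underline h(N)$. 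Third, $R$ itself contains the linking partial isometries, so $N\vee R=M$. The direct-sum \emph{lower} bound (Lemma \ref{lemm: h direct sum}) then yields the other inequality. In short, the paper makes the ``extra'' piece of $N$ hyperfinite rather than another copy of $pMp$, which is precisely what makes both the direct-sum estimate and the MEP argument go through cleanly — whereas your $N$ has no hyperfinite buffer, so the direct-sum bound gives nothing and the rigidity claim would have to be proved from scratch in a framework (anchored covering) where it is not obviously true.
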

	
	\begin{proof}
		We first handle the case when $t \in (0,1)$. Fix a hyperfinite $\textrm{II}_{1}$-subfactor $R$ of $M$. Let $p\in R$ be a projection with $\tau(p)=t$. Observe that $M^{t}\cong pMp$. Let $N=pMp+(1-p)R(1-p)$
		Since $R$ is a factor, we may find partial isometries $v_{1},\cdots,v_{n}\in R$ so that $v_{j}^{*}v_{j}\leq p$ and $\sum_{j}v_{j}v_{j}^{*}=1$. Thus
		\[
		N\vee R\supseteq W^{*}(v_{1},v_{2},\cdots,v_{n})\vee (pMp+(1-p))=M.\]
		So
		\begin{equation}\label{eqn:compression splitting}
			N\vee R=M.
		\end{equation}
		\emph{Step 1. We show that $\underline{h}(M^{t})\geq \frac{1}{t^{2}}\underline{h}(M)$.}
		By (\ref{eqn:compression splitting}) and the fact that $N\cap R=pRp+(1-p)R(1-p)$ is diffuse, we have
		\[\underline{h}(M)=\underline{h}(N\vee R)\leq \underline{h}(N)+h(R)=\underline{h}(N)\leq t^{2}\underline{h}(pMp)+(1-t)^{2}h((1-p)R(1-p))=t^{2}\underline{h}(pMp)=t^{2}\underline{h}(M^{t}).\]
		Here we use the analogues of \cite[Lemma A.12 and Proposition A.13 (i)]{Hayes2018} for \underline{h}.

		\emph{Step 2. We prove that $\underline{h}(M^{t})\leq \frac{1}{t^{2}}\underline{h}(M)$.}
		We start with the following claim.
		
		\emph{Claim. $N\leq M$ has the microstates extension property.} Suppose that $\omega$ is a free ultrafilter on the natural numbers, and let $\mathcal{M}=\prod_{k\to\omega}\bM_{k}(\bC)$ be the tracial ultraproduct of $\bM_{k}(\bC)$. Let $\Theta\colon N\to \mathcal{M}$ be a trace-preserving embedding. Since $N\cap R$ is hyperfinite, we know that $N\cap R\leq R$ has the microstates extension property by Lemma \ref{lem:MEP} (\ref{item:Connes did it}).  So there exists a trace-preserving embedding $\Psi\colon R\to  \mathcal{M}$ with $\Psi\big|_{N\cap R}=\Theta\big|_{N\cap R}$. Let $v_{1},\cdots,v_{n}$ be as before the proof of Step 1. Define $\widetilde{\Theta}\colon M\to \mathcal{M}$ by
		\[\widetilde{\Theta}(x)=\sum_{i,j}\Psi(v_{i})\Theta(v_{i}^{*}xv_{j})\Psi(v_{j})^{*}.\]
		Observe that $v_{i}^{*}Mv_{j}\subseteq pMp\subseteq N$ for all $i,j$, so the formula above makes sense. For all $1\leq i,j,k,l\leq n$ and all $x,y\in M$ we have
		\begin{align*}
			\Psi(v_{i})\Theta(v_{i}^{*}xv_{j})\Psi(v_{j})^{*}\Psi(v_{k})\Theta(v_{k}^{*}yv_{l})\Psi(v_{l})^{*}&=\Psi(v_{i})\Theta(v_{i}^{*}xv_{j})\Psi(v_{j}^{*}v_{k})\Theta(v_{k}^{*}yv_{l})\Psi(v_{l})^{*}\\
			&=\delta_{j=k}\Psi(v_{i})\Theta(v_{i}^{*}xv_{j})\Theta(v_{j}^{*}v_{j})\Theta(v_{j}^{*}yv_{l})\Psi(v_{l})^{*}\\
			&=\delta_{j=k}\Psi(v_{i})\Theta(v_{i}^{*}xv_{j}v_{j}^{*}v_{j}v_{j}^{*}yv_{l})\Psi(v_{l})^{*}\\
			&=\delta_{j=k}\Psi(v_{i})\Theta(v_{i}^{*}xv_{j}v_{j}^{*}yv_{l})\Psi(v_{l})^{*}.
		\end{align*}
		From here, it is direct to show that $\widetilde{\Theta}(xy)=\widetilde{\Theta}(x)\widetilde{\Theta}(y)$ for all $x,y\in M$. It is also direct to show that $\widetilde{\Theta}$ preserves adjoints. Finally, for all $x\in M$:
		\[\tau_{\omega}(\widetilde{\Theta}(x))=\sum_{i,j}\tau_{\omega}(\Theta(v_{i}^{*}xv_{j})\Psi(v_{j}^{*}v_{i}))=\sum_{i}\tau_{\omega}(\Theta(v_{i}^{*}xv_{i}v_{i}^{*}v_{i}))=\sum_{i}\tau(xv_{i}v_{i}^{*})=\tau(x).\]
		This proves the claim.
		
		By (\ref{eqn:compression splitting}) and the fact that $N\cap R$ is diffuse, we have that
		\[
		\underline{h}(M) = \underline{h}(N\vee R:M) = \underline{h}(N:M) = \underline{h}(N),
		\]
		the last step following by the claim and Lemma \ref{lem:MEP} (\ref{item:give yourself more room}). By Lemma \ref{lemm: h direct sum},
		\[\underline{h}(M)=\underline{h}(N)\geq t^{2}\underline{h}(pMp).
		\]
		This completes the proof of Step 2 and hence also the proof for $\underline{h}$ when $t\in (0,1)$. The case $t=1$ is trivial, whereas if $t>1$, then $\frac{1}{t}\in (0,1)$ and so
		\[
		\underline{h}(M)=\underline{h}((M^{t})^{\frac{1}{t}})=t^{2}\underline{h}(M^{t}),
		\]
		For the case of $h$, it was already shown in \cite[Proposition A.13(ii)]{Hayes2018} that $h(M^{t})\leq \frac{1}{t^{2}}h(M)$ for $t \in (0,1)$, and the proof of the opposite inequality proceeds in a similar manner to Step 2 above.
	\end{proof}
	
	With Lemma \ref{lemm: h direct sum} on direct sums and Proposition \ref{prop: amplification 2} on amplifications in hand, we are ready to finish the proof of Proposition \ref{prop: dichotomy} showing that strong $1$-boundedness of all tracial von Neumann algebras with Property (T) is equivalent to $h(M) \leq 0$ for all $\mathrm{II}_1$ factors with Property (T).
	
	\begin{proof}[Proof of Proposition \ref{prop: dichotomy}]
		(iii) $\implies$ (ii).  Assume that every $\mathrm{II}_1$ Property (T) factor has nonpositive $1$-bounded entropy.  Let $M$ be a tracial von Neumann algebra with Property (T), and we will show that $h(M) \leq 0$.
		Decomposing the center of $M$ into diffuse and atomic parts, we see that
		there is a countable index set $J$ (potentially empty) so that
		\[M=M_{0}\oplus \bigoplus_{j\in J}M_{j},\]
		where $M_{0}$ either has diffuse center or is $\{0\}$, and each $M_{j}$ is a factor. Since $M_{0}$ either has diffuse center or is $\{0\}$, we know $h(M_{0})\leq 0$. Since $M$ has Property (T), each $M_{j}$ has Property (T) by \cite[Proposition 4.7.2]{PopaL2Betti}.  Thus, by \eqref{eq: direct sum upper bound}, we have $h(M) \leq 0$.
		
		(ii) $\implies$ (i).  If every tracial von Neumann algebra with Property (T) satisfies $h(M) \leq 0$, then it is strongly $1$-bounded since strong $1$-boundedness is equivalent to $h(M) < \infty$ by \cite[Proposition A.16]{Hayes2018}.
		
		(i) $\implies$ (iii).  Proceeding by contraposition, assume that $N$ is a Property (T) factor with $h(N) > 0$, and we will show that there is a tracial von Neumann algebra $(M,\tau)$ with Property (T) such that $h(M,\tau) = \infty$.  Let $(N^{4^{-k}},\tau_k)$ be the compression of $M$, and let
		\[
		(M,\tau) = \bigoplus_{k \in \bN} 2^{-k} (N^{4^{-k}},\tau_k).
		\]
		By \cite[Proposition 4.7.1]{PopaL2Betti} we know that $M$ has Property (T).  By Lemma \ref{lemm: h direct sum} and Proposition \ref{prop: amplification 2}, for each $j \in \bN$, we have
		\[
		h(M,\tau) \geq 4^{-j} h(N^{4^{-j}}) + \sum_{k \neq j} 4^{-k} \underline{h}(N^{4^{-j}}) = 4^j h(N) + \sum_{k \neq j} 4^k \underline{h}(N).
		\]
		Since $h(N)>0$, we know that $N$ is Connes-embeddable, and thus $\underline{h}(N)\geq 0$. So
		\[
		h(M)\geq 4^j h(N) \textnormal{ for all $j\in \bN$.}
		\]
		Letting $j\to\infty$ we see that $h(M)=\infty$, i.e. $M$ is not strongly $1$-bounded.
		
		Finally, we show that for each finite von Neumann algebra $M$ with Property (T), there exists a faithful normal tracial state $\tau$ such that $(M,\tau)$ is strongly $1$-bounded.  As in (iii) $\implies$ (ii), write $M = \bigoplus_{j=0}^{\infty} M_j$ such that $M_0$ is zero or has diffuse center, and $M_j$ is a factor for $j \geq 1$.
		Let $\tau_j$ be the unique tracial state on $M_j$.  Since $h(M_0) \leq 0$ and since $h(M_j) < \infty$ for $j \geq 1$ by Theorem \ref{thm: Property T}, we may choose nonnegative constants $(\lambda_j)_{j=0}^{\infty}$ such that $\sum_{j=0}^{\infty} \lambda_j = 1$, $\sum_{j \in \bN_0} \lambda_j^2 h(M_j) < \infty$, and $\lambda_{j}>0$ if and only if $M_{j}\ne 0$.  Let $\tau$ be the faithful normal tracial state on $M$ given by $\bigoplus_{j=0}^{\infty} \lambda_j \tau_j$.  It follows by \eqref{eq: direct sum upper bound} that $h(M,\tau) < \infty$.
	\end{proof}

	
	\subsection{Direct sums and free entropy dimension}\label{sec:FED d sums}
	
	In this section, we show that Theorem \ref{thm: Property T} implies Jung and Shlyakhtenko's result that a Property (T) tracial von Neumann algebra has microstates free entropy dimension at most $1$ with respect to every finite generating tuple \cite{JungS}.  In fact, at the end of the section, we also sketch how to generalize the argument to show that $\delta_{0}(x)\leq 1$ for any infinite generating tuple $x$.  First, we recall the definition of Voiculescu's microstates free entropy dimension.
	
	
	\begin{defn}
		Let $(M,\tau)$ be a tracial von Neumann algebra, $x\in M^{d}_{\sa}$. Fix $R>\|x\|_{\infty}$. For $\varepsilon>0$, and a weak$^{*}$-neighborhood $\mathcal{O}$ of $\ell_{x}$, we set
		\[\delta_{R,\varepsilon}(\mathcal{O})=\limsup_{k\to\infty}\frac{\log K_{\varepsilon}(\Gamma^{(k)}_{R}(\mathcal{O}),\|\cdot\|_{2})}{k^{2}|\log(\varepsilon)|},\]
		\[\delta_{R,\varepsilon}(x)=\inf_{\mathcal{O}}\delta_{\varepsilon}(\mathcal{O}),\]
		where the infimum is over all weak$^{*}$-neighborhoods $\mathcal{O}$ of $\ell_{x}$. We then set
		\[\delta_{0}(x)=\limsup_{\varepsilon\to 0}\delta_{R,\varepsilon}(x).\]
		We call $\delta_{0}(x)$ the \emph{microstates free entropy dimension of $x$}.
	\end{defn}
	By standard methods, $\delta_{0}(x)$ does not depend upon the choice of $R$ and this justifies dropping it from the notation.
	This is not the original definition in \cite{Voiculescu1996}, however by \cite[Corollary 2.4]{JungLemma} they are the same.  The following lemma, based on previous work of Jung, describes the behavior free entropy dimension under direct sums.
	
	\begin{lemm}\label{lemm:approximating MFED by a large central corner}
		Let $(M,\tau)$ be a tracial von Neumann algebra, $d\in \bN$, and $x\in M^{d}_{\sa}$. Let $J$ be a countable set and $(z_{j})_{j\in J}$ be central projections in $M$ with $\sum_{j\in J}z_{j}=1$.
		For $j\in J$, let $x_{z_{j}}$ be $xz_{j}$ regarded as an element of $Mz_{j}$.  Then
		\[
		\delta_{0}(x)-1\leq \sum_{j\in J}\tau(z_{j})^{2}(\delta_{0}(x_{z_{j}})-1).
		\]
	\end{lemm}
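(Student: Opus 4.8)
The plan is to reduce to the case of a finite index set, to adjoin the central projections to the generating tuple, and then to bound the covering numbers of the microstate space of the enlarged tuple by decomposing each microstate into an ``almost orthogonal decomposition of $\mathbb{C}^n$'' part, whose count is controlled by Szarek's Grassmannian estimate (iterate the bound behind Lemma~\ref{lemm: Szarek}), together with ``block'' parts that are microstates for the $x_{z_j}$. If $M$ is not Connes-embeddable then neither is some $Mz_j$ and both sides are $-\infty$, so I assume $M$ is Connes-embeddable, whence $\delta_0(x_{z_j})\in[0,d]$ for all $j$. Each $z_j$ lies in the center of $\mathrm{W}^*(x)$, so we may take $\mathrm{W}^*(x)=M$ (the case occurring in the applications); then, writing $\hat{x}=(x,z_1,\dots,z_r)$ in the finite case, $\delta_0(x)=\delta_0(\hat{x})$ since adjoining elements of $\mathrm{W}^*(x)$ does not change the microstates free entropy dimension. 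For a countably infinite $J$, apply the finite case to $\{z_1,\dots,z_r,z_{>r}\}$ with $z_{>r}=\sum_{j>r}z_j$; since $\tau(z_{>r})\to 0$ as $r\to\infty$, $\delta_0(x_{z_{>r}})-1$ stays bounded, and $\sum_j\tau(z_j)^2|\delta_0(x_{z_j})-1|<\infty$ because $\sum_j\tau(z_j)^2<\infty$, so letting $r\to\infty$ deduces the infinite case from the finite one.

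Now fix $J=\{1,\dots,r\}$ and $R>\|x\|_\infty$. Given a small weak\textsuperscript{*}-neighborhood $\hat{\mathcal{O}}$ of $\ell_{\hat{x}}$ and $(X,Z_1,\dots,Z_r)\in\Gamma_R^{(n)}(\hat{\mathcal{O}})$, standard perturbation arguments produce an honest orthogonal decomposition $P=(P_1,\dots,P_r)$ of $\mathbb{C}^n$, with prescribed ranks $n_j$ satisfying $n_j/n\to\tau(z_j)$, such that $\|Z_j-P_j\|_2$ is small and -- crucially, because $[x_l,z_j]=0$ in $M$ -- $\|[X_l,P_j]\|_2$ is small, where ``small'' is controlled by $\hat{\mathcal{O}}$ and is independent of $\varepsilon$. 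Cover the space of all such decompositions at scale $\varepsilon$ by at most $(C/\varepsilon)^{n^2-\sum_j n_j^2}$ elements, by iterating the Grassmannian estimate of \cite{Szarek}. Taking a net element $P^{\mathrm{net}}$ within $\varepsilon$ of $P$, set $X''=\sum_j P^{\mathrm{net}}_jXP^{\mathrm{net}}_j$, so $\|X-X''\|_2=O(\varepsilon)$. The key observation is that the block $V_j:=P^{\mathrm{net}}_jXP^{\mathrm{net}}_j$, viewed in $\mathbb{M}_{n_j}(\mathbb{C})$ via $P^{\mathrm{net}}_j$, lies within $O(\varepsilon)$ in $\|\cdot\|_2$ of a unitary conjugate of $P_jXP_j$ (the block relative to the true decomposition), and the latter is a genuine microstate for $x_{z_j}$ lying in a neighborhood $\mathcal{O}_j$ of $\ell_{x_{z_j}}$ whose diameter is governed by $\hat{\mathcal{O}}$, not by $\varepsilon$. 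Hence $V_j$ lies in the $O(\varepsilon)$-$\|\cdot\|_2$-neighborhood of $\Gamma_R^{(n_j)}(\mathcal{O}_j)$, which any $\varepsilon$-cover of $\Gamma_R^{(n_j)}(\mathcal{O}_j)$ turns into an $O(\varepsilon)$-cover. Assembling the pieces (and appending $P^{\mathrm{net}}_j$, which is within $O(\varepsilon)$ of $Z_j$, to recover the $Z$-coordinates) yields, for a suitable constant $C$,
\[
K_{C\varepsilon}\!\big(\Gamma_R^{(n)}(\hat{\mathcal{O}}),\|\cdot\|_2\big)\ \le\ \Big(\frac{C}{\varepsilon}\Big)^{\,n^2-\sum_j n_j^2}\ \prod_{j=1}^r K_\varepsilon\!\big(\Gamma_R^{(n_j)}(\mathcal{O}_j),\|\cdot\|_2\big).
\]
Applying $\tfrac1{n^2}\log(\cdot)$ and $\limsup_{n\to\infty}$ (using $n_j/n\to\tau(z_j)$), then $\inf$ over $\hat{\mathcal{O}}$ (which, since the diameters of the $\mathcal{O}_j$ are controlled by $\hat{\mathcal{O}}$, drives them to $\ell_{x_{z_j}}$ and hence replaces $\delta_{R,\varepsilon}(\mathcal{O}_j)$ by $\delta_{R,\varepsilon}(x_{z_j})$), and finally $\limsup_{\varepsilon\to 0}$ (so that $\log(C/\varepsilon)/|\log(C\varepsilon)|\to 1$ and $|\log\varepsilon|/|\log(C\varepsilon)|\to 1$), we obtain
\[
\delta_0(\hat{x})\ \le\ \Big(1-\sum_j\tau(z_j)^2\Big)+\sum_j\tau(z_j)^2\,\delta_0(x_{z_j}),
\]
which is the asserted inequality after rearranging and using $\delta_0(x)=\delta_0(\hat{x})$.

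The main obstacle is obtaining the displayed covering estimate with the \emph{sharp} constants. One is forced to take the net of orthogonal decompositions at scale comparable to $\varepsilon$, so that the Grassmannian exponent $n^2-\sum_j n_j^2$ enters with coefficient exactly $1$; yet one must still arrange that the resulting blocks require only an $\varepsilon$-cover of an \emph{arbitrarily small} microstate space for $x_{z_j}$, for otherwise the block term would degrade to a quantity strictly larger than $\sum_j\tau(z_j)^2\delta_0(x_{z_j})$ (this already happens for $M=\mathbb{C}^2$, where the block microstate spaces must be taken tiny in order to contribute $0$). The observation that the blocks lie in an $O(\varepsilon)$-neighborhood in $\|\cdot\|_2$ -- as opposed to a weak\textsuperscript{*}-neighborhood, which would only give radius $O(\sqrt{\varepsilon})$ -- of a small microstate space is exactly what reconciles these two requirements. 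The remaining ingredients (the perturbation lemmas: almost-orthogonal almost-projections lie near honest decompositions, and corners of near-equal rank are conjugate by a near-identity unitary; and the invariance of $\delta_0$ under adjoining the $z_j$) are routine, but the dependencies among $\hat{\mathcal{O}}$, $\varepsilon$, and $n$ must be introduced in the right order.
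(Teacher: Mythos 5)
Your proposal takes a genuinely different route from the paper's. The paper's proof of this lemma is quite short: it reduces by induction to the case $J=\{1,2\}$, adjoins the single central projection $z$ to the tuple, and then invokes three prior results as black boxes, namely \cite[Lemma 3.2, Corollary 4.3]{JungHyperFiniteIneq} for the factorization
\[
\delta_0(x,z)=\delta_0(z)+\limsup_{\varepsilon\to 0}\frac{h_{R,\varepsilon}(x\mid(P_k))}{\log(1/\varepsilon)},
\]
\cite[Corollary 5.8]{JungRegularity} for $\delta_0(z)=2\tau(z)(1-\tau(z))$, and the proof of \cite[Proposition A.13(i)]{Hayes2018} for the block subadditivity $h_{R,6\varepsilon}(x\mid (P_k))\le\tau(z)^2 h_{R,\varepsilon}(x_z)+(1-\tau(z))^2 h_{R,\varepsilon}(x_{1-z})$. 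In contrast, you prove the needed covering estimate directly: you adjoin all the $z_j$ at once, cover the ordered flag of orthogonal decompositions at scale $\varepsilon$ (correctly obtaining the exponent $n^2-\sum_j n_j^2$, which is exactly the hidden content of the $\delta_0(z)$ formula and of Szarek's bound), and cover each block by an $\varepsilon$-cover of a small microstate space for $x_{z_j}$. The constant comes out right: $(n^2-\sum_j n_j^2)/n^2\to 1-\sum_j\tau(z_j)^2$ recovers the complement term, and the blocks contribute $\sum_j\tau(z_j)^2\delta_0(x_{z_j})$. Your treatment of infinite $J$ by grouping the tail into $z_{>r}$ and bounding $\delta_0(x_{z_{>r}})$ by $d$ is the same as the paper's.

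Two caveats. First, your argument is considerably longer and, as you yourself flag, leaves the perturbation lemmas and the order of choosing $\hat{\mathcal{O}}$, then $\varepsilon$, then $n$ to the reader; making these precise (in particular the perturbation of $(Z_1,\dots,Z_r)$ to an honest orthogonal decomposition of prescribed ranks summing to $n$, the fact that projections of equal rank that are $\varepsilon$-close in $\|\cdot\|_\infty$ are conjugate by a unitary within $O(\varepsilon)$ in $\|\cdot\|_2$, and the normalization bookkeeping between $\tr_n$ and $\tr_{n_j}$) is essentially reproving the results the paper cites from Jung and Hayes. Your observation that the blocks lie within $O(\varepsilon)$ in $\|\cdot\|_2$ (not merely in a weak$^*$ neighborhood) of a small microstate space is exactly the crux — the same mechanism underlies the cited block-subadditivity estimate. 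Second, you explicitly restrict to $\mathrm{W}^*(x)=M$; the lemma as stated only assumes $z_j\in Z(M)$, and the paper's cited identity $\delta_0(x)=\delta_0(x,z)$ also implicitly requires $z\in \mathrm{W}^*(x)$, so this is at worst an assumption that both proofs make and that holds in the applications (Corollaries \ref{cor:direct sum of S1B things} and \ref{cor:JS reproof}).

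What each approach buys: the paper's proof is short, modular, and leverages existing machinery; yours is self-contained and makes transparent where the exponent $1-\sum_j\tau(z_j)^2$ actually comes from (the flag variety), which is pedagogically valuable but demands a substantial amount of routine-but-finicky work to be fully rigorous.
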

	
	\begin{proof}
		Fix $R>\|x\|_{\infty}$. We first handle the case where $J$ is finite. By induction, to handle the case of finite $J$ it suffices to handle the case where $J=\{1,2\}$. In this case we use $z$ for $z_{1}$.
		Let $P_{n}\in \bM_{n}(\bC)_{\sa}$ be microstates for $z$ such that each $P_{n}$ is an orthogonal projection.
		By \cite[Lemma 3.2 and Corollary 4.3]{JungHyperFiniteIneq}, we have
		\[\delta_{0}(x)=\delta_{0}(x,z)=\delta_{0}(z)+\limsup_{\varepsilon\to 0}\frac{h_{R,\varepsilon}(x|(P_{k}))}{\log(1/\varepsilon)}=2\tau(z)(1-\tau(z))+\limsup_{\varepsilon\to 0}\frac{h_{R,\varepsilon}(x|(P_{k}))}{\log(1/\varepsilon)},\]
		where in the last step we use \cite[Corollary 5.8]{JungRegularity}. It follows from the proof of \cite[Proposition A.13(i)]{Hayes2018} that
		\[h_{R,6\varepsilon}(x|(P_{k})_{k})\leq \tau(z)^{2}h_{R,\varepsilon}(x_{z})+(1-\tau(z))^{2}h_{R,\varepsilon}(x_{1-z}), \textnormal{ for all sufficiently small $\varepsilon$.}\]
		Dividing by $\log(1/\varepsilon)$ and letting $\varepsilon\to 0$ we obtain that
		\[\delta_{0}(x)\leq 2 \tau(z)(1-\tau(z))+\tau(z)^{2}\delta_{0}(x_{z})+(1-\tau(z))^{2}\delta_{0}(x_{1-z}),\]
		and by direct computation this is equivalent to the desired inequality.
		
		We now handle the case of infinite $J$. We may, and will, assume that $J=\bN$. For $n\in \bN$,  let $z_{\leq n}=\sum_{j=1}^{n}z_{j}$. Then, by the case of finite $J$:
		\[\delta_{0}(x)-1\leq (1-\tau(z_{\leq n}))^{2}(\delta_{0}(x_{1-z_{\leq n}})-1)+\sum_{j=1}^{n}\tau(z_{j})^{2}\delta_{0}(x_{z_{j}}).\]
		We have that $\delta_{0}(x_{1-z_{\leq n}})\leq d$, and thus
		\[\delta_{0}(x)-1\leq (1-\tau(z_{\leq n}))^{2}(d-1)+\sum_{j=1}^{n}\tau(z_{j})^{2}\delta_{0}(x_{z_{j}}).\]
		The proof is thus completed by letting $n\to \infty.$

	\end{proof}
	
	Since being strongly $1$-bounded implies microstates free entropy dimension at most $1$ with respect to any set of generators, the preceding lemma automatically implies the following.
	
	\begin{cor}\label{cor:direct sum of S1B things}
		Let $J$ be a countable set and $((M_{j},\tau_{j}))_{j\in J}$ tracial von Neumann algebras. Suppose that $(\lambda_{j})_{j\in J}\in (0,1]^{J}$ with $\sum_{j\in J}\lambda_{j}=1$.
		Let $(M,\tau)=\bigoplus_{j}\lambda_{j}(M_{j},\tau_{j}).$
		Assume each $(M_{j},\tau_{j})$ is strongly $1$-bounded. Then for any $x\in M_{\sa}^{d}$ with $W^{*}(x)=M$ we have $\delta_{0}(x)\leq 1$.
	\end{cor}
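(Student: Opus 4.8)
The plan is to deduce this directly from Lemma \ref{lemm:approximating MFED by a large central corner}, applied to the central projections coming from the direct-sum decomposition. For each $j \in J$ let $z_j \in M$ denote the identity of the summand $M_j$, viewed as a central projection of $M = \bigoplus_{j} \lambda_j(M_j,\tau_j)$; then the $z_j$ are central, $\sum_{j \in J} z_j = 1$, and $\tau(z_j) = \lambda_j$. It is worth stressing that one cannot argue merely by checking that $M$ itself is strongly $1$-bounded: the bound \eqref{eq: direct sum upper bound} gives only $h(M) \leq \sum_j \lambda_j^2 h(M_j)$, and this series can diverge even though every $h(M_j) < \infty$ (this is precisely the mechanism used in the proof of Proposition \ref{prop: dichotomy}). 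The point of passing to $\delta_0$ is that the analogous inequality furnished by Lemma \ref{lemm:approximating MFED by a large central corner},
\[
\delta_0(x) - 1 \leq \sum_{j \in J} \lambda_j^2 \bigl( \delta_0(x_{z_j}) - 1 \bigr)
\]
(with $x_{z_j} = x z_j \in (Mz_j)_{\sa}^d$), has a right-hand side that is automatically nonpositive once each $\delta_0(x_{z_j}) \leq 1$.

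So the only thing to verify is that $\delta_0(x_{z_j}) \leq 1$ for every $j$. Since $z_j$ is central and $W^*(x) = M$, the tuple $x_{z_j}$ generates $Mz_j$: the map $y \mapsto yz_j$ is a normal, surjective, unital $*$-homomorphism $M \to Mz_j$, and such a homomorphism carries $W^*(x)$ onto $W^*(x_{z_j})$, so $W^*(x_{z_j}) = Mz_j$. After normalizing the trace, $(Mz_j, \lambda_j^{-1}\tau|_{Mz_j})$ is canonically the strongly $1$-bounded algebra $(M_j,\tau_j)$; thus $x_{z_j}$ is a finite generating tuple of a strongly $1$-bounded tracial von Neumann algebra. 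As recalled in the introduction (following Jung \cite{Jung2007}), strong $1$-boundedness forces microstates free entropy dimension at most $1$ with respect to every finite generating set, and since $\delta_0(x_{z_j})$ depends only on the law of $x_{z_j}$ we conclude $\delta_0(x_{z_j}) \leq 1$.

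Combining the two observations, the right-hand side of the displayed inequality is a sum of nonpositive terms, hence $\leq 0$, and therefore $\delta_0(x) \leq 1$. I do not expect a genuine obstacle here: essentially all of the content is in Lemma \ref{lemm:approximating MFED by a large central corner}, and what remains is the routine identification $W^*(x_{z_j}) = Mz_j \cong M_j$ — needed so that strong $1$-boundedness of $M_j$ bears on the specific tuple $x_{z_j}$ rather than merely on some subalgebra it might generate — together with the standard implication that strong $1$-boundedness yields $\delta_0 \leq 1$ for all generators.
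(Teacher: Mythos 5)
Your proof is correct and is essentially the paper's own argument made explicit: the paper disposes of this corollary with the single remark that "being strongly $1$-bounded implies microstates free entropy dimension at most $1$ with respect to any set of generators," so Lemma \ref{lemm:approximating MFED by a large central corner} applied to the central projections $z_j$ gives the result immediately. Your write-up fills in the (correct) verification that $x_{z_j}$ generates $Mz_j \cong M_j$ — which is genuinely needed so that strong $1$-boundedness of $M_j$ controls $\delta_0(x_{z_j})$ — and the cautionary aside explaining why one cannot simply invoke $h(M) < \infty$ is accurate and consistent with the mechanism behind Proposition \ref{prop: dichotomy}.
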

	
	
	We now recover the results of Jung-Shlyakhtenko.
	\begin{cor}\label{cor:JS reproof}
		Let $(M,\tau)$ be a Property (T) von Neumann algebra which is finitely generated. Suppose that $x\in M_{\sa}^d$ satisfies $W^{*}(x)=M$. Then $\delta_{0}(x)\leq 1$.
	\end{cor}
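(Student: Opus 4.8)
The plan is to reduce to Theorem \ref{thm: Property T} using the central decomposition of $M$, and then invoke Corollary \ref{cor:direct sum of S1B things}. First I would decompose the center $\mathcal{Z}(M)$ into its diffuse and atomic parts, writing
\[
M = M_0 \oplus \bigoplus_{j \in J} M_j
\]
for a countable (possibly empty) index set $J$, where $M_0$ is either $\{0\}$ or has diffuse center and each $M_j$ is a factor; the weights $\lambda_j$ in this decomposition are positive because $\tau$ is faithful, and they sum to $1$.

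Next I would verify that each summand is strongly $1$-bounded. For $M_0$, having diffuse center already forces $h(M_0) \leq 0$ (one of the listed permanence properties of $h$ in Section \ref{sec:S1B background}), hence $M_0$ is strongly $1$-bounded. For each factor $M_j$, Property (T) passes from $M$ to the direct summand by \cite[Proposition 4.7.2]{PopaL2Betti}, and since $M_j$ is a factor it has finite-dimensional center, so Theorem \ref{thm: Property T} applies and shows $M_j$ is strongly $1$-bounded.

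Finally, since $M = \mathrm{W}^*(x)$ is the countable direct sum of the strongly $1$-bounded algebras $M_0$ (if nonzero) and the $M_j$, Corollary \ref{cor:direct sum of S1B things} gives $\delta_0(x) \leq 1$ for the generating tuple $x$, which is the assertion. (In the degenerate case where $M$ is itself a factor one can skip the decomposition entirely and apply Theorem \ref{thm: Property T} directly, using that strong $1$-boundedness implies $\delta_0 \leq 1$ with respect to every generating tuple.)

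I do not expect a genuine obstacle: the real content lies in Theorem \ref{thm: Property T} and in Lemma \ref{lemm:approximating MFED by a large central corner} (through its corollary). The only point needing a little care is the diffuse part of the center, since Theorem \ref{thm: Property T} as stated covers the finite-dimensional-center case; one handles this by separately recording that diffuse center implies nonpositive $1$-bounded entropy. Alternatively, one could feed the diffuse summand directly into Lemma \ref{lemm:approximating MFED by a large central corner}, which only requires the relevant corners $x_{z_j}$ to have free entropy dimension at most $1$, a property the diffuse-center corner enjoys automatically.
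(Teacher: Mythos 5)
Your argument is correct and follows essentially the same route as the paper: decompose $M = M_0 \oplus \bigoplus_{j \in J} M_j$ via the center, observe $h(M_0) \leq 0$, apply Theorem \ref{thm: Property T} to the factors $M_j$, and invoke Corollary \ref{cor:direct sum of S1B things}. The only (inessential) difference is that the paper splits into the cases $J$ finite (where it concludes strong $1$-boundedness of $M$ directly from the direct-sum inequality for $h$) and $J$ infinite (where it invokes Corollary \ref{cor:direct sum of S1B things}), whereas you apply Corollary \ref{cor:direct sum of S1B things} uniformly, which is fine since that corollary is stated for any countable $J$.
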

	
	\begin{proof}
		We may write $M=M_{0}\oplus \bigoplus_{j\in J}M_{j}$ where $J$ is a (potentially empty) countable set, each $M_{j}$ is a Property (T) factor, and $M_{0}$ is either $\{0\}$ or an algebra with diffuse center. Since $M_{0}$ either has diffuse center or is $\{0\}$, we have $h(M_{0})\leq 0$. If $J$ is finite, we see that $M$ is strongly $1$-bounded by Theorem \ref{thm: Property T}, and by how $1$-bounded entropy behaves under direct sums. If $J$ is infinite, then we may apply Corollary \ref{cor:direct sum of S1B things} to complete the proof.
		
	\end{proof}
	
	We remark that if one works carefully with free entropy dimension in the presence in Lemma \ref{lemm:approximating MFED by a large central corner}, then a proof of the Corollary \ref{cor:direct sum of S1B things} can be given for infinite tuples as well. Using this, one can show that if $M$ is a Property (T) algebra, then $\delta_{0}(x)\leq 1$ for every tuple which generates $M$, even if $x$ is infinite. We will not give the full proof here, but sketch the details for the interested reader.
	
	Given a tracial von Neumann algebra $(M,\tau)$ and finite tuples $x\in M_{\sa}^{d},y\in M_{\sa}^{s}$ with $d,s\in \bN$, an $R>\max(\|x\|_{\infty},\|y\|_{\infty})$, a weak$^{*}$-neighborhood $\mathcal{O}$ of $\ell_{x,y}$, and $\varepsilon>0$ we set
	\[\delta_{R,\varepsilon}(x:\mathcal{O})=\limsup_{n\to\infty}\frac{K_{\varepsilon}(\Gamma_{R}^{(n)}(x:\mathcal{O}),\|\cdot\|_{2})}{\log(1/\varepsilon)},\]\[\delta_{R,\varepsilon}(x:y)=\inf_{\mathcal{O}}\delta_{R,\varepsilon}(x:\mathcal{O}),\]
	where the infimum is over all weak$^{*}$-neighborhoods $\mathcal{O}$ of $\ell_{x,y}$. We then define the \emph{free microstates free entropy dimension of $x$ in the presence of $y$} by
	\[\delta_{0}(x:y)=\limsup_{\varepsilon\to 0}\delta_{R,\varepsilon}(x:y),\]
	by standard methods we can show that this is independent of $R$, and this justifies dropping $R$ from the notation. One can show that if $a\in M_{\sa}^{t}$ for some $t\in \bN$ and $W^{*}(x,y)=W^{*}(x,a)$, then $\delta_{0}(x:y)=\delta_{0}(x:a)$. So we set
	\[\delta_{0}(x:W^{*}(x,y))=\delta_{0}(x:y),\]
	and this does not depend upon the choice of $y$.
	
	Now suppose that $x=(x_{j})_{j\in J}\in M_{\sa}^{J}$ for some set $J$. For a finite $F\subseteq J$, let $x_{F}\in M_{\sa}^{F}$ be given by $x_{F}=(x_{j})_{j\in F}$. We then set
	\[\delta_{0}(x_{F}:M)=\inf_{Q}\delta_{0}(x_{F}:Q),\]
	\[\delta_{0}(x:M)=\sup_{F}\delta_{0}(x_{F}:M),\]
	where the infimum is over all finitely generated $Q\leq M$ with $x_{F}\in Q^{F}_{\sa}$, and the supremum is over all finite subsets $F$ of $J$. We set $\delta_{0}(x)=\delta_{0}(x:W^{*}(x))$.
	
	To generalize Corollary \ref{cor:direct sum of S1B things} to infinite tuples, one first proves a modification of Lemma \ref{lemm:approximating MFED by a large central corner}. Namely if $x$ is a self-adjoint tuple in $M$ and $(z_{j})_{j\in J}$ are projections in $Z(M)\cap W^{*}(x)$, then
	\begin{equation}\label{eqn:ds ineq for infinite tuples}
		\delta_{0}(x:M)-1\leq \sum_{j}\tau(z_{j})^{2}(\delta_{0}(x_{z_{j}}:Mz_{j})-1)
	\end{equation}
	To prove (\ref{eqn:ds ineq for infinite tuples}) one first handles the case $x$ is a finite tuple, and $J=\{1,2\}$. The proof of (\ref{eqn:ds ineq for infinite tuples}) in this case is a minor modification of Lemma \ref{lemm:approximating MFED by a large central corner}. Namely, one  modifies the proof of \cite[Lemma 3.2 and Corollary 4.3]{JungHyperFiniteIneq}   to show that if $z\in Z(M)\cap W^{*}(x)$ is a central projection, then
	\[\delta_{0}(x,z:M)=2\tau(z)(1-\tau(z))+\limsup_{\varepsilon\to 0}\frac{h_{R,\varepsilon}(x:M)}{\log(1/\varepsilon)}
	.\]
	One then modifies the proof of \cite[Corollary 5.8]{JungRegularity} to say that $\delta_{0}(x:M)=\delta_{0}(x,z:M)$. After changing the results in \cite{JungHyperFiniteIneq, JungRegularity} to work for free entropy dimension in the presence, the proof of (\ref{eqn:ds ineq for infinite tuples}) in the case that $x$ is a finite tuple and $J=\{1,2\}$ proceeds exactly as in Lemma \ref{lemm:approximating MFED by a large central corner}. The proof of the general case of (\ref{eqn:ds ineq for infinite tuples}) from this special case also follows precisely as in Lemma \ref{lemm:approximating MFED by a large central corner}.
	
	The inequality (\ref{eqn:ds ineq for infinite tuples}) automatically shows that if $M$ is a direct sum of strongly $1$-bounded algebras and if $x$ is any self-adjoint tuple in $M$, then $\delta_{0}(x)\leq 1$, and from this one deduces a version of Corollary \ref{cor:JS reproof} where $x$ is any self-adjoint tuple (finite or not).

	\bibliographystyle{amsalpha}
	\bibliography{InnerAmen.bib}

\end{document}